\let\origsection=\section \def\section{\@ifstar{\origsection*}{\mysection}}
\def\mysection{\@startsection{section}{1}\z@{.7\linespacing\@plus\linespacing}{.5\linespacing}{\normalfont\scshape\centering\S}}
\renewcommand{\PrintDOI}[1]{\doi{#1}}
\numberwithin{equation}{section}
\numberwithin{figure}{section}
\let\polishlcross=\l
\def\l{\ifmmode\ell\else\polishlcross\fi}
\let\emptyset=\varnothing
\let\setminus=\smallsetminus
\def\moverlay{\mathpalette\mov@rlay}
\def\mov@rlay#1#2{\leavevmode\vtop{   \baselineskip\z@skip \lineskiplimit-\maxdimen
		\ialign{\hfil$\m@th#1##$\hfil\cr#2\crcr}}}
\newcommand{\charfusion}[3][\mathord]{
	#1{\ifx#1\mathop\vphantom{#2}\fi
		\mathpalette\mov@rlay{#2\cr#3}
	}
	\ifx#1\mathop\expandafter\displaylimits\fi}
\DeclareFontFamily{U}  {MnSymbolC}{}
\DeclareSymbolFont{MnSyC}         {U}  {MnSymbolC}{m}{n}
\DeclareFontShape{U}{MnSymbolC}{m}{n}{
	<-6>  MnSymbolC5
	<6-7>  MnSymbolC6
	<7-8>  MnSymbolC7
	<8-9>  MnSymbolC8
	<9-10> MnSymbolC9
	<10-12> MnSymbolC10
	<12->   MnSymbolC12}{}
\DeclareMathSymbol{\powerset}{\mathord}{MnSyC}{180}
\let\epsilon=\varepsilon
\let\rho=\varrho
\let\theta=\vartheta
\newcommand{\cP}{\mathcal{P}}
\newtheoremstyle{note}  {4pt}  {4pt}  {\sl}  {}  {\bfseries}  {.}  {.5em}          {}
\newtheoremstyle{introthms}  {3pt}  {3pt}  {\itshape}  {}  {\bfseries}  {.}  {.5em}          {\thmnote{#3}}
\newtheoremstyle{remark}  {2pt}  {2pt}  {\rm}  {}  {\bfseries}  {.}  {.3em}          {}
\theoremstyle{plain}
\newtheorem{theorem}{Theorem}[section]
\newtheorem{lemma}[theorem]{Lemma}
\newtheorem{example}[theorem]{Example}
\newtheorem{prop}[theorem]{Proposition}
\newtheorem{claim}[theorem]{Claim}
\theoremstyle{note}
\newtheorem{dfn}[theorem]{Definition}
\theoremstyle{remark}
\newcommand*\patchAmsMathEnvironmentForLineno[1]{
	\expandafter\let\csname old#1\expandafter\endcsname\csname #1\endcsname
	\expandafter\let\csname oldend#1\expandafter\endcsname\csname end#1\endcsname
	\renewenvironment{#1}
	{\linenomath\csname old#1\endcsname}
	{\csname oldend#1\endcsname\endlinenomath}}
\newcommand*\patchBothAmsMathEnvironmentsForLineno[1]{
	\patchAmsMathEnvironmentForLineno{#1}
	\patchAmsMathEnvironmentForLineno{#1*}}
\newtheoremstyle{case}{}{}{}{}{}{:}{ }{}
\theoremstyle{case}
\newcommand{\qedge}[7]{
	
	\ifx\relax#4\relax
	\def\qoffs{0pt}
	\else
	\def\qoffs{#4}
	\fi
	
	\def\qhedge{
		($#1+#3!\qoffs!-90:#2-#3$) --
		($#2+#1!\qoffs!-90:#3-#1$) --
		($#3+#2!\qoffs!-90:#1-#2$) -- cycle}

	\coordinate (12) at ($#1!\qoffs!90:#2$);
	\coordinate (13) at ($#1!\qoffs!-90:#3$);
	\coordinate (23) at ($#2!\qoffs!90:#3$);
	\coordinate (21) at ($#2!\qoffs!-90:#1$);
	\coordinate (31) at ($#3!\qoffs!90:#1$);
	\coordinate (32) at ($#3!\qoffs!-90:#2$);
	
	\def\nqhedge{
		(13) let \p1=($(13)-#1$), \p2=($(12)-#1$) in
		arc[start angle={atan2(\y1,\x1)}, delta angle={atan2(\y2,\x2)-atan2(\y1,\x1)-360*(atan2(\y2,\x2)-atan2(\y1,\x1)>0)}, x radius=\qoffs, y radius=\qoffs] --
		(21) let \p1=($(21)-#2$), \p2=($(23)-#2$) in
		arc[start angle={atan2(\y1,\x1)}, delta angle={atan2(\y2,\x2)-atan2(\y1,\x1)-360*(atan2(\y2,\x2)-atan2(\y1,\x1)>0)}, x radius=\qoffs, y radius=\qoffs] --
		(32) let \p1=($(32)-#3$), \p2=($(31)-#3$) in
		arc[start angle={atan2(\y1,\x1)}, delta angle={atan2(\y2,\x2)-atan2(\y1,\x1)-360*(atan2(\y2,\x2)-atan2(\y1,\x1)>0)}, x radius=\qoffs, y radius=\qoffs] --
		cycle}
	
	\ifx\relax#5\relax
	\def\qlwidth{1pt}
	\else
	\def\qlwidth{#5}
	\fi
	
	\ifx\relax#7\relax
	\fill \nqhedge;
	\else
	\fill[#7]\nqhedge;
	\fi
	
	\ifx\relax#6\relax
	\draw[line width=\qlwidth,rounded corners=\qoffs]\nqhedge;
	\else
	\draw[line width=\qlwidth,#6]\nqhedge;
	\fi
}
\newsavebox\vdegbox
\savebox\vdegbox{\tikz{
		\draw[black,fill=black] (90:1) circle (.35);
		\draw[black,line width=0.10cm] (210:1) circle (.30);
		\draw[black,line width=0.10cm] (330:1) circle (.30);
		\draw[opacity=0] (0:1.2) circle (0.1);
}}
\newsavebox\vvbox
\savebox\vvbox{\tikz{
		\draw[black,line width=0.10cm] (90:1) circle (.30);
		\draw[black,fill=black] (210:1) circle (.35);
		\draw[black,fill=black] (330:1) circle (.35);
		\draw[opacity=0] (0:1.2) circle (0.1);
}}
\newsavebox\pdegbox
\savebox\pdegbox{\tikz{
		\draw[black,line width=0.10cm] (90:1) circle (.30);
		\draw[black,fill=black] (210:1) circle (.35);
		\draw[black,fill=black] (330:1) circle (.35);
		\draw[black,line width=0.28cm ] (210:1) -- (330:1);
		\draw[opacity=0] (0:1.2) circle (0.1);
}}
\newsavebox\vvvbox
\savebox\vvvbox{\tikz{
		\draw[black,fill=black] (90:1) circle (.35);
		\draw[black,fill=black] (210:1) circle (.35);
		\draw[black,fill=black] (330:1) circle (.35);
		\draw[opacity=0] (0:1.2) circle (0.1);
}}
\newsavebox\evbox
\savebox\evbox{\tikz{
		\draw[black,fill=black] (90:1) circle (.35);
		\draw[black,fill=black] (210:1) circle (.35);
		\draw[black,fill=black] (330:1) circle (.35);
		\draw[black,line width=0.28cm ] (210:1) -- (330:1);
		\draw[opacity=0] (0:1.2) circle (0.1);
}}
\newsavebox\eebox
\savebox\eebox{\tikz{
		\draw[black,fill=black] (90:1) circle (.35);
		\draw[black,fill=black] (210:1) circle (.35);
		\draw[black,fill=black] (330:1) circle (.35);
		\draw[black,line width=0.28cm ] (90:1) -- (330:1);
		\draw[black,line width=0.28cm ] (90:1) -- (210:1);
		\draw[opacity=0] (0:1.2) circle (0.1);
}}
\newsavebox\eeebox
\savebox\eeebox{\tikz{
		\draw[black,fill=black] (90:1) circle (.35);
		\draw[black,fill=black] (210:1) circle (.35);
		\draw[black,fill=black] (330:1) circle (.35);
		\draw[black,line width=0.28cm ] (90:1) -- (330:1);
		\draw[black,line width=0.28cm ] (90:1) -- (210:1);
		\draw[black,line width=0.28cm ] (210:1) -- (330:1);
		\draw[opacity=0] (0:1.2) circle (0.1);
}}
\begin{document}

\title[A pair degree condition for Hamiltonian cycles in $3$-graphs]{A pair degree condition for Hamiltonian cycles in $3$-uniform hypergraphs}
\author[B.~Sch\"{u}lke]{Bjarne Sch\"{u}lke}
\address{Fachbereich Mathematik, Universit\"{a}t Hamburg, Hamburg, Germany}
\email{bjarne.schuelke@uni-hamburg.de}

\subjclass[2010]{Primary: 05C65. Secondary: 05C35}
\keywords{Hamiltonian cycles, hypergraphs}

\begin{abstract}
	We prove a new sufficient pair degree condition for tight Hamiltonian cycles in~$3$-uniform hypergraphs that (asymptotically) improves the best known pair degree condition due to R\"odl, Ruci\'nski, and Szemer\'edi. 
	For graphs, Chv\'atal characterised all those sequences  of integers for which every pointwise larger (or equal) degree sequence guarantees the existence of a Hamiltonian cycle. 
	A step towards Chv\'atal's theorem was taken by P\'osa, who improved on Dirac's tight minimum degree condition for Hamiltonian cycles by showing that a certain weaker condition on the degree sequence of a graph already yields a Hamiltonian cycle.
	
	In this work, we take a similar step towards a full characterisation of all pair degree matrices that ensure the existence of tight Hamiltonian cycles in~$3$-uniform hypergraphs by proving a~$3$-uniform analogue of P\'osa's result. In particular, our result strengthens the asymptotic version of the result by R\"odl, Ruci\'nski, and Szemer\'edi.
\end{abstract}

\maketitle

\section{Introduction}

The search for conditions ensuring the existence of Hamiltonian cycles in graphs has been one of the main themes in graph theory.
For graphs, several classic results exist, starting with the necessary condition by Dirac~\cite{Dirac} stating that every graph $G=(V,E)$ on at least~$3$ vertices and with minimum degree $\delta (G) \geq \vert V\vert /2$ contains a Hamiltonian cycle.
P\'osa~\cite{Posa} improved this result to a condition on the degree sequence: 
\begin{theorem}\label{Posa_theorem}
	Let $G=([n],E)$ be a graph on $n\geq 3$ vertices with degree sequence ${d(1) \leq \dots \leq d(n)}$. 
	If~$d(i) \geq i+1$ for all $i < (n-1)/2$ and if furthermore $d\left(\left\lceil n/2 \right\rceil\right) \geq \left\lceil n/2\right\rceil$ when~$n$ is odd, then~$G$ contains a Hamiltonian cycle. 
\end{theorem}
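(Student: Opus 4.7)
The plan is to argue by contradiction via a Chv\'atal-style maximality argument. Suppose $G$ satisfies the hypothesis but contains no Hamiltonian cycle; since adding edges only raises the sorted degree sequence pointwise, I may assume $G$ is edge-maximal without a Hamiltonian cycle. For any non-edge $uv$, the graph $G+uv$ then has a Hamiltonian cycle, so $G$ admits a Hamiltonian path $u=x_1,x_2,\dots,x_n=v$; the sets $S:=\{i:ux_{i+1}\in E\}$ and $T:=\{i:vx_{i}\in E\}$ are disjoint subsets of $\{1,\dots,n-1\}$, since a common index $i$ would close the Hamiltonian cycle $ux_{2}\cdots x_{i}vx_{n-1}\cdots x_{i+1}u$. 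Hence $d(u)+d(v)\leq n-1$ for every non-adjacent pair.

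Next I would pick a non-adjacent pair $u,v$ maximising $d(u)+d(v)$, with $d(u)\leq d(v)$, and set $h:=d(u)$, so that $h\leq(n-1)/2$. The maximality of $d(u)+d(v)$ forces every non-neighbour $w\ne v$ of $v$ to satisfy $d(w)\leq h$; since $v$ has at least $n-1-d(v)\geq h$ non-neighbours (using $d(v)\leq n-1-h$), this supplies at least $h$ vertices of degree at most $h$, and therefore $d(h)\leq h$. Dually, applying the key lemma to $u$ together with each of its $n-1-h$ non-neighbours, and counting $u$ itself (whose degree $h$ satisfies $h\leq n-1-h$), gives $n-h$ vertices of degree at most $n-1-h$, so $d(n-h)\leq n-1-h$.

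Two cases now finish the argument. If $h<(n-1)/2$, the hypothesis gives $d(h)\geq h+1$, contradicting $d(h)\leq h$. Otherwise $h=(n-1)/2$, which forces $n$ to be odd and $n-h=\lceil n/2\rceil$; the extra hypothesis then demands $d(\lceil n/2\rceil)\geq\lceil n/2\rceil=(n+1)/2$, while we derived $d(n-h)\leq(n-1)/2$, a contradiction. The main subtlety is choosing both $u$ and $v$ optimally so as to extract the \emph{two} complementary degree-sequence inequalities $d(h)\leq h$ and $d(n-h)\leq n-1-h$ from a single non-edge; once these are in hand, the two-part structure of P\'osa's hypothesis aligns with the two cases $h<(n-1)/2$ and $h=(n-1)/2$ precisely.
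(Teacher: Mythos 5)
The paper states P\'osa's theorem (Theorem~\ref{Posa_theorem}) only as classical background, citing \cite{Posa}, and does not prove it, so there is no proof in the paper to compare your argument against. Your proof is correct and is the standard modern argument via edge-maximality (equivalently, the Bondy--Chv\'atal closure): pass to a maximal counterexample; along a Hamiltonian $u$-$v$ path observe that $S=\{i:ux_{i+1}\in E\}$ and $T=\{i:vx_i\in E\}$ are disjoint subsets of $\{1,\dots,n-1\}$, giving $d(u)+d(v)\le n-1$ for every non-edge; then choose a non-edge maximising $d(u)+d(v)$, set $h=d(u)\le d(v)$, and extract both $d(h)\le h$ (from the non-neighbours of $v$) and $d(n-h)\le n-1-h$ (from the non-neighbours of $u$, plus $u$ itself). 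In effect this proves Chv\'atal's theorem, of which P\'osa's theorem is a corollary; your split into $h<(n-1)/2$ and $h=(n-1)/2$ corresponds exactly to the two clauses of P\'osa's hypothesis.

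One detail worth making explicit: you need $h\ge 1$ for $d(h)$ to be defined. This does hold, because an edge-maximal graph on $n\ge 3$ vertices with no Hamiltonian cycle has no isolated vertex: if $u$ were isolated then in $G+uw$ the vertex $u$ would have degree~$1$ and could not lie on a Hamiltonian cycle, so $G+uw$ would again fail to be Hamiltonian, contradicting maximality. With this noted, the argument is complete.
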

Finally, Chv\'atal \cite{Chvatal} achieved an even stronger result: A graph~${G=([n],E)}$ on~${n\geq 3}$ vertices with degree sequence $d(1) \leq \dots \leq d(n)$ contains a Hamiltonian cycle if for all~${i < \frac{n}{2}}$ we have: $d(i) \leq i \Rightarrow d(n-i) \geq n-i$.
On the other hand, for any sequence~${a_1\leq\dots\leq a_n<n}$ not satisfying this condition there exists a graph on vertex set~$[n]$ with~$a_i\leq d(i)$ for all~$i\in [n]$ that does not contain a Hamiltonian cycle.

One can also investigate Hamiltonian cycles in more general structures: A $k$-\textit{uniform hypergraph} (or $k$-\textit{graph}) is a pair $(V,E)$ consisting of a (vertex) set $V$ and an (edge) set~${E\subseteq V^{(k)}}$.
We sometimes write~$v(H)=\vert V(H)\vert$ and~$e(H)=\vert E(H)\vert$.
In the following let $H=(V,E)$ be a~$3$-graph.
For~$U\subseteq V$, we define~${H[U]:= \left( U,E(U)\right)}$ with~$E(U):= \{e\in E: e\subseteq U\}$.
For vertices~$v,w \in V$, we denote by~${d(v,w) := \left\vert\{x\in V: vwx \in E\}\right\vert}$ the \textit{pair degree}, where for convenience we write an edge as $vwx$ instead of $\{v,w,x\}$.
In addition, it is also common to study the \textit{vertex degree}~$d(v) := \left\vert\{ e\in E: v\in e\}\right\vert$.
The minimum pair degree is~${\delta _2 (H) := \min_{vw \in V^{2}}d(v,w)}$ and the minimum vertex degree is~$\delta_1 (H) := \min_{v\in V}d(v)$.
Often it is useful to consider something like a~$2$-uniform projection of~$H$ with respect to a vertex~$v\in V$; we define the \textit{link graph} $L_v$ of $v$ as the graph $\left(V, \left\{xy: xyv \in E\right\}\right)$.

We will follow the definition of paths and cycles in~\cite{5/9}, suggested by Katona and Kierstead in~\cite{Katona_Kierstead}.
A~$3$-graph~$P$ is a \textit{tight path} of length~$\ell$, if~$\left\vert V(P)\right\vert = \ell+2$ and there is an ordering of the vertices~$V(P) = \{x_1,\dots , x_{\ell+2}\}$ such that~$E(P)=\{x_ix_{i+1}x_{i+2}:i\in[\ell]\}$.
The tuple~$\left( x_1,x_2\right)$ is the \textit{starting pair} of~$P$, the tuple~$\left( x_{\ell+1},x_{\ell+2}\right)$ is the \textit{ending pair} of~$P$, and both are the \textit{end-pairs} of~$P$ and we say that $P$ is a tight~$\left( x_1,x_2\right)$-$\left( x_{\ell+1},x_{\ell+2}\right)$-path.
All other vertices of~$P$ are called \textit{internal}.
We sometimes identify a path with the sequence of its vertices~$x_1,\dots , x_{\ell+2}$.
Accordingly, a \textit{tight cycle}~$C$ of length~$\ell\geq 4$ consists of a path~$x_1,\dots , x_{\ell}$ of length $\ell-2$ together with the two hyperedges~$x_{\ell-1}x_{\ell}x_1$ and~$x_{\ell}x_1x_2$. 
A \textit{tight walk} of length~$\ell$ is a hypergraph~$W$ with~$V(W)=\left\{x_1,\dots , x_{\ell+2}\right\}$, where the~$x_i$ are not necessarily distinct, and~$E(W)=\left\{ x_ix_{i+1}x_{i+2}: i\in [\ell] \right\}$.
Note that the length of a path, a cycle or a walk is the number of its edges and we will use this convention for cycles, paths, and walks in graphs as well.

One might also consider degree conditions for loose Hamiltonian cycles in~$k$-uniform hypergraphs, in which consecutive edges intersect in less than~$k-1$ vertices.
Loose Hamiltonian cycles were for instance studied in~\cite{loose_vertex,loose_min_deg,loose_co_deg,loose_vertex2}.
From now on we only consider tight paths and cycles and consequently we may omit the prefix ``tight''.

In recent years, there has been some progress to achieve Dirac like results for hypergraphs.
R\"odl, Ruci\'nski, and Szemer\'edi~\cite{Dirac_type} started by showing that for~$\alpha > 0$, there is some~$n_0$ such that every~$3$-graph on~$n\geq n_0$ vertices with minimum pair degree at least~$(\frac{1}{2} + \alpha ) n$ contains a Hamiltonian cycle. 
Actually, in~\cite{Dirac_type_improved} they improved the result to the following.
\begin{theorem}\label{thm:minpairdeg}
	Let $H$ be a~$3$-graph on $n$ vertices, where~$n$ is sufficiently large.
	If $H$ satisfies~${\delta_2(H) \geq \left\lfloor n/2\right\rfloor}$, then~$H$ has a Hamiltonian cycle.
	Moreover, for every~$n$, there exists an $n$-vertex~$3$-graph~$H_n$ such that ${\delta_2\left( H_n\right) = \left\lfloor n/2\right\rfloor - 1}$ and~$H_n$ does not have a Hamiltonian cycle.
\end{theorem}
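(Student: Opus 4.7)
The plan is to follow the by-now-standard \emph{absorbing method} for Dirac-type Hamiltonicity problems, coupled with a separate analysis of a near-extremal case. One fixes a small $\alpha>0$ and distinguishes whether $H$ is $\alpha$-close to the tight extremal construction $H_n$ (whose structure is essentially bipartite-like and certifies that $\lfloor n/2\rfloor-1$ is best possible) or not. This dichotomy is the reason the sharp bound $\lfloor n/2\rfloor$ can be reached, as opposed to $(1/2+\alpha)n$.

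In the non-extremal case I would carry out the four classical steps. First, an \textbf{Absorbing Path Lemma}: for every vertex $v$ the pair-degree hypothesis supplies many short ``absorbers'', i.e.\ short tight paths with fixed end-pairs whose vertex set can be extended by $v$ while preserving the end-pairs. A probabilistic selection, followed by the connection lemma below, assembles these absorbers into a single tight path $P_{\textrm{abs}}$ of sublinear length that can absorb any vertex set $R$ of size at most $\beta n$. Second, a \textbf{Connection Lemma}: any two ordered pairs $(a_1,a_2)$ and $(b_1,b_2)$ whose link graphs behave typically can be joined by a tight path of bounded length, with internal vertices drawn from a prescribed set. Third, a \textbf{Reservoir Lemma}: a small random set $\cR\subseteq V(H)\setminus V(P_{\textrm{abs}})$ is selected so that the connection lemma can be applied with internal vertices inside~$\cR$, for essentially every pair one might meet. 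Fourth, a \textbf{Path Cover step}: using the weak (or Frankl--R\"odl) regularity lemma for $3$-graphs together with the pair-degree condition, cover $V(H)\setminus (V(P_{\textrm{abs}})\cup\cR)$ by a bounded number of long tight paths whose end-pairs land in typical pairs. One then iteratively connects these long paths to each other and to $P_{\textrm{abs}}$ through $\cR$, closes up via one final connection, and finally uses the absorption property of $P_{\textrm{abs}}$ to swallow the leftover vertices (namely the unused portion of $\cR$ together with the $o(n)$ vertices missed by the path cover), yielding a tight Hamiltonian cycle.

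In the extremal case one gives a direct, stability-type construction. After identifying the approximate vertex partition witnessing extremality, the pair-degree condition $\delta_2(H)\geq\lfloor n/2\rfloor$ is used to show that the partition cannot be perfectly balanced in the ``bad'' way, and a careful greedy/matching argument with small adjustments produces a Hamiltonian cycle by hand. The matching lower-bound construction $H_n$ with $\delta_2(H_n)=\lfloor n/2\rfloor-1$ can be exhibited concretely, e.g.\ by taking two nearly balanced vertex classes and placing as edges all triples meeting both classes in the ``wrong'' parity, so that any tight cycle forced by the structure must skip a vertex.

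The main obstacle is the passage from the asymptotic bound $(1/2+\alpha)n$ (where the absorbing scheme essentially suffices on its own) to the sharp bound $\lfloor n/2\rfloor$. The extremal case analysis is delicate, because one has to handle configurations that are structurally very close to a non-Hamiltonian graph and yet, by a single unit in the pair degree, must still yield a Hamiltonian cycle; this requires a robust stability statement quantifying how an almost-extremal $H$ must look, together with an explicit Hamiltonian cycle construction in that regime. A secondary difficulty is engineering the absorbing path with end-pairs whose pair degrees remain high after the path is fixed, so that the subsequent connection and covering steps can still be applied; this is handled by choosing the absorbers and reservoir jointly at random and applying concentration.
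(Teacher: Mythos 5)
Theorem~\ref{thm:minpairdeg} is not proved in this paper: it is quoted verbatim from R\"odl, Ruci\'nski, and Szemer\'edi~\cite{Dirac_type_improved}, and the present paper only uses it as background. There is therefore no ``paper's own proof'' to check you against. The paper's actual contribution, Theorem~\ref{main_theorem}, replaces the uniform minimum pair degree by a P\'osa-type pair degree matrix condition, and it only recovers the \emph{asymptotic} version of Theorem~\ref{thm:minpairdeg} (the bound $(1/2+\alpha)n$); the sharp bound $\lfloor n/2\rfloor$ and the extremal-case analysis you sketch are genuinely outside the scope of what is proved here.

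That said, your outline does track the overall strategy of~\cite{Dirac_type_improved} in broad strokes: absorbing path, connecting lemma, reservoir, long-path cover via weak hypergraph regularity in the non-extremal case, and a separate stability argument for near-extremal $H$. This matches the architecture used both there and (for the asymptotic part) in the present paper, Sections~\ref{sec_connecting_lemma}--\ref{sec_long_path}. Where your write-up falls short of a proof is exactly where it would need to in order to reach $\lfloor n/2\rfloor$: the extremal case is dismissed with ``a careful greedy/matching argument with small adjustments produces a Hamiltonian cycle by hand,'' but this is the hard half of the theorem and requires a precise stability statement plus an explicit construction, neither of which you supply. Likewise, your lower-bound construction $H_n$ with $\delta_2(H_n)=\lfloor n/2\rfloor-1$ is only gestured at (``triples meeting both classes in the wrong parity''); one should pin down the parity obstruction precisely, e.g.\ an appropriate bipartition $A\dcup B$ with $|A|=\lceil n/2\rceil+1$ where every tight cycle would be forced to contain an edge that cannot exist. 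Compare with the paper's Example~(ii), which gives a similar but slightly weaker construction showing that $\delta_2\geq n/2-2$ does not suffice.

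So: correct choice of method, and a fair high-level map of the RRS argument, but this is a proposal rather than a proof, and the two genuinely hard components (the extremal-case stability argument and the exact extremal construction) are named but not executed.
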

More recently, Reiher, R\"odl, Ruci\'nski, Schacht, and Szemer\'edi~\cite{5/9} proved the following asymptotically optimal result.
\begin{theorem}\label{5/9_theorem}
	For every~$\alpha > 0$, there is an~$n_0\in\mathds{N}$ such that every~$3$-graph~$H$ on~$n\geq n_0$ vertices with $\delta_1 (H) \geq \left( \frac{5}{9} + \alpha\right) \frac{n^2}{2}$ contains a Hamiltonian cycle.
\end{theorem}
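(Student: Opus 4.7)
The plan is to prove Theorem~\ref{5/9_theorem} via the absorption method, the standard framework for Hamiltonicity problems in $3$-graphs. The overall strategy is to combine four ingredients: a \emph{connecting lemma} that joins two tight paths by a short intermediate path; an \emph{absorbing path} $A$ that can swallow any small leftover set of vertices; a \emph{reservoir set} $R$ whose vertices are used up for making connections; and an \emph{almost-covering} of the remaining vertices by a bounded number of long tight paths.

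First I would establish the connecting lemma. If $\delta_1(H)\geq(5/9+\alpha)n^2/2$, a short link-graph calculation shows that most ordered pairs $(a,b)$ are \emph{typical}, meaning that $L_a$ and $L_b$ interact strongly enough to provide many common extensions into $E(H)$. Any two typical end-pairs can then be joined by a tight path of bounded length, even when a small prescribed set of vertices is forbidden. Using this, one constructs $A$ of length $\mu n$ probabilistically: for each $v\in V(H)$ many short absorbing gadgets can swap $v$ into the path, and the connecting lemma stitches these gadgets into a single tight path with fixed end-pairs. Standard concentration then shows that with positive probability $A$ absorbs any $U\subseteq V(H)\setminus V(A)$ with $|U|\leq \mu^{2}n$.

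Next I would prove an almost-covering lemma: after removing $V(A)\cup R$, the remainder can be covered by a bounded number of vertex-disjoint long tight paths, leaving only $o(n)$ uncovered vertices. The natural tool is the weak hypergraph regularity lemma; in the reduced multihypergraph, the $5/9$ density hypothesis yields enough dense clusters to support long tight paths via a counting/embedding argument. Combining everything: connect $A$ to the long paths using reservoir vertices via the connecting lemma, then invoke the absorbing property of $A$ to swallow the remaining $o(n)$ vertices, closing the Hamiltonian cycle.

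The main obstacle is that the constant $5/9$ is tight, so the proof naturally splits into two regimes. The extremal constructions witnessing $5/9$ have rigid $3$-partite block structure; in the \emph{extremal case}, where $H$ is close to such a configuration, one builds the Hamiltonian cycle directly by exploiting the partition. In the \emph{non-extremal case}, enough typical pairs are available for the connecting lemma to go through freely. The delicate point is that, unlike in the pair-degree setting of Theorem~\ref{thm:minpairdeg}, the vertex degree condition permits a few pairs $\{u,v\}$ to have catastrophically small pair degree; ensuring that such bad pairs cannot cluster so as to block connections is the technical heart of the argument and the reason the proof is substantially more involved than that of Theorem~\ref{thm:minpairdeg}.
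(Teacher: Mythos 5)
This theorem is not proved in the present paper; it is quoted from the paper of Reiher, R\"odl, Ruci\'nski, Schacht, and Szemer\'edi (reference~\cite{5/9}), so there is no in-paper proof to compare against. Evaluating your sketch on its own terms, the broad absorption framework you describe (connecting lemma, absorbing path, reservoir, and an almost-covering via weak hypergraph regularity) does match the architecture of the actual argument in~\cite{5/9}, and it also matches the architecture used in this paper for Theorem~\ref{main_theorem}.

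However, the final paragraph of your proposal contains a genuine misconception. Because the hypothesis already carries a slack term $\alpha n^2/2$ above the threshold $\tfrac{5}{9}\cdot\tfrac{n^2}{2}$, this is an \emph{asymptotic} result and no extremal/non-extremal case split is needed or used; that kind of stability analysis belongs to exact-threshold results such as Theorem~\ref{thm:minpairdeg}. You have correctly identified the delicate point --- with only a vertex degree bound, individual pair degrees $d(u,v)$ can be tiny, so one cannot na\"ively run a pair-degree connecting argument --- but the mechanism the authors of~\cite{5/9} use to overcome it is not extremal case analysis. It is the notion of a \emph{robust subgraph} inside each link graph $L_v$: a well-connected subgraph whose existence is guaranteed precisely because $\delta_1\geq(\tfrac{5}{9}+\alpha)\tfrac{n^2}{2}$, and with the crucial property that any two robust subgraphs $R_u$, $R_v$ share many edges. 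Connections are then made inside these robust subgraphs rather than by requiring all pairs to be ``typical.'' The present paper alludes to this explicitly (both in the discussion before the proof of Lemma~\ref{connecting_lemma} and in Section~\ref{concluding_remarks}) and explains that in its own pair-degree setting the link graphs of large vertices play the role that robust subgraphs play in~\cite{5/9}. Replacing your extremal/non-extremal dichotomy with the robust-subgraph mechanism would bring your sketch in line with the real proof.
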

Since the first version of this article, this has been generalised to all~$k$ independently by Lang and Sanhueza-Matamala~\cite{LSM} and by Polcyn, Reiher, Rödl, and myself~\cite{tyh}.

In this work, we study a new pair degree condition that forces large~$3$-graphs to contain a Hamiltonian cycle.
Call a matrix~$(d_{ij})_{ij}$ \emph{Hamiltonian} if every~$3$-graph~$H=\left([n],E\right)$ with~$d(i,j)\geq d_{ij}$, for all~$ij\in[n]^{(2)}$, contains a Hamiltonian cycle.
It would be very desirable to get a result for~$3$-graphs similar to the one by Chv\'atal for degree sequences in graphs, that is, a characterisation of all Hamiltonian matrices.
For the graph case, P\'osa's result (Theorem \ref{Posa_theorem}) was a step towards the characterisation by Chv\'atal. 
In a sense, our main result can be seen as a~$3$-uniform (asymptotic) analogue of the theorem by P\'osa.
\begin{theorem}[Main result]\label{main_theorem}
	For $\alpha > 0$, there exists an~$n_0\in\mathds{N}$ such that for all~$n\in\mathds{N}$ with~$n\geq n_0$, the following holds.
	If~${H=\left( [n],E\right)}$ is a~$3$-graph with~$d(i,j)\geq \min\left( i,j,\frac{n}{2}\right) + \alpha n$ for all~${ij \in [n]^{(2)}}$, then~$H$ contains a (tight) Hamiltonian cycle.
\end{theorem}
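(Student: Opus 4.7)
The plan is to adapt the absorption method of Rödl, Ruciński, and Szemerédi to the non-uniform pair degree setting. The main new difficulty compared to the proof of Theorem~\ref{thm:minpairdeg} is that vertices with small index~$i$ may have pair degrees as small as~$i+\alpha n$, so low-index vertices must be treated asymmetrically from high-index ones. The high-level structure is the usual four-step recipe: (1) a connecting lemma, (2) a reservoir set, (3) an absorbing path, and (4) a long almost-spanning path which is then closed into a cycle via the absorbing path and reservoir.

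\textbf{Connecting lemma and reservoir.} First I would prove that any two disjoint ordered pairs $(a,b)$ and $(c,d)$ can be joined by a short tight $(a,b)$-$(c,d)$-path using very few intermediate vertices. The subtle point is that, unlike in Theorem~\ref{thm:minpairdeg}, the pair $(a,b)$ or $(c,d)$ might involve a small-index vertex $v$ whose link graph $L_v$ has only $\Theta(\alpha n^2)$ edges. One can nevertheless make the connection: starting from $(a,b)$, the $\alpha n$ extra pair degree guarantees many extensions into the high-index vertices of $V$, and once one has entered the high-index region the minimum pair degree is $\geq n/2+\alpha n$ and the standard connecting machinery applies. A routine random-selection argument then produces a reservoir set $R$ of size $\Theta(\gamma n)$ that inherits the degree condition up to an error $o(n)$ and through which the connecting lemma still works.

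\textbf{Absorbing path.} Next I would construct a tight path $P_A$ on $o(n)$ vertices, with a designated end-pair, that can absorb any small set $U\subseteq V\setminus V(P_A)$ of size at most $\beta n$. Following the standard template, for each $v\in[n]$ one looks at \emph{absorbers} of the form $(x_1,x_2,x_3,x_4,x_5)$ such that both $x_1\dots x_5$ and $x_1x_2vx_3x_4x_5$ are tight paths, i.e.\ $\{x_1x_2v,x_2vx_3,vx_3x_4\}\subseteq E$ plus the three tight-path edges on $x_1,\dots,x_5$. Even for the worst case $v=1$, the condition $d(1,j)\geq 1+\alpha n$ yields $\Omega(\alpha^{O(1)}n^5)$ absorbers for $v$, so by a probabilistic argument one can find a linear-sized family of vertex-disjoint absorbers that covers every $v$ many times, and then chain them together using the connecting lemma.

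\textbf{Long path cover and closing.} Removing $V(P_A)$ and $R$, I would then build a tight path covering all but $o(n)$ of the remaining vertices, by repeatedly extending a current path and occasionally splicing in short connections through~$R$. One has to be a little careful here: if the leftover set contained almost all low-index vertices, the absorbing path would be overloaded, so the long-path construction should be arranged to consume low-index vertices early (for instance by sorting extensions by the index of the extending vertex). Finally, I would use the connecting lemma to attach both ends of the long path to the end-pair of $P_A$ through $R$, producing a tight Hamiltonian cycle after the absorbing step swallows the $o(n)$ uncovered vertices.

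The hardest part is the \textbf{connecting lemma when the end-pair involves small-index vertices}. In the symmetric Rödl--Ruciński--Szemerédi setup every pair has pair degree $\geq n/2$, so both ends look the same; here a pair $\{i,j\}$ with $i$ small has only $\approx\alpha n$ codegree, which sharply limits the first-step neighbourhood. The remedy I envisage is a two-phase connection: use the $\alpha n$ extra pair degree to take a few carefully chosen steps into the high-index part of~$H$, where the usual high-codegree connecting arguments take over. Ensuring that these initial steps can be made inside the reservoir (which will typically be dominated by high-index vertices) and preserving enough freedom for repeated uses of the lemma throughout the argument is the main technical point that needs to be controlled uniformly.
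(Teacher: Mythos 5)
Your high-level plan matches the paper's strategy (absorption method with a connecting lemma, reservoir, absorbing path, and long path), but there is a genuine gap in the absorber step. You propose the standard $5$-tuple absorber $(x_1,\dots,x_5)$: both $x_1\cdots x_5$ and $x_1x_2vx_3x_4x_5$ should be tight paths, which forces $x_1x_2x_3x_4$ to be a path in $L_v$ while simultaneously $x_1x_2x_3,\,x_2x_3x_4\in E$. For a low-index vertex such as $v=1$ this does not obviously work: the degree hypothesis only gives $|N_{L_v}(x_2)|=d(1,x_2)\geq 1+\alpha n$, so $N_{L_v}(x_2)$ is a set of size only about $\alpha n$ with \emph{no control over which vertices it contains}. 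You would need, e.g., a common element of $N_{L_v}(x_2)$ and $N(x_1,x_2)$, but even if $x_1,x_2\geq n/2$ the trivial bound
\[
|N_{L_v}(x_2)\cap N(x_1,x_2)|\ \geq\ (1+\alpha n)+\Bigl(\tfrac{n}{2}+\alpha n\Bigr)-n
\]
is negative, and there is nothing in the hypothesis preventing $N(1,x_2)$ from consisting entirely of small-index vertices whose pairwise codegrees are themselves only $\Theta(\alpha n)$. So the claim that the worst case $v=1$ still yields $\Omega(n^5)$ such absorbers is unjustified. The paper sidesteps this precisely by using a cascade absorber (Definition~\ref{def: absorber}) on $4s=8/\alpha$ vertices: the cascade is built greedily with each new quadruple having indices at least $\alpha n/2$ larger than the previous one, so that after $s$ stages all indices exceed $n/2$ and closing the cascade uses the full $n/2+\alpha n$ codegree. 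The whole point of the long cascade is to import the ``climbing up'' idea \emph{into the absorber itself}; a constant-length absorber that does not climb cannot be shown to exist in quantity $\Omega(n^{c})$ under the given hypothesis.

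Two smaller points. In the connecting lemma, after climbing both end-pairs up to index $\geq n/2$, you say ``the standard connecting machinery applies,'' but the RRS connecting argument for $\delta_2\geq n/2$ uses the codegree bound for \emph{all} pairs, which you do not have here; the paper's Step~2 is a genuinely new argument (inserting a middle pair $(a,b)$, double-counting common link edges of high-index vertices, and Jensen). For the long path, your greedy-extension-plus-``consume low-index vertices early'' heuristic is not obviously implementable: once low-index vertices are interleaved into a path you again face the $\Theta(\alpha n)$ codegrees, and there is no clear reason a greedy order avoids getting stuck with a linear leftover. The paper instead uses weak hypergraph regularity plus a matching lemma (Lemma~\ref{matching_lemma}) in the reduced $3$-graph, where the order-dependent degree condition is approximately inherited and a lexicographic maximality argument yields a near-perfect matching; this is where the non-uniformity of the degree condition is really absorbed, and the proposal does not replace it with anything of comparable force.
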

This result strengthens the asymptotic version of Theorem~\ref{thm:minpairdeg} achieved in~\cite{Dirac_type}.

Let us remark that recently there have also been related results on degree sequences in graphs. For example, Treglown \cite{Treglown} gave a degree sequence condition that forces the graph to contain a clique factor and Staden and Treglown \cite{StadenTreglown} proved a degree sequence condition that forces the graph to contain the square of a Hamiltonian cycle.
Since the first version of this article, Bowtell and Hyde~\cite{BoHy} obtained a degree sequence condition for perfect matchings in~$3$-graphs.

Note that in the proof (and the proofs of the lemmas) we can always assume $\alpha \ll 1$. Before we start with the outline of the proof of Theorem \ref{main_theorem} in the next section, we give the following examples showing that our result is asymptotically optimal in some regard.

\begin{example}
	\renewcommand{\labelenumi}{\theenumi}
	\renewcommand{\theenumi}{(\roman{enumi})}%
	\begin{enumerate}
		\item\label{it: onethird} Consider the partition $X \dot\cup Y = [n]$ with $X = \left[\left\lceil \frac{n+1}{3}\right\rceil\right]$ and let $H$ be the hypergraph on $[n]$ containing all triples $e\in V^{(3)}$ with $\left\vert e\cap X\right\vert \neq 2$.
		
		Then we have~$d(i,j)\geq \min\left( i,j,\frac{n}{2}\right) - 1$ for all~${ij \in [n]^{(2)}}$. 
		However, if there was a Hamiltonian cycle~$C$ in~$H$, it would contain at least one edge with two vertices from~$X$. But such an edge can only lie in a cycle in which all vertices are from~$X\subsetneq [n]$. 
		Hence,~$H$ does not contain a Hamiltonian cycle.
		\item Next, look at the partition $X \dot\cup Y = [n]$ with $X = \left[\left\lfloor \frac{n}{2}\right\rfloor\right]$ and let $H$ be the hypergraph on $[n]$ containing all triples $e\in V^{(3)}$ such that $\left\vert e\cap Y\right\vert \neq 2$.
		
		Then for all ${ij \in [n]^{(2)}}$, we have $d(i,j)\geq \frac{n}{2} - 2$. But an analogous argument as above shows that $H$ does not contain a Hamiltonian cycle.
	\end{enumerate}
\end{example}
The two examples show that Theorem \ref{main_theorem} does not hold when replacing the degree condition with $d(i,j)\geq \min\left( i,j,\frac{n}{2}\right) -1$ (not even when replacing it with~$d(i,j)\geq \min\left( i,j\right) -1$) and neither when replacing it with~$d(i,j)\geq \min\left( i,j,\frac{n}{2}-2\right)$.
Note that this means that Theorem~\ref{main_theorem} cannot (asymptotically) be improved on by decreasing the requirement on the degree of every pair and neither by ``capping'' at a lower value than at~$\frac{n}{2}-2$.
However, it is not yet a Chv\'atal like characterisation of all Hamiltonian matrices.
For instance, it is easy to see that there are Hamiltonian matrices with~$d_{ij}=0$ for some~$i,j\in[n]$.

In the following, we will omit rounding issues if they are not important, e.g., we will assume that~$\alpha n$ etc.~are natural numbers.
Further, for~$A,B\subseteq \mathds{R}_+$, we write that a statement~$\mathfrak{S}$ holds for all~$a\in A$ and~$b\in B$ with~$a\ll b$, to say that for every~$b\in B$, there exists an~$a_0\in\mathds{R}_+$ such that for all~$a\in A$ with~$a\leq a_0$, the statement~$\mathfrak{S}$ holds.

\subsection*{Organisation} In the next section we give an overview of the proof, state the auxiliary results for each step and finally deduce the main result Theorem \ref{main_theorem} from these. Sections~\ref{sec_connecting_lemma}-\ref{sec_long_path} are devoted to the proofs of the auxiliary results. In the end, we collect some interesting related problems in Section \ref{concluding_remarks}.

\section{Overview and Final Proof}

The proof of Theorem~\ref{main_theorem} uses the absorption method introduced by R\"odl, Ruci\'nski, and Szemer\'edi in \cite{Dirac_type}, which helps to reduce the problem of finding a Hamiltonian cycle to the problem of constructing a cycle containing almost all vertices.

This strategy proceeds by constructing a cycle containing almost all vertices of the hypergraph~$H$ and a special subpath into which we can ``absorb'' any small set of vertices, meaning we can integrate the left-over vertices into this subpath to obtain a Hamiltonian cycle.
For that, we use that for every vertex~$v\in V(H)$, there exist many absorbers in~$H$, a structure consisting of several paths which can be restructured into paths containing~$v$ while keeping the same end-pairs.
Then, utilising the probabilistic method, we can construct an absorbing path, a path containing many absorbers for every vertex.
Lastly, we build a long path in the remainder of~$H$, consisting of almost all vertices, and connect it with the absorbing path to a cycle into which the left-over vertices can be absorbed.

For these constructions we often need to connect two paths, that is, find a path between their end-pairs.
Hence, we will begin by showing that we can connect every pair of pairs of vertices by a large number of paths with a fixed length. 
\begin{lemma}[Connecting Lemma]\label{connecting_lemma}
	Let~$\alpha,\vartheta>0$,~$n,L\in\mathds{N}$ with~$1/n\ll\vartheta\ll1/L\ll \alpha$.
	If~${H=([n],E)}$ is a~$3$-graph with~${d(i,j)\geq \min\left( i,j,\frac{n}{2}\right) + \alpha n}$, for all~${ij \in [n]^{(2)}}$, then for all disjoint ordered pairs of distinct vertices~${(x,y), (w,z)\in [n]^2}$, the number of paths of length~$L$ in~$H$ connecting~$(x,y)$ and~$(w,z)$ is at least~$\vartheta n^{L-2}$.
\end{lemma}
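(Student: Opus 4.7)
The plan is to build the tight path of length $L$ in three segments: a short forward ``lift'' from $(x,y)$ into the high-index half of $[n]$, a symmetric backward lift ending at $(w,z)$, and a long middle joining the two inside the dense region. Set $M := \{v \in [n] : v > n/2\}$. For any ordered pair $(u,v) \in M^2$ of distinct vertices the hypothesis gives $d(u,v) \geq n/2 + \alpha n$, and since $|[n] \setminus M| = \lfloor n/2 \rfloor$ at least $\alpha n$ of the common neighbours of $u$ and $v$ lie in $M$. Hence inside $M$ we are in a Dirac-like regime.

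\textbf{Lifting.} I would first show that from any pair $(u_0, v_0) \in [n]^2$ there are at least $(\alpha n / 4)^s$ tight paths of length $s := 2\lceil 1/\alpha \rceil$ starting at $(u_0, v_0)$ whose final end-pair lies in $M^2$. The idea is a greedy index-boosting argument: if the current end-pair $(u,v)$ satisfies $m := \min(u,v) < n/2$, then $d(u,v) \geq m + \alpha n$, whereas only $m + \alpha n / 2$ vertices of $[n]$ have index $\leq m + \alpha n/2$, so at least $\alpha n/2$ neighbours of $(u,v)$ have index exceeding $m + \alpha n/2$. Performing this extension twice in a row promotes \emph{both} entries of the end-pair above $m + \alpha n/2$, and iterating the double-step $\lceil 1/\alpha \rceil$ times forces the end-pair into $M^2$. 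At every step at least $\alpha n/2 - s \geq \alpha n / 4$ admissible new vertices remain after excluding those already used.

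\textbf{Middle.} Given any two pairs $(u,v), (u',v') \in M^2$, I would produce at least $c_\alpha n^{L - 2s - 2}$ tight paths of length $L - 2s$ from $(u,v)$ to $(u',v')$ whose internal vertices lie in $M$, where $c_\alpha > 0$ depends only on $\alpha$. The proof is a two-sided breadth-first search inside $H$ restricted to $M$: the forward tree from $(u,v)$ of depth $\lceil (L - 2s)/2 \rceil$ contains at least $(\alpha n/2)^{\lceil (L-2s)/2 \rceil}$ tight paths, exploiting the $\alpha n$ in-$M$ neighbours of every pair of distinct vertices in $M$, and the backward tree from $(u',v')$ admits an analogous bound. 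A second-moment / pigeonhole computation then shows that a positive fraction of pairs in $M^2$ are reached by approximately the average number of paths from both sides, and joining the two trees at such a ``rich'' middle pair yields the required count. Losses from walks to paths and from vertex reuse across segments are absorbed into $c_\alpha$.

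Concatenating the three segments and summing over intermediate pairs gives at least $(\alpha n/4)^{2s} \cdot c_\alpha n^{L - 2s - 2} = \Omega_\alpha(n^{L-2})$ tight paths of length $L$ from $(x,y)$ to $(w,z)$, which comfortably exceeds $\vartheta n^{L-2}$ once $\vartheta$ is small enough with respect to $\alpha$ and $L$. The main obstacle is the middle step: since the pair degrees inside $M$ are only of order $\alpha n$ (and not at the $|M|/2$ Dirac threshold), the classical R\"odl--Ruci\'nski--Szemer\'edi connecting lemma does not apply verbatim, and one has to carry out the BFS and meeting argument by hand, controlling the count of rich end-pairs at every level via a variance bound. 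The lifting step is by contrast a straightforward greedy argument and should pose no serious difficulty.
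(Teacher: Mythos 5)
Your lifting step is essentially the paper's ``first step'' (climbing up the degree sequence), and it is fine. The gap is in your middle step, and it is not a technical nuisance that can be patched by a careful second-moment computation: the claim you propose to prove there is simply false under the hypotheses of the lemma.

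Concretely, you claim that for \emph{any} two ordered pairs $(u,v),(u',v')\in M^2$ (with $M=\{v:v>n/2\}$) there are $\Omega_\alpha(n^{L-2s-2})$ tight paths of length $L-2s$ between them with all internal vertices in $M$. Consider the following $3$-graph $H$ on $[n]$: put in every triple containing at least one vertex of $[n/2]$; inside $M$, split $M=M_1\dcup M_2$ with $|M_1|=|M_2|=n/4$ and put in exactly those triples $e\subseteq M$ with $|e\cap M_2|\ne 2$ (this is the same construction as the paper's Example~(i), applied inside $M$). One checks directly that $H$ satisfies $d(i,j)\ge\min(i,j,n/2)+\alpha n$ for every pair and every $\alpha<1/4$. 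However, if $(w_1,w_2)\in M_2^2$ and $w_3\in M$ with $w_1w_2w_3\in E$, then necessarily $w_3\in M_2$; by induction every tight walk in $H[M]$ starting from a pair in $M_2^2$ has all its vertices in $M_2$, so no pair of $M_1^2$ is ever reached. Thus there are pairs in $M^2$ that cannot be connected inside $M$ at all, and your BFS-meeting argument cannot be repaired by any variance estimate, since the two trees may live in disjoint tight components of $H[M]$. The pair-degree bound $\alpha n$ inside $M$ sits far below the Dirac threshold for $|M|\approx n/2$ and gives no usable connectivity.

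The paper's ``second step'' is designed precisely to sidestep this: instead of navigating inside $H[M]$, it fixes a set $U_{x'y'}$ of $\alpha n$ high-index common neighbours of $(x',y')$ and similarly $U_{w'z'}$, then passes to the \emph{link graphs} $L_{u_i}$, $L_{v_i}$ of these high-index vertices. Each such link graph inherits the full degree condition $d_{L_u}(j)\ge\min(j,n/2)+\alpha n$, so the climb-up works \emph{inside} each $L_u$; moreover any two such link graphs share $\Omega_\alpha(n^2)$ edges, which is the crucial structural input you are missing. The walks are then built by inserting $u_{i(k)}$'s (resp.\ $v_{j(k)}$'s) at every third position along a common $2$-uniform walk in the link graphs, with a middle pair $(a,b)$ chosen by a double-counting/Jensen argument over the sets $I_{ab}$. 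To fix your proposal you would need to replace the in-$M$ BFS by this link-graph mechanism (or supply some other structure that is both ``well connected'' and ``pairwise overlapping'' in the sense the paper exploits); the current middle step does not go through.
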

See Section \ref{sec_connecting_lemma} for the proof of Lemma \ref{connecting_lemma}.

Later, we will use this result whenever we need to connect different paths that have been constructed before.
However, when we want to connect paths after almost all the vertices are covered by paths, we need to ensure that there still exist paths, disjoint to all previously built paths.
To this end, we will take a special selection of vertices - the reservoir - aside, with the property that for every pair of pairs of vertices, we still have many paths of fixed length connecting them, where all internal vertices of those paths are vertices of the reservoir.
The existence of such a set will be shown by the probabilistic method.
\begin{lemma}[Reservoir Lemma]\label{reservoir_lemma}
	Let~$\alpha,\vartheta>0$ and~$n,L\in\mathds{N}$ such that~$1/n\ll\vartheta\ll 1/L\ll\alpha$.
	If~$H=([n],E)$ is a~$3$-graph satisfying~$d(i,j)\geq \min\left( i,j,\frac{n}{2}\right) + \alpha n$, for all~${ij \in [n]^{(2)}}$, then there exists a reservoir set~${\mathcal{R} \subseteq [n]}$ with~${\frac{\vartheta ^2}{2}n \leq \left\vert \mathcal{R} \right\vert \leq \vartheta ^2 n}$ such that for all disjoint ordered pairs of distinct vertices~$(x,y),(w,z)\in[n]^{2}$, there are at least~${\vartheta \left\vert \mathcal{R} \right\vert ^{L-2}/2}$ paths of length~$L$ in~$H$ which connect~$(x,y)$ and~$(w,z)$ and whose internal vertices all belong to~$\mathcal{R}$.
\end{lemma}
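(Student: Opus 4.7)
The plan is to pick $\mathcal{R}\subseteq [n]$ at random, including each vertex independently with probability $p=\vartheta^{2}$, and to verify the two requirements of the lemma via concentration and a union bound. The size condition is immediate: a Chernoff bound gives $\tfrac{\vartheta^{2}}{2}n\leq |\mathcal{R}|\leq \vartheta^{2}n$ with probability $1-o(1)$.

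For the connectivity condition, fix any disjoint ordered pairs $(x,y),(w,z)\in[n]^{2}$ of distinct vertices. First I apply the Connecting Lemma~\ref{connecting_lemma} with a parameter $\vartheta_{0}$ chosen so that $\vartheta\leq\vartheta_{0}\ll 1/L$, obtaining a family $\mathcal{P}$ of at least $\vartheta_{0}n^{L-2}$ tight paths of length $L$ from $(x,y)$ to $(w,z)$ in $H$. For $P\in\mathcal{P}$, let $A_{P}$ be the event that all $L-2$ internal vertices of $P$ lie in $\mathcal{R}$, so $\Pr[A_{P}]=p^{L-2}$, and let $X=\sum_{P\in\mathcal{P}}\mathds{1}[A_{P}]$. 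Then $\EE[X]\geq \vartheta_{0}(pn)^{L-2}$, and since $|\mathcal{R}|\leq pn$ and $\vartheta_{0}\geq \vartheta$, any outcome with $X\geq \EE[X]/2$ automatically satisfies $X\geq \vartheta|\mathcal{R}|^{L-2}/2$, as required. It therefore suffices to show that $X\geq \EE[X]/2$ holds simultaneously for all $O(n^{4})$ such pairs of pairs.

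The main obstacle is the concentration of $X$: since its variance-to-mean-squared ratio is only $O_{L}(1)$, Chebyshev's inequality does not survive a union bound over $n^{4}$ events. Instead I would invoke Janson's inequality, noting that $A_{P}$ and $A_{Q}$ are dependent precisely when $P$ and $Q$ share at least one internal vertex. Splitting the dependency sum $\Delta=\sum_{P\neq Q,\,P\sim Q}\Pr[A_{P}\cap A_{Q}]$ by the number $k\in\{1,\dots,L-2\}$ of shared internal vertices, the number of such ordered pairs is at most $O_{L}(n^{2(L-2)-k})$ and each contributes $p^{2(L-2)-k}$, so $\Delta=O_{L}((np)^{2L-5})$. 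Hence $\EE[X]^{2}/\Delta=\Omega_{L}(np)=\Omega_{L}(\vartheta^{2}n)$, and Janson gives $\Pr[X<\EE[X]/2]\leq \exp(-\Omega_{L}(\vartheta^{2}n))$, which is $o(n^{-4})$ for $n$ large. A union bound over all $\leq n^{4}$ choices of $(x,y),(w,z)$, together with the Chernoff estimate on $|\mathcal{R}|$, then shows that with positive probability some outcome $\mathcal{R}$ meets all requirements of the lemma.
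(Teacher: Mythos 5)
Your overall strategy matches the paper's: sparsify $[n]$ by including each vertex independently with probability $p$ near $\vartheta^2$, appeal to the Connecting Lemma for an abundance of paths of length $L$, show the path-count random variable $X$ is concentrated, and union-bound over the $O(n^4)$ choices of pairs of pairs. The genuinely different choice is the concentration inequality: you invoke Janson's inequality, computing the dependency sum $\Delta$ over overlapping paths, while the paper invokes the Azuma--Hoeffding/McDiarmid bounded-differences inequality (Lemma~\ref{azuma_hoeffding_ineq}), using only that toggling a single vertex's membership in $\mathcal{R}$ changes $X$ by at most $(L-2)n^{L-3}$. Both yield $\exp(-\Omega(n))$ failure probability for a single pair; the bounded-differences route requires only the Lipschitz bound and no pair-counting, making it a bit cleaner here, but your Janson computation is also valid. (A small quibble: the ratio $\operatorname{Var}(X)/\mathbb{E}[X]^2$ is in fact $O_L(1/(np))=o(1)$, not $O_L(1)$; Chebyshev fails for the union bound because this $o(1)$ does not decay polynomially fast, not because it fails to vanish.)

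There is, however, a small but real parameter error you should fix. Taking $p=\vartheta^2$ makes $\mathbb{E}[|\mathcal{R}|]=\vartheta^2 n$, so the required upper bound $|\mathcal{R}|\leq\vartheta^2 n$ holds only with probability about $1/2$, not $1-o(1)$ as you claim, and your step ``since $|\mathcal{R}|\leq pn$'' is likewise not a high-probability event. The paper's remedy is to take $p=\bigl(1-\tfrac{1}{10L}\bigr)\vartheta^2$, bounded strictly below $\vartheta^2$; then Chernoff does give $|\mathcal{R}|\leq\vartheta^2 n$ with probability $1-o(1)$. The resulting loss is then harmless: one needs $\vartheta_0 (pn)^{L-2}\geq\vartheta(\vartheta^2 n)^{L-2}$, i.e.\ $\vartheta_0\bigl(1-\tfrac{1}{10L}\bigr)^{L-2}\geq\vartheta$, which your slack parameter $\vartheta_0>\vartheta$ absorbs easily. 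With that adjustment and a union bound (over the size bounds and the $n^4$ path-count events), your argument closes.
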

It follows that removing a few vertices from the reservoir will not destroy its connectability property.
\begin{lemma}[Preservation of the Reservoir]\label{reservoir_preservation_lemma}
	Let~$\alpha,\vartheta>0$ and~$n,L\in\mathds{N}$ such that~$1/n\ll\vartheta\ll 1/L\ll\alpha$. 
	If~${H=([n],E)}$ is a~$3$-graph satisfying~$d(i,j)\geq \min\left( i,j,\frac{n}{2}\right) + \alpha n$, for all~${ij \in [n]^{(2)}}$,~$\mathcal{R}$ is given by Lemma~\ref{reservoir_lemma}, and~${\mathcal{R}' \subseteq \mathcal{R}}$ with~$\vert\mathcal{R}'\vert\leq2\vartheta ^4 n$, then for all disjoint ordered pairs of distinct vertices~$(x,y),(w,z)\in[n]^2$, there is an~$(x,y)$-$(w,z)$-path of length~$L$ in~$H$ with all internal vertices belonging to~$\mathcal{R}\setminus \mathcal{R}'$.
\end{lemma}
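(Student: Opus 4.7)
The plan is to deduce Lemma~\ref{reservoir_preservation_lemma} from Lemma~\ref{reservoir_lemma} by a simple counting argument: among the many $(x,y)$-$(w,z)$-paths of length $L$ with internal vertices in~$\mathcal{R}$ guaranteed by Lemma~\ref{reservoir_lemma}, only a small fraction can meet the small set~$\mathcal{R}'$, so at least one path must avoid it.

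More concretely, I would fix disjoint ordered pairs $(x,y),(w,z)\in [n]^2$ of distinct vertices and call a path \emph{good} if it is an $(x,y)$-$(w,z)$-path of length $L$ whose internal vertices all lie in~$\mathcal{R}$. A path of length~$L$ has exactly $L+2$ vertices, hence $L-2$ internal vertices. By Lemma~\ref{reservoir_lemma}, the number of good paths is at least $\vartheta |\mathcal{R}|^{L-2}/2$. I would then bound the number of good paths that use at least one vertex of~$\mathcal{R}'$: for each choice of an internal position $i\in [L-2]$ and each vertex $r\in\mathcal{R}'$, the number of good paths placing $r$ at position $i$ is at most $|\mathcal{R}|^{L-3}$, since the remaining $L-3$ internal vertices come from~$\mathcal{R}$. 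Summing gives an upper bound of $(L-2)\,|\mathcal{R}'|\,|\mathcal{R}|^{L-3}$ on the number of ``bad'' good paths.

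It then suffices to verify the inequality
\[
 (L-2)\,|\mathcal{R}'|\,|\mathcal{R}|^{L-3} \;<\; \frac{\vartheta}{2}\,|\mathcal{R}|^{L-2},
\]
that is, $2(L-2)|\mathcal{R}'| < \vartheta |\mathcal{R}|$. Using the lower bound $|\mathcal{R}|\geq \vartheta^2 n/2$ from Lemma~\ref{reservoir_lemma} and the hypothesis $|\mathcal{R}'|\leq 2\vartheta^4 n$, the inequality reduces to $8(L-2)\vartheta < 1$, which is satisfied by the hierarchy $\vartheta\ll 1/L$. Consequently, at least one good path avoids~$\mathcal{R}'$ entirely, which is exactly the desired conclusion.

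There is no real obstacle here; the only thing to be careful about is the bookkeeping of internal vertices (a path of length $L$ has $L-2$ internal vertices, not $L$) and the use of $|\mathcal{R}|^{L-3}$ rather than $n^{L-3}$ in the counting, since Lemma~\ref{reservoir_lemma} restricts internal vertices to~$\mathcal{R}$. Everything else is a direct consequence of Lemma~\ref{reservoir_lemma} combined with the chosen hierarchy of constants.
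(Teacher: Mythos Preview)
Your proposal is correct and follows essentially the same approach as the paper: both count the paths through $\mathcal{R}$ guaranteed by Lemma~\ref{reservoir_lemma}, subtract an upper bound of $(L-2)\,|\mathcal{R}'|\,|\mathcal{R}|^{L-3}$ for those meeting~$\mathcal{R}'$, and verify via the hierarchy $\vartheta\ll 1/L$ that the difference is positive. The only cosmetic difference is that the paper first records the intermediate estimate $|\mathcal{R}'|\leq \vartheta^{3/2}|\mathcal{R}|$ before comparing, whereas you plug in the explicit bounds directly.
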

See Section \ref{sec_reservoir} for the proof of Lemma \ref{reservoir_lemma} and Lemma \ref{reservoir_preservation_lemma}.

The proof will continue with the definition of the absorbers and we will show that for each vertex, there are many absorbers. We make use of this fact when we show that a small random selection of tuples still contains many absorbers for every $v\in V(H)$. With the Connecting Lemma we can afterwards connect all the small paths in that selection to a path that can absorb any small set of vertices.
\begin{lemma}[Absorbing Path]\label{absorbing_path_lemma}
	Let~$\alpha,\vartheta>0$ and~$n,L\in\mathds{N}$ such that~$1/n\ll\vartheta\ll 1/L\ll\alpha$.
	If~$H=([n],E)$ is a~$3$-graph satisfying~$d(i,j)\geq \min\left( i,j,\frac{n}{2}\right) + \alpha n$, for all~${ij \in [n]^{(2)}}$, and~$\mathcal{R}$ is given by Lemma~\ref{reservoir_lemma}, then there exists a path~$P_A \subseteq H\setminus\mathcal{R}$ with~$v(P_A) \leq \vartheta n$ and with the (absorbing) property that for each~${X\subseteq [n]}$ with~${\left \vert X \right \vert \leq 2\vartheta^2 n}$, there is a path with vertex set~$X \cup V(P_A)$ and the same end-pairs as $P_A$.
\end{lemma}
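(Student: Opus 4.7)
The plan is to apply the absorption method. For each $v\in[n]$, define an \emph{absorber for~$v$} to be a $5$-tuple $A=(a_1,a_2,a_3,a_4,a_5)$ of distinct vertices in $[n]\setminus(\{v\}\cup\mathcal{R})$ such that the six triples $a_1a_2a_3$, $a_2a_3a_4$, $a_3a_4a_5$, $a_1a_2v$, $a_2va_3$, $va_3a_4$ all lie in~$E(H)$. Then both $(a_1,a_2,a_3,a_4,a_5)$ and $(a_1,a_2,v,a_3,a_4,a_5)$ are tight paths with start-pair~$(a_1,a_2)$ and end-pair~$(a_4,a_5)$, so the occurrence of~$A$ inside a longer tight path lets one reroute through~$v$ without changing that path's end-pairs.

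The first step is to show that every $v\in[n]$ admits at least $cn^5$ absorbers, for some $c=c(\alpha)>0$. Whenever $\min(i,j)\geq n/2$ the hypothesis gives $d(i,j)\geq(1/2+\alpha)n$, so within the upper half $T=\{\lceil n/2\rceil+1,\dots,n\}$ any two pair-neighbourhoods meet in at least $2\alpha n$ vertices, and when~$v$ also lies in~$T$ a greedy iterative choice of $a_2,a_3,a_4\in T$ followed by $a_1,a_5\in T$ from the relevant double neighbourhoods inside~$T$ yields the required~$\Omega(n^5)$ absorbers. For~$v$ in the lower half the link bound $d(v,\cdot)\geq\alpha n$ is weaker and more care is needed, but combining the upper-half density with the link pair degrees at~$v$ (for instance by placing $a_2,a_3,a_4\in T$ first using only $H$-edges and afterwards controlling the three triples through~$v$ via $d(v,\cdot)\geq\alpha n$) still yields $\Omega(n^5)$ absorbers.

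Next, select each ordered $5$-tuple of distinct vertices of $[n]\setminus\mathcal{R}$ independently with probability~$p$ of order $\vartheta/(Ln^4)$, obtaining a random family~$\mathcal{F}$. By Chernoff's inequality and an expectation bound on collisions, with positive probability $|\mathcal{F}|\leq\vartheta n/L$, each $v\in[n]$ is absorbed by at least $3\vartheta^2 n$ members of~$\mathcal{F}$, and fewer than $\vartheta^2 n$ unordered pairs of tuples in~$\mathcal{F}$ share a vertex. Deleting one tuple from each colliding pair yields a pairwise vertex-disjoint subfamily $\mathcal{F}'=\{A_1,\dots,A_m\}$ of size~$m\leq\vartheta n/L$ in which every $v\in[n]$ still has at least $2\vartheta^2 n$ absorbers. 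I would then chain the~$A_i$ into a single tight path $P_A=A_1\,C_1\,A_2\,C_2\cdots C_{m-1}\,A_m$, where each~$C_i$ is a tight path of length~$L$ between the end-pair of~$A_i$ and the start-pair of~$A_{i+1}$ supplied by Lemma~\ref{connecting_lemma}, applied to the induced sub-$3$-graph on $[n]\setminus(\mathcal{R}\cup U_i)$, with~$U_i$ the set of at most $5m+L(i-1)\leq\vartheta n/2$ vertices already used; this sub-$3$-graph still satisfies the pair-degree hypothesis with~$\alpha/2$ in place of~$\alpha$ after an order-preserving relabeling. The resulting $P_A\subseteq H\setminus\mathcal{R}$ has $v(P_A)\leq(5+L)m\leq\vartheta n$, and for any $X\subseteq[n]$ with $|X|\leq 2\vartheta^2 n$ we absorb its elements one at a time: each $v\in X$ has $\geq 2\vartheta^2 n$ absorbers in~$\mathcal{F}'$ while at most $|X|-1<2\vartheta^2 n$ have been used so far, so an unused absorber for~$v$ is always available and rerouting the corresponding subpath through~$v$ preserves the end-pairs of~$P_A$. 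The main obstacle is the absorber count for vertices~$v$ of small index, where only $d(v,\cdot)\geq\alpha n$ is available and the argument must combine the dense upper half with the weak link condition at~$v$.
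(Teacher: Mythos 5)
The high-level structure of your proof (probabilistic selection of a small family, pruning overlapping tuples, chaining by the Connecting Lemma while avoiding used vertices, then absorbing greedily) matches the paper. The problem is the absorber gadget itself, and that is the crux of the lemma. You use a single $5$-tuple $(a_1,\dots,a_5)$ with the requirements $a_1a_2,a_2a_3,a_3a_4\in L_v$ and $a_1a_2a_3,a_2a_3a_4,a_3a_4a_5\in E(H)$, and you yourself flag the count of these for $v$ of small index as the main obstacle. That obstacle is a genuine gap, not something that "more care" in the same spirit will fix.

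When $v$ has small index, the hypothesis only yields $d(v,w)\geq\min(v,w,n/2)+\alpha n$, which is about $\alpha n$ for every $w$, so $L_v$ is just some graph with minimum degree around $\alpha n$ and no further structure. To build your gadget you must, after fixing $a_2$ and $a_1\in N(v,a_2)$, find $a_3\in N(v,a_2)\cap N(a_1,a_2)$; both of these sets can have size only about $\alpha n$, and the inclusion--exclusion bound $|N(v,a_2)|+|N(a_1,a_2)|-n$ is far negative, so nothing forces the intersection to be nonempty, let alone of size $\Omega(n)$. Your suggested repair, "placing $a_2,a_3,a_4\in T$ first using only $H$-edges and afterwards controlling the three triples through $v$", does not work: once $a_2,a_3,a_4$ are fixed, whether $a_2a_3v\in E$ and $a_3a_4v\in E$ is already decided and cannot be enforced by the bound $d(v,\cdot)\geq\alpha n$. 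In fact, for a small vertex $v$ nothing in the hypothesis prevents all neighbourhoods $N(v,a)$ with $a\in T$ from lying entirely in $[n/2]$, in which case $L_v[T]$ has no edges at all and this choice order collapses.

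The paper resolves this by making the absorber much longer: a $4s$-tuple with $s=2/\alpha$ (Definition~\ref{def: absorber}), a cascade of short paths in which the newly chosen vertices $v_j,w_j,y_j,z_j$ are required to satisfy $v_j,w_j,y_j,z_j\geq\min(v+j\alpha n/2,n/2)$ — that is, the cascade climbs up the degree sequence step by step, and after about $1/\alpha$ steps all the vertices in play are in the upper half, where pair degrees are at least $n/2+\alpha n$ and the cascade can be closed. This climbing-up mechanism is exactly what your $5$-vertex gadget lacks, and it is the same device used in the Connecting Lemma (the "First Step") and the Long Path argument. Once the absorber is replaced by the cascade and Lemma~\ref{absorber_lemma} established, the remaining steps of your proof (Chernoff, Markov for overlaps, connecting through $H$ while avoiding $\mathcal{R}$ and already-used vertices, and the final greedy absorption) go through essentially as in the paper.
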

See Section \ref{sec_absorbing_path} for the proof of Lemma \ref{absorbing_path_lemma}.

By using weak hypergraph regularity and then an explicit result to obtain an almost perfect matching in the reduced hypergraph, we show in Section~\ref{sec_long_path} that in every hypergraph~$H$ satisfying the degree condition in Theorem~\ref{main_theorem}, there exists a path which contains almost all vertices of~$H$ (see Proposition~\ref{long_path_lemma}).
\begin{prop}[Long Path]\label{long_path_lemma}
	Let~$\alpha,\vartheta>0$ and~$n,L\in\mathds{N}$ such that~$1/n\ll\vartheta\ll 1/L\ll\alpha$.
	Let~${H=\left([n],V\right)}$ be a~$3$-graph with~$d(i,j)\geq \min\left( i,j,\frac{n}{2}\right) + \alpha n$, for all~${ij \in [n]^{(2)}}$, let~$\mathcal{R}$ be as in Lemma~\ref{reservoir_lemma}, and~$P_A$ as in Lemma~\ref{absorbing_path_lemma}.
	
	Then there exists a path~$Q\subseteq H\setminus P_A$ such that $$v(Q) \geq \left(1-2\vartheta ^2\right) n - v\left( P_A\right)$$ and~$\left\vert V(Q) \cap \mathcal{R}\right\vert \leq \vartheta ^4 n$.
\end{prop}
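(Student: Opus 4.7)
The plan is the standard absorption-method finishing step applied to~$H\setminus V(P_A)$: apply weak hypergraph regularity, extract a near-perfect matching in the reduced~$3$-graph, turn each matched triad into a long tight path by a regular-triad greedy construction, and splice the pieces together via Lemma~\ref{reservoir_preservation_lemma}, drawing all connecting internal vertices from~$\mathcal{R}$.

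Set~$V_\ast := [n]\setminus(V(P_A)\cup\mathcal{R})$, so that~$|V_\ast|\geq(1-2\vartheta)n$ and the pair degrees of~$H[V_\ast]$ fall short of those of~$H$ by at most~$|V(P_A)|+|\mathcal{R}|\leq 2\vartheta n$. Apply the weak hypergraph regularity lemma to~$H[V_\ast]$ with parameters~$\vartheta\ll 1/t\ll\epsilon\ll d\ll\alpha$, obtaining an~$\epsilon$-regular partition~$V_\ast=U_0\dcup V_1\dcup\cdots\dcup V_t$ with~$|U_0|\leq\epsilon n$ and~$|V_i|=m$ for every~$i\geq 1$. Order the clusters so that the mean index of~$V_i$ is non-decreasing in~$i$, and define the reduced~$3$-graph~$\cK$ on~$[t]$ whose edges are the~$\epsilon$-regular triads of density at least~$d$. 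A standard averaging argument transfers the pair degree condition to~$\cK$ in the form
\begin{equation*}
	d_\cK(i,j)\;\geq\;\min\!\left(i,\,j,\,\tfrac{t}{2}\right)+\tfrac{\alpha}{3}\,t\qquad\text{for every }ij\in[t]^{(2)}.
\end{equation*}
An almost-perfect matching result for~$3$-graphs (see the obstacle below) then yields a matching~$M$ in~$\cK$ missing at most~$\epsilon t$ clusters. Inside each matched triad~$\{V_i,V_j,V_k\}\in M$, the usual regular-triad greedy construction produces a tight path on at least~$(1-\sqrt{\epsilon})\cdot 3m$ vertices of~$V_i\cup V_j\cup V_k$, with freely prescribable end-pairs. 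Ordering the matched triads arbitrarily, I would iteratively apply Lemma~\ref{reservoir_preservation_lemma} to splice consecutive tight paths through connection paths of length~$L$ whose internal vertices lie in~$\mathcal{R}$; since there are fewer than~$t/3$ concatenations and each consumes~$L-2$ reservoir vertices, the cumulative~$\mathcal{R}'$-set has size~$\leq Lt\ll\vartheta^4 n$, well within the~$2\vartheta^4 n$ budget of Lemma~\ref{reservoir_preservation_lemma}, so the lemma remains applicable throughout. The resulting tight path~$Q\subseteq H\setminus P_A$ covers all but at most~$|U_0|+\epsilon n+\sqrt{\epsilon}\,n\leq\vartheta^2 n$ vertices of~$V_\ast$, which together with~$|\mathcal{R}|\leq\vartheta^2 n$ gives~$v(Q)\geq(1-2\vartheta^2)n-v(P_A)$ and~$|V(Q)\cap\mathcal{R}|\leq\vartheta^4 n$, as required.

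I expect the main obstacle to be the matching step, because the condition inherited by~$\cK$ is \emph{non-uniform}: for high-index pairs~$i,j>t/2$ one has~$d_\cK(i,j)\geq t/2+\alpha t/3$, comfortably above the~$t/3$ threshold known to guarantee near-perfect matchings in~$3$-graphs, but low-index pairs may only satisfy~$d_\cK(i,j)\geq\alpha t/3$, which is far too weak in isolation. I would handle this by first applying a standard near-perfect matching result to the sub-$3$-graph of~$\cK$ induced on the high-index clusters, where~$\delta_2$ is genuinely at least half the number of vertices, and then greedily extending the resulting matching by attaching each leftover low-index cluster to two still-unused high-index clusters through an edge of~$\cK$; the existence of such edges is guaranteed for every low-index~$i$ by the uniform lower bound~$d_\cK(i,j)\geq\alpha t/3$ together with an elementary count of the leftover high-index clusters.
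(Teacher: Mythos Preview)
Your overall architecture---regularise $H\setminus(V(P_A)\cup\cR)$, transfer the pair degree condition to a reduced $3$-graph~$\cK$, find an almost perfect matching in~$\cK$, cover each matched triad by long tight paths, and splice through~$\cR$---matches the paper's proof exactly. The difficulty you locate is also the right one.

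The gap is in your proposed resolution of the matching step. You claim that on the sub-$3$-graph of~$\cK$ induced on the high-index clusters (indices $>t/2$), the minimum pair degree is ``genuinely at least half the number of vertices''. That is false. For $i,j>t/2$ you have $d_{\cK}(i,j)\geq t/2+\alpha t/3$, but this counts neighbours in all of~$[t]$. There are $t/2$ low-index clusters, so after restricting to the high-index set (which itself has only $t/2$ vertices) the pair degree can drop to as little as $\alpha t/3$. That is a $2\alpha/3$ fraction of the ambient vertex set, nowhere near the $1/2$ (or even the $1/3$) needed to invoke a standard near-perfect matching result. Consequently your first phase does not go through, and without it the greedy second phase has nothing to extend. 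The greedy extension itself is also underspecified: knowing $d_{\cK}(i,j)\geq\alpha t/3$ for a low-index~$i$ and arbitrary~$j$ does not by itself produce an edge of~$\cK$ containing~$i$ and two \emph{unused high-index} clusters.

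The paper handles the matching step with a bespoke lemma (its Lemma~6.4) whose proof exploits the Pósa-type shape of the condition directly. One takes a maximum matching~$M$ that, among all maximum matchings, is $\prec$-maximal for the lexicographic order on the leftover set~$[t]\setminus V(M)$. If a leftover pair~$v_iw_i$ with $v_i<w_i\leq t/2$ has a neighbour~$a$ in a matching edge~$abc$ with $b,c>v_i$, one can swap $abc$ for $av_iw_i$ and obtain a lexicographically larger leftover set, a contradiction; hence every matching edge containing exactly one neighbour of $v_iw_i$ also contains some vertex $\leq v_i$. Since there are only $v_i$ such vertices and $d_{\cK}(v_i,w_i)\geq v_i+\Omega(t)$, one forces $\Omega(t)$ matching edges to contain \emph{two} neighbours of $v_iw_i$, and a short second argument shows these edge-sets are disjoint across different leftover pairs, yielding a contradiction if the leftover set is large. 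This lexicographic-extremal idea is what your two-phase scheme is missing.
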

See Section \ref{sec_long_path} for the proof of Proposition \ref{long_path_lemma}.

Now we are ready to prove our main result, Theorem \ref{main_theorem} (see also Figure~\ref{fig: overview}).

\begin{proof}[Proof of Theorem \ref{main_theorem}]
	Let~$\alpha,\vartheta>0$ and~$n,L\in\mathds{N}$ such that~$1/n\ll\vartheta\ll 1/L\ll\alpha$. 
	Now let~${H=\left( [n],E\right)}$ be a~$3$-graph satisfying the degree condition~$d(i,j)\geq \min\left( i,j,\frac{n}{2}\right) + \alpha n$ for all~${ij \in [n]^{(2)}}$.
	Lemmas \ref{reservoir_lemma}, \ref{absorbing_path_lemma}, and Proposition \ref{long_path_lemma} provide a reservoir~$\mathcal{R}$, an absorbing path~$P_A\subseteq H\setminus\mathcal{R}$ and a long path~$Q\subseteq H\setminus P_A$ with~$\left\vert\mathcal{R}\cap V(Q)\right\vert \leq \vartheta ^4 n$.
	Let~$(a,b)$ and~$(c,d)$ be the end-pairs of~$P_A$ and let~$(r,s)$ and~$(t,u)$ be the end-pairs of~$Q$ (note that they are disjoint since we have~$Q\subseteq H\setminus P_A$).
	Since~$\left\vert\mathcal{R}\cap V(Q)\right\vert \leq \vartheta ^4 n$ and~$P_A\subseteq H\setminus\mathcal{R}$ and by Lemma~\ref{reservoir_preservation_lemma}, we can choose a path~$P_1$ of length~$L$ connecting~$(t,u)$ and~$(a,b)$ with all internal vertices in~$\mathcal{R}\setminus \left(V(Q)\cup V\left(P_A\right)\right)$ and, by the hierarchy of constants, we also find a path~$P_2$ of length~$L$ connecting~$(c,d)$ and~$(r,s)$ with all internal vertices in~$\mathcal{R}\setminus \left( V(Q)\cup V\left(P_A\right)\cup V\left( P_1\right)\right)$.
	That leaves us with a cycle~$C$ in~$H$ which satisfies~$v(C) \geq \left( 1-2\vartheta ^2\right) n$ and~$P_A \subseteq C$.
	The absorbing property of~$P_A$ guarantees that for~$X:=[n] \setminus V(C)$, there exists a path~$P_A'$ with~$V\left( P_A'\right) = V\left( P_A\right) \cup X$ which has the same end-pairs as~$P_A$ (which are connected to~$Q$) and hence there is a Hamiltonian cycle in~$H$.
\end{proof}

\begin{figure}
	\centering
	\begin{tikzpicture}[scale=0.8]
	
	\coordinate (a) at (0,0);
	\coordinate (b) at (0,1);
	
	\coordinate (c) at (0,5);
	\coordinate (d) at (0,6);
	
	\coordinate (r) at (0,8);
	\coordinate (s) at (1,8);
	
	\coordinate (t) at (15,8);
	\coordinate (u) at (16,8);
	\coordinate (rest) at (5,5);
	\coordinate (reservoir1) at (0,7);
	\coordinate (reservoir2) at (8,4);
	
	\begin{pgfonlayer}{front}
	
	\foreach \a in {a, b, c, d, r, s, t, u}
	\fill  (\a) circle (2pt);
	\node at (0.4,0) {$a$};
	\node at (0.4,1) {$b$};
	\node at (0.4,5) {$c$};
	\node at (0.4,6) {$d$};
	
	\node at (0,8.4) {$r$};
	\node at (1,8.4) {$s$};
	\node at (15,8.4) {$t$};
	\node at (16,8.4) {$u$};
	
	\node at (-0.4,3) {$P_A$};
	\node at (8,8.8) {$Q$};
	\node at (rest) {$\mathrm{leftover}$};
	\node at (10,4) {$\subseteq \mathcal{R}$};
	\node at (0.8,7) {$\subseteq \mathcal{R}$};
	
	\node at (2,3) {$\mathrm{absorbing}$};
	
	\end{pgfonlayer}
	
	\begin{pgfonlayer}{background}
	
	\draw[black!40!white, line width=2pt] (a) -- (b);
	\draw[black!40!white, line width=2pt] (c) -- (d);
	
	\draw[black!40!white, line width=2pt] (r) -- (s);
	\draw[black!40!white, line width=2pt] (t) -- (u);
	
	\draw[black!40!white, line width=2pt, decorate, decoration={snake,segment length=30,amplitude=6}] (b) -- (c);
	\draw[black!40!white, line width=2pt, decorate, decoration={snake,segment length=30,amplitude=6}] (s) -- (t);
	
	\draw[yellow!100!white, line width=2pt, decorate, decoration={snake,segment length=20,amplitude=4}] (d) -- (r);
	\draw[yellow!100!white, line width=2pt, decorate, decoration={snake,segment length=30,amplitude=6}] (a) -- (u);
	
	\draw (rest) circle (1.2 cm);
	\draw[blue!75!black, line width=1pt] (reservoir1) ellipse (5pt and 0.9cm);
	\draw[rotate around={-63.5:(reservoir2)},blue!75!black, line width=1pt] (reservoir2) ellipse (12pt and 8.8cm);
	
	\draw (0.2,2) .. controls (2,3) .. (4.4,4.4);
	\draw (0,3.8) .. controls (2,3) .. (4.4,4.4);
	
	\end{pgfonlayer}		
	
	\end{tikzpicture}
	\caption{Overview of the proof}\label{fig: overview}
\end{figure}

\section{Connecting Lemma}\label{sec_connecting_lemma}

Before we start with the actual proof of Lemma \ref{connecting_lemma}, let us take a look at the strategy.
Say, we want to connect two (ordered) pairs $(x,y)$ and $(w,z)$ in a hypergraph~$H$ satisfying the condition in Theorem~\ref{main_theorem}.
One can easily reduce the case of both pairs being arbitrary to that of both having pair degree at least~$\frac{n}{2} + \alpha n$ by ``climbing up'' in the degree sequence (see the beginning of the proof).
Then~$N\left((x,y),(w,z)\right)$, the set of common neighbours of~$(x,y)$ and~$(w,z)$, is non-empty because of the high pair degrees of~$(x,y)$ and~$(w,z)$. 
If we were able to find many ($2$-uniform)~$y$-$w$-paths in the link graphs of elements in~$N\left((x,y),(w,z)\right)$, we could subsequently insert the elements of~$N\left((x,y),(w,z)\right)$ at every third position of such a path, thereby obtaining a~$3$-uniform walk. 

So we could indeed connect two pairs if the link graphs of vertices in~$N\left((x,y),(w,z)\right)$ would inherit the right degree condition, i.e., if the vertices would be large (regarded as elements of~$\mathds{N}$).
However, since we cannot control how large the elements in~$N\left((x,y),(w,z)\right)$ are, the degree condition that the link graphs of vertices in~${N\left((x,y),(w,z)\right)}$ inherit may not be strong enough to let us connect two vertices by ``climbing up'' the degree sequence.
The idea to insert a middle pair~$(a,b)$, as done in~\cite{5/9}, overcomes this problem.
If~$(a,b)$ has some large common neighbours with~$(x,y)$ and some with~$(w,z)$, we can find enough~$(x,y)$-$(w,z)$ walks passing through~$(a,b)$ by applying the strategy explained above (now we can connect vertices in the link graphs by ``climbing up'' the degree sequence).
The number of those walks will depend on the number of large common neighbours that~$(a,b)$ has with each~$(x,y)$ and~$(w,z)$.
So roughly speaking, if the sum over all~$(a,b)$ of large common neighbours of~$(a,b)$ and~$(x,y)$ and of~$(a,b)$ and~$(w,z)$ is large, we can indeed prove the Connecting Lemma.
This last point (in its accurate form) will follow from the observation that two link graphs of large vertices have many common edges.

Note that this strategy can be used in the seemingly different settings of our pair degree condition and the minimum vertex degree condition in~\cite{5/9}, since in both cases we have ``well connected'' subgraphs in every link graph and each two of these subgraphs intersect in many edges: In~\cite{5/9} those subgraphs are the \textit{robust subgraphs} and in our case we can just consider the link graphs of large vertices.
After the first version of this article, this idea has also been used extensively in~\cite{tyh}.
\begin{proof}[Proof of Lemma \ref{connecting_lemma}]
	Observe that when we show that there exists an~$L\in\mathds{N}$ and a~$\vartheta>0$ such that the statement of Lemma~\ref{connecting_lemma} holds for these, it easily follows that it holds for all~$L\in\mathds{N}$ and~$\vartheta>0$ with~$1/n\ll\vartheta\ll 1/L\ll\alpha\ll1$.
	Hence, let the hierarchy and~$H$ be given as described in the lemma and let~$(x,y),(w,z)\in [n]^2$ be two disjoint ordered pairs of distinct vertices.
	
	First, we will show that it is possible to ``climb up'' along the degree sequence in (compared to~$n$) few steps, starting from the pairs $(x,y)$ and $(w,z)$ and ending with pairs of vertices~$\geq \frac{n}{2}$.
	
	In the second step, we will connect these two by utilising an analogous ``climb up'' argument in the link graphs of neighbours of a pair and slipping in an additional connective pair.
	We first look for walks rather than paths and conclude by remarking that many of them will actually be paths.
	
	\subsection*{First Step}
	%
	%
	%
	%
	%
	%
	%
	%
	%
	%
	%
	%
	%
	%
	
	By induction on $\ell \geq 3$, we will prove the following statement: There exist at least~${\left( \frac{\alpha}{5} \right) ^{\ell-2} n^{\ell-2}}$ walks $x_1=x, x_2=y, x_3, \dots , x_{\ell}$ such that for $i\geq 3$ we have: 
	\begin{align}\label{climb_up_cond}
	x_i \geq \min \left( \frac{\alpha}{4}n(i-2),\frac{n}{2}\right) +\frac{\alpha}{4}n
	\end{align}
	We will first show the statement for~$\ell =3$ and~$\ell = 4$ and then deduce it for any~$\ell \geq 5$ given that it holds for~$\ell -1$.
	
	$\ell=3:$ By the degree condition on $H$ we have $d(x,y) \geq \min \left( 1,2, \frac{n}{2}\right)+\alpha n$. Hence, there exist at least $\frac{\alpha}{5}n$ possible vertices $x_3$ such that $x_1,x_2,x_3$ is a walk and $x_3 \geq \frac{\alpha}{4}n+\frac{\alpha}{4}n$.
	
	$\ell=4:$ Let $x_1,x_2,x_3$ be one of those $\frac{\alpha}{5} n$ walks satisfying the condition (\ref{climb_up_cond}) that we get by the previous case. We then have $d(x_2,x_3) \geq \min \left(1,\frac{\alpha}{2}n, \frac{n}{2}\right)+\alpha n$, so there exist at least~$\frac{\alpha}{5}n$ possible vertices~$x_4$ such that~$x_1,x_2,x_3,x_4$ is a walk and~$x_i \geq \frac{\alpha}{4}n(i-2)+\frac{\alpha}{4}n$ for~$i=3,4$.
	
	$\ell\geq 5:$ Let $x_1,x_2,x_3, \dots ,x_{\ell-1}$ be one of the $\left( \frac{\alpha}{5} \right) ^{\ell-3} n^{\ell-3}$ walks satisfying, for~$i\geq 3$, $$x_i \geq \min \left( \frac{\alpha}{4}n(i-2),\frac{n}{2}\right) +\frac{\alpha}{4}n$$ that we get by induction.
	Then our pair degree condition entails $$d(x_{\ell-2},x_{\ell-1}) \geq \min \left( \frac{\alpha}{4}n(\ell-4)+\frac{\alpha}{4}n, \frac{n}{2}\right)+\alpha n$$ which in turn gives rise to at least $\frac{\alpha}{5}n$ possible vertices $x_{\ell}$ such that $x_1,x_2,\dots, x_{\ell}$ build a walk and we have~${x_i \geq \min\left(\frac{\alpha}{4}n(i-2),\frac{n}{2}\right)+\frac{\alpha}{4}n}$ for all~${i \in [\ell], i \geq 3}$.
	
	This leaves us with~$\left( \frac{\alpha}{5}\right) ^{ \frac{2}{\alpha}} n^{\frac{2}{\alpha}}$ possibilities for walks $$x_1=x, x_2=y, x_3, \dots , x_{\frac{2}{\alpha} +2}$$ with $x_{\frac{2}{\alpha}+1},x_{\frac{2}{\alpha} +2} \geq \frac{n}{2}$ and an analogous argument for $(w,z)$ with just as many possibilities for walks $$z_1=z, z_2=w, z_3, \dots , z_{ \frac{2}{\alpha}+2}$$ with $z_{\frac{2}{\alpha} +1},z_{ \frac{2}{\alpha} +2} \geq \frac{n}{2}$.

	\subsection*{Second Step}
	\begin{figure}
		\centering
		\begin{tikzpicture}[scale=1.05]
		
		\coordinate (x') at (0,0);
		\coordinate (y') at (0.9,0);
		
		\coordinate (r1) at (1.8,0.8);
		\coordinate (r2) at (2.6,1.7);
		\coordinate (r3) at (3.2,2.7);
		\coordinate (r4) at (3.7,3.8);
		
		\coordinate (r6) at (4.4,5.7);
		\coordinate (r7) at (5.1,6.5);

		\coordinate (a) at (6,7);
		\coordinate (b) at (7,7);
		
		\coordinate (s1) at (7.9,6.5);
		\coordinate (s2) at (8.6,5.7);
		\coordinate (s3) at (9.0,4.6);
		\coordinate (s4) at (9.5,3.3);
		
		\coordinate (s6) at (10.4,1.7);
		\coordinate (s7) at (11.2,0.8);

		\coordinate (w') at (12.1,0);
		\coordinate (z') at (13,0);
		
		\coordinate (u1) at (0.7,3.3);
		\coordinate (u2) at (1.8,5.7);
		\coordinate (u9) at (2.9,8.1);
		
		\coordinate (v1) at (10.1,8.1);
		\coordinate (v2) at (11.2,5.7);
		\coordinate (v9) at (12.3,3.3);

		\begin{pgfonlayer}{front}
		
		\foreach \i in {x', y', w', z', a, b, r1, r2, r3, r4, r6, r7, s1, s2, s3, s4, s6, s7}
		\fill  (\i) circle (2pt);
		
		\foreach \i in {u1, u2, u9, v1, v2, v9}{
			\draw[blue!75!black, very thick]  (\i) circle (2pt);
			\fill[blue!75!white]  (\i) circle (2pt);
		}
		
		\node at (0,-0.4) {\textcolor{black!60!black}{$x'$}};
		\node at (0.9,-0.43) {\textcolor{black!60!black}{$y'$}};
		\node at (6,6.58) {
			{$a$}};
		\node at (7,6.63) {
			{$b$}};
		\node at (12.1,-0.4) {\textcolor{black!60!black}{$w'$}};
		\node at (13,-0.4) {\textcolor{black!60!black}{$z'$}};
		
		\node at (2.1,0.5) {$r_1$};
		\node at (10.9,0.5) {$s_{m}$};
		
		\node at (2.9,1.4) {$r_2$};
		\node at (10,1.4) {$s_{m-1}$};
		
		\node at (4.9,5.5) {$r_{m-1}$};
		\node at (8.3,5.5) {$s_{2}$};
		
		\node at (5.6,6.2) {$r_{m}$};
		\node at (7.6,6.2) {$s_{1}$};
		
		\node at (3.6,2.5) {$r_3$};
		\node at (4.1,3.7) {$r_4$};
		
		\node at (8.7,4.4) {$s_3$};
		\node at (9.2,3.1) {$s_4$};
		
		\node at (1.0,6.5) {\textcolor{blue!75!black}{$U_{x'y'}$}};
		\node at (0.85,3.7) {\footnotesize \textcolor{blue!75!black}{$u_{i(1)}$}};
		\node at (1.95,6.1) {\footnotesize \textcolor{blue!75!black}{$u_{i(2)}$}};
		\node at (1.9,8.15) {\footnotesize \textcolor{blue!75!black}{$u_{i\left(\frac{m}{2}+1\right)}$}};
		
		\node at (12.0,6.5) {\textcolor{blue!75!black}{$U_{w'z'}$}};
		\node at (10.9,8.25) {\footnotesize \textcolor{blue!75!black}{$v_{j(1)}$}};
		\node at (11.25,6.1) {\footnotesize \textcolor{blue!75!black}{$v_{j(2)}$}};
		\node at (12.1,3.75) {\footnotesize \textcolor{blue!75!black}{$v_{j\left(\frac{m}{2}+1\right)}$}};
		
		\end{pgfonlayer}
		
		\begin{pgfonlayer}{background}
		\draw[black!40!white, line width=2pt] (x') -- (y');
		\draw[black!40!white, line width=2pt] (a) -- (b);
		\draw[black!40!white, line width=2pt] (w') -- (z');
		
		\draw[black!40!white, line width=2pt] (y') -- (r1);
		\draw[black!40!white, line width=2pt] (r1) -- (r2);
		\draw[black!40!white, line width=2pt] (r2) -- (r3);
		\draw[black!40!white, line width=2pt] (r3) -- (r4);
		
		\draw[black!40!white, line width=2pt, decorate, decoration={snake,segment length=9,amplitude=4}] (r4) -- (r6);
		
		\draw[black!40!white, line width=2pt] (r6) -- (r7);
		\draw[black!40!white, line width=2pt] (r7) -- (a);
		
		\draw[black!40!white, line width=2pt] (b) -- (s1);
		\draw[black!40!white, line width=2pt] (s1) -- (s2);
		\draw[black!40!white, line width=2pt] (s2) -- (s3);
		\draw[black!40!white, line width=2pt] (s3) -- (s4);
		
		\draw[black!40!white, line width=2pt, decorate, decoration={snake,segment length=9,amplitude=4}] (s4) -- (s6);
		
		\draw[black!40!white, line width=2pt] (s6) -- (s7);
		\draw[black!40!white, line width=2pt] (s7) -- (w');
		\end{pgfonlayer}
		
		\draw[rotate around={-26:(u2)},blue!75!black, line width=2pt] (u2) ellipse (18pt and 3.4cm);
		\fill[blue!75!white,opacity=0.2,rotate around={-26:(u2)}] (u2) ellipse (18pt and 3.4cm);
		
		\draw[rotate around={26:(v2)},blue!75!black, line width=2pt] (v2) ellipse (18pt and 3.4cm);
		\fill[blue!75!white,opacity=0.2,rotate around={26:(v2)}] (v2) ellipse (18pt and 3.4cm);
		
		\qedge{(u1)}{(y')}{(x')}{4.5pt}{1.5pt}{red!70!black}{red!70!black,opacity=0.2};
		\qedge{(u1)}{(r1)}{(y')}{4.5pt}{1.5pt}{red!70!black}{red!70!black,opacity=0.2};
		\qedge{(u1)}{(r2)}{(r1)}{4.5pt}{1.5pt}{red!70!black}{red!70!black,opacity=0.2};
		
		\qedge{(u2)}{(r2)}{(r1)}{4.5pt}{1.5pt}{red!70!black}{red!70!black,opacity=0.2};
		\qedge{(u2)}{(r3)}{(r2)}{4.5pt}{1.5pt}{red!70!black}{red!70!black,opacity=0.2};
		\qedge{(u2)}{(r4)}{(r3)}{4.5pt}{1.5pt}{red!70!black}{red!70!black,opacity=0.2};
		
		\qedge{(u9)}{(r7)}{(r6)}{4.5pt}{1.5pt}{red!70!black}{red!70!black,opacity=0.2};
		\qedge{(u9)}{(a)}{(r7)}{4.5pt}{1.5pt}{red!70!black}{red!70!black,opacity=0.2};
		\qedge{(u9)}{(b)}{(a)}{4.5pt}{1.5pt}{red!70!black}{red!70!black,opacity=0.2};
		
		\qedge{(v1)}{(b)}{(a)}{4.5pt}{1.5pt}{red!70!black}{red!70!black,opacity=0.2};
		\qedge{(v1)}{(s1)}{(b)}{4.5pt}{1.5pt}{red!70!black}{red!70!black,opacity=0.2};
		\qedge{(v1)}{(s2)}{(s1)}{4.5pt}{1.5pt}{red!70!black}{red!70!black,opacity=0.2};
		
		\qedge{(v2)}{(s2)}{(s1)}{4.5pt}{1.5pt}{red!70!black}{red!70!black,opacity=0.2};
		\qedge{(v2)}{(s3)}{(s2)}{4.5pt}{1.5pt}{red!70!black}{red!70!black,opacity=0.2};
		\qedge{(v2)}{(s4)}{(s3)}{4.5pt}{1.5pt}{red!70!black}{red!70!black,opacity=0.2};
		
		\qedge{(v9)}{(z')}{(w')}{4.5pt}{1.5pt}{red!70!black}{red!70!black,opacity=0.2};
		\qedge{(v9)}{(w')}{(s7)}{4.5pt}{1.5pt}{red!70!black}{red!70!black,opacity=0.2};
		\qedge{(v9)}{(s7)}{(s6)}{4.5pt}{1.5pt}{red!70!black}{red!70!black,opacity=0.2};
		
		\end{tikzpicture}
		\caption{Idea of the second step, the picture is similar to \cite[Fig. 4.1]{5/9}}
	\end{figure}
	
	Let $m$ be the smallest even number~${\geq \frac{1}{\alpha} + 1}$.
	It now suffices to show that for some~$\vartheta'>0$ with~$1/n\ll \vartheta ' \ll \alpha$ we have the following.
	For all ordered pairs $(x',y'),(w',z')\in[n]^{2}$ for which the vertices within each pair are distinct and~${x',y',w',z'\geq \frac{n}{2}}$, the number of~$(x',y')$-$(w',z')$ walks with~$3m+4$ internal vertices is at least~${\vartheta ' n^{3m+4}}$.
	
	Since~$d(x',y')\geq\frac{n}{2} + \alpha n$, there exists a set~$U_{x'y'}= \left\{u_1, \dots, u_{\alpha n}\right\} \subseteq [n]\setminus [n/2]$ such that~$x'y' \in E\left( L_{u_i} \right)$, for all~$i \in [\alpha n]$ (recall that~$L_{u_i}$ denotes the link graph of~$u_i$).
	Similarly, there exists~$U_{w'z'}=\left\{v_1, \dots, v_{\alpha n}\right\} \subseteq [n]\setminus[n/2]$ such that~$w'z' \in E\left( L_{v_i} \right)$, for all~$i \in [ \alpha n]$.
	
	For $(a,b) \in [n]^2$, let~$I_{ab} = \left\{i\in \left[\alpha n \right] : ab \in E\left(L_{u_i}\right) \cap E\left(L_{v_i}\right) \right\}$. 
	Since all vertices~$\geq \frac{n}{2}$ (apart from~$u_i, v_i$) have in both~$L_{u_i}$ and~$L_{v_i}$ at least~$\frac{n}{2} + \alpha n$ neighbours, and therefore~$2\alpha n$ vertices that they are adjacent to in both~$L_{u_i}$ and~$L_{v_i}$, there are at least~$\frac{\alpha n^2}{4}$ edges in~$L_{v_i}\cap L_{u_i}$. 
	Thus, by double counting we have 
	$$\sum_{\mathclap{(a,b) \in [n]^2}} \vert I_{ab} \vert \geq \sum_{i \in \left[\alpha n \right]} \left \vert E\left(L_{v_i}\right) \cap E\left(L_{u_i}\right) \right \vert \geq \frac{\alpha n^2}{4} \alpha n\,.$$
	
	Next, for fixed $(a,b)\in [n]^2$, we find a lower bound on the number $L_{ab}$ of~$3$-uniform walks of the form
	$$x'y'u_{i(1)}r_1r_2u_{i(2)}\dots u_{i\left(\frac{m}{2}\right)}r_{m-1}r_m u_{i\left(\frac{m}{2}+1\right)}ab$$
	where $y'r_1r_2\dots r_{m-1}r_ma$ is a~$2$-uniform walk in $L_{u_{i(k)}}$ and $i(k) \in I_{ab}$, for all $k \in \left[\frac{m}{2}+1\right]$.
	
	To this goal, first observe that for all~$i\in [ \alpha n]$, the number of~$y'a$-walks of length~$m+1$ in~$L_{u_i}$ is at least~$\left(\frac{\alpha}{3}\right) ^m n^m$.
	Indeed, since~$u_i \geq n/2$, we know that for~$j\in[n]$, we have~$d_{L_{u_i}}(j)\geq \min \left(j, \frac{n}{2}\right)+\alpha n$. 
	Therefore, there are at least $\left(\frac{\alpha n}{2}\right) ^{m-1}$ walks of length~$m-1$ starting in~$a$ in which each vertex is either at least~$ \frac{n}{2} +\frac{\alpha n}{2}$ or at least~$\frac{\alpha n}{2}$ larger than the preceding vertex.
	Since we set~${m\geq 1/\alpha +1}$, each of these walks ends in a vertex $\geq \frac{n}{2}$ and for at least~$\left(\frac{\alpha n}{3}\right)^{m-1}$ of them the last vertex is distinct from~$y'$.
	For each such walk~$T$ with its last vertex~$a_T'\neq y'$, there are~$2\alpha n$ possibilities for common neighbours of~$y'$ and~$a_T'$ (note that the degrees in~$L_{u_i}$ of both~$y'$ and~$a_T'$ are at least~$\frac{n}{2} +\alpha n$).
	In total, that gives us at least~$\left(\frac{\alpha n}{3}\right) ^m$ $y'a$-walks of length~$m+1$ in~$L_{u_i}$.
	
	Now for $\vec r \in [n]^m$, we set $D_{ab}\left(\vec r\right) := \left \{ i \in I_{ab}: y'\vec r a \text{ is a walk in }L_{u_i}\right \}$.
	Again by double counting and by the previous observation we infer
	$$\sum_{\mathclap{\vec r \in [n]^m}} \left\vert D_{ab}\left(\vec r\right)\right\vert=\sum_{\mathclap{i\in I_{ab}}}\left\vert\left\{\vec r \in [n]^m: y'\vec r a \text{ is a walk in }L_{u(i)}\right\}\right\vert \geq \left\vert I_{ab}\right\vert\left(\frac{\alpha}{3}\right)^mn^m.$$
	Note that for each~$\vec r \in [n]^m$ that is a~$y'a$-walk in~$L_{u_{i(k)}}$ for every $k\in \left[\frac{m}{2}+1\right]$, we have that $$x'y'u_{i(1)}r_1r_2u_{i(2)}\dots u_{i\left(\frac{m}{2}\right)}r_{m-1}r_mu_{i\left(\frac{m}{2}+1\right)}ab$$ is a~$3$-uniform~$(x'y')$-$(ab)$-walk of length~$m+\frac{m}{2}+3$ in~$H$. Hence, with Jensen's inequality we derive:
	$$L_{ab} \geq \sum_{\mathclap{\vec{r} \in [n]^m}}\left\vert D_{ab}\left(\vec r\right) \right\vert ^{\frac{m}{2}+1} \geq n^m\left(\sum \frac{1}{n^m}\left\vert D_{ab}\left(\vec r\right)\right\vert\right)^{\frac{m}{2}+1} \geq n^m \left(\left\vert I_{ab} \right\vert \left(\frac{\alpha}{3}\right)^m\right)^{\frac{m}{2}+1}.$$
	We define $R_{ab}$ analogously as the number of~$3$-uniform walks of the form
	$$abv_{j(1)}s_1s_2v_{j(2)}\dots v_{j\left(\frac{m}{2}\right)}s_{m-1}s_m v_{j\left(\frac{m}{2}+1\right)}w'z'\,,$$
	where $bs_1s_2\dots s_{m-1}s_mw'$ is a~$2$-uniform walk in~$L_{v_{j(k)}}$ and~$j(k) \in I_{ab}$, for all~$k \in \left[\frac{m}{2}+1\right]$, and get the same lower bound by an analogous argument.
	
	At last, let~$W$ be the number of~$(x'y')\text{-}(w'z')\text{-walks of length }3m+6\text{ in }H$.
	We apply Jensen's inequality a second time to obtain:
	\begin{align*}
	W\geq & \sum_{\mathclap{(a,b)\in [n]^2}} L_{ab}R_{ab} \\
	\geq & n^{2m}\left(\frac{\alpha}{3}\right)^{m^2+2m} \sum_{\mathclap{(a,b)\in [n]^2}} \left\vert I_{ab} \right\vert ^{m+2} \\
	\geq & n^{2m}\left(\frac{\alpha}{3}\right)^{m^2+2m}n^2\left(\frac{1}{n^2}\frac{\alpha^2 n^3}{4}\right)^{m+2} \\
	\geq & \left(\frac{\alpha}{3}\right) ^{m^2+2m}\left(\frac{\alpha ^2}{4}\right)^{m+2} n^{3m+4} \\
	\geq & \left(\frac{\alpha ^2}{4}\right) ^{m^2+3m+2} n^{3m+4}.
	\end{align*}
	In total, putting together the walks connecting~$(x,y)$ and~$(x',y')$, $(x',y')$ and~$(w',z')$ and~$(w',z')$ and~$(w,z)$ we get that the number of~$(x,y)$-$(w,z)$-walks of length~$2 \cdot\frac{2}{\alpha} + 3m+6$ in~$H$ is at least
	$$\left(\left( \frac{\alpha}{5}\right) ^{\frac{2}{\alpha} } n^{\frac{2}{\alpha}}\right)^2 \times \left(\frac{\alpha ^2}{4}\right) ^{m^2+3m+2} n^{3m+4}\geq \alpha^{m^3}n^{\frac{4}{\alpha} +3m+4}\,.$$
	
	Since only~$\mathcal{O}\left(n^{\frac{4}{\alpha} +3m+3}\right)$ of these fail to be a path, we are done.
\end{proof}

\section{Reservoir}\label{sec_reservoir}
In this section, we will prove the existence of a small set, the reservoir, such that any two pairs of vertices can be connected by paths with all internal vertices lying in the reservoir. The probabilistic proof of this lemma as done in~\cite{5/9} works in almost the same way with different conditions as soon as the Connecting Lemma is provided. We will state two inequalities first that we will need for the probabilistic method.
\begin{lemma}[Chernoff, see for instance Cor. 2.3 in \cite{probabilistic1}]\label{chernoff_ineq}
	Let $X_1, X_2, \dots, X_m$ be a sequence of~$m$ independent random variables $X_i: \rightarrow \{0,1\}$ with $\mathbb{P}\left( X_i = 1\right) = p$ and~${\mathbb{P}\left( X_i = 0\right) = 1-p}$. Then we have for $\delta \in (0,1)$:
	\begin{itemize}
		\item $\mathbb{P}\left( \sum_{i\in [m]} X_i \geq \left( 1+\delta\right) pm\right) \leq \exp\left(-\frac{\delta ^2}{3}pm\right)$
		\item $\mathbb{P}\left( \sum_{i\in [m]} X_i \leq \left( 1-\delta\right) pm\right) \leq \exp\left(-\frac{\delta ^2}{2}pm\right)$
	\end{itemize}
\end{lemma}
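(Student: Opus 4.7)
The plan is to prove both tail bounds via the standard exponential moment method (also called the Chernoff--Cram\'er method), applied to the partial sum $S_m := \sum_{i\in[m]} X_i$. The key observation is that for any $\lambda \in \mathds{R}$ and any threshold $t$, Markov's inequality applied to the non-negative random variable $e^{\lambda S_m}$ gives $\PP(S_m \geq t) \leq e^{-\lambda t}\,\EE\bigl[e^{\lambda S_m}\bigr]$ when $\lambda>0$, and the analogous bound $\PP(S_m \leq t) \leq e^{-\lambda t}\,\EE\bigl[e^{\lambda S_m}\bigr]$ when $\lambda<0$. Independence of the $X_i$ lets me factor the moment generating function as $\EE\bigl[e^{\lambda S_m}\bigr]=\prod_{i\in[m]}\EE\bigl[e^{\lambda X_i}\bigr]$, and for a single Bernoulli variable I would use the convexity estimate $\EE[e^{\lambda X_i}] = 1-p+pe^{\lambda} \leq \exp\bigl(p(e^{\lambda}-1)\bigr)$, yielding the clean bound $\EE\bigl[e^{\lambda S_m}\bigr] \leq \exp\bigl(pm(e^{\lambda}-1)\bigr)$.

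For the upper tail I would take $t=(1+\delta)pm$ and optimise in $\lambda>0$, the optimum being $\lambda = \ln(1+\delta)$. Plugging this in gives the classical ratio form
\[
\PP\bigl(S_m \geq (1+\delta)pm\bigr) \;\leq\; \left(\frac{e^{\delta}}{(1+\delta)^{1+\delta}}\right)^{pm}.
\]
The last step is an elementary calculus inequality: for $\delta \in (0,1)$, one has $\delta - (1+\delta)\ln(1+\delta) \leq -\delta^2/3$, which I would verify by comparing Taylor expansions or by checking that the difference of the two sides, viewed as a function of $\delta$, vanishes at $0$ and has the correct sign of its derivative on $(0,1)$. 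Taking the $pm$-th power then produces the first bound $\exp(-\delta^2 pm/3)$.

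For the lower tail I would run the symmetric argument with $t=(1-\delta)pm$ and $\lambda = \ln(1-\delta)<0$, so that $e^{-\lambda t}\EE[e^{\lambda S_m}]$ is once again minimised at the analogous value. This yields
\[
\PP\bigl(S_m \leq (1-\delta)pm\bigr) \;\leq\; \left(\frac{e^{-\delta}}{(1-\delta)^{1-\delta}}\right)^{pm},
\]
and I would conclude by the slightly sharper elementary inequality $-\delta - (1-\delta)\ln(1-\delta) \leq -\delta^2/2$ valid on $(0,1)$, which explains why the denominator in the exponent for the lower tail is~$2$ rather than~$3$.

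I do not expect any real obstacle here: the entire proof is a textbook template, and the only slightly delicate pieces are the two analytic inequalities $\delta-(1+\delta)\ln(1+\delta)\leq -\delta^2/3$ and $-\delta-(1-\delta)\ln(1-\delta)\leq-\delta^2/2$ on $(0,1)$, both of which are standard and can be verified in a few lines. Everything else (Markov, independence, the Bernoulli MGF bound $1-p+pe^{\lambda}\leq e^{p(e^{\lambda}-1)}$ which follows from $1+x\leq e^{x}$) is one-liner. In fact, since the paper cites this as Corollary~2.3 of \cite{probabilistic1}, I would likely just invoke the reference rather than reproduce the computation in the body of the paper.
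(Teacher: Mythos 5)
The paper does not prove this lemma: it is quoted as a known result with a pointer to Corollary~2.3 of the cited textbook by Janson, \L uczak, and Ruci\'nski, so there is no in-paper argument to compare against. Your proposed proof is the standard Chernoff--Cram\'er exponential-moment argument and is correct as outlined: Markov applied to $e^{\lambda S_m}$, factorisation of the moment generating function by independence, the bound $1-p+pe^{\lambda}\le\exp(p(e^{\lambda}-1))$, optimisation at $\lambda=\ln(1+\delta)$ (respectively $\lambda=\ln(1-\delta)$), and then the two elementary inequalities $\delta-(1+\delta)\ln(1+\delta)\le-\delta^2/3$ and $-\delta-(1-\delta)\ln(1-\delta)\le-\delta^2/2$ on $(0,1)$, both of which do hold (the first can be checked by noting $f(\delta)=\delta-(1+\delta)\ln(1+\delta)+\delta^2/3$ satisfies $f(0)=f'(0)=0$ and $f'(1)<0$ with $f''$ changing sign only once, so $f'\le 0$ on $(0,1)$; the second is even simpler since the corresponding $g$ has $g''\le 0$ throughout). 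Your closing remark that in practice one simply invokes the reference is exactly what the paper does.
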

\begin{lemma}[Azuma-Hoeffding, McDiarmid, Cor.~2.27 in \cite{probabilistic1} and Thm.~1 in \cite{probabilistic2}]\label{azuma_hoeffding_ineq}
	Suppose that~${X_1, \dots , X_m}$ are independent random variables taking values in~${\Lambda_1, \dots , \Lambda_m}$ and let~${f:\Lambda_1 \times \dots \times \Lambda_m\rightarrow \mathbb{R}}$ be a measurable function. Moreover, suppose that for certain real numbers $c_1, \dots , c_m \geq 0$, we have that
	if~${J,J'\in \prod \Lambda_i}$ differ only in the $k$-th coordinate, then~${\left\vert f(J) - f\left( J'\right)\right\vert \leq c_k}$.
	Then the random variable $X := f\left( X_1, \dots , X_m\right)$ satisfies
	$$\mathbb{P}\left( \left\vert X - \mathbb{E}(X)\right\vert \geq t\right) \leq 2\exp\left(-\frac{2t^2}{\sum c_i^2}\right)$$
\end{lemma}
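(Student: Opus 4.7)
The plan is to prove this McDiarmid (bounded differences) inequality via Doob's martingale, reducing to a one-sided Chernoff-style argument and then dualising. First I would introduce the filtration $\mathcal{F}_k = \sigma(X_1,\dots,X_k)$ and the associated Doob martingale $Y_k := \mathbb{E}[X \mid \mathcal{F}_k]$ for $k=0,\dots,m$, so that $Y_0 = \mathbb{E}[X]$ and $Y_m = X$. The martingale differences are $D_k := Y_k - Y_{k-1}$, and their telescoping sum equals $X - \mathbb{E}[X]$. The first real step is to show that, conditionally on $\mathcal{F}_{k-1}$, the difference $D_k$ lies in some interval of length at most $c_k$. Using independence of the $X_i$, one can write $Y_k$ and $Y_{k-1}$ as integrals of $f$ with respect to the laws of $X_{k+1},\dots,X_m$, only differing in whether the $k$-th coordinate has been integrated out or is still exposed. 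The bounded difference hypothesis on $f$ then translates into: there exist $\mathcal{F}_{k-1}$-measurable bounds $A_k \leq D_k \leq B_k$ with $B_k - A_k \leq c_k$.

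Next I would apply the classical Hoeffding lemma: for any random variable $Z$ with $\mathbb{E}[Z \mid \mathcal{G}] = 0$ and $Z \in [A,B]$ almost surely (with $A,B$ $\mathcal{G}$-measurable), one has
\[
\mathbb{E}\bigl[e^{\lambda Z} \bigm| \mathcal{G}\bigr] \leq \exp\!\left(\tfrac{\lambda^2 (B-A)^2}{8}\right).
\]
This is a one-dimensional convexity argument (bounding $e^{\lambda z}$ by the chord through $(A,e^{\lambda A})$ and $(B,e^{\lambda B})$ and then estimating the resulting function of $p := -A/(B-A)$). Applied to $D_k$ with $\mathcal{G} = \mathcal{F}_{k-1}$, this gives $\mathbb{E}[e^{\lambda D_k} \mid \mathcal{F}_{k-1}] \leq \exp(\lambda^2 c_k^2 / 8)$.

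Now I would iterate using the tower property: writing $e^{\lambda (X-\mathbb{E}[X])} = \prod_{k=1}^m e^{\lambda D_k}$ and conditioning outward from $\mathcal{F}_m$ to $\mathcal{F}_0$ yields
\[
\mathbb{E}\bigl[e^{\lambda (X-\mathbb{E}[X])}\bigr] \leq \exp\!\left(\tfrac{\lambda^2}{8}\sum_{k=1}^m c_k^2\right).
\]
Combining this with Markov's inequality $\mathbb{P}(X - \mathbb{E}[X] \geq t) \leq e^{-\lambda t}\mathbb{E}[e^{\lambda(X-\mathbb{E}[X])}]$ and optimising the free parameter by choosing $\lambda = 4t/\sum c_k^2$ gives the one-sided bound $\exp(-2t^2/\sum c_k^2)$. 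Applying the identical argument to $-X$ (whose differences satisfy the same bounded-differences condition) and union bounding over the two tails produces the factor of $2$ in the claimed estimate.

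The routine parts are the martingale setup and the optimisation in $\lambda$; the only genuinely delicate step is the conditional Hoeffding lemma together with the verification that $A_k$ and $B_k$ are genuinely $\mathcal{F}_{k-1}$-measurable, which relies on Fubini for product measures applied to $f$ viewed as a function where the $k$-th coordinate is either free or integrated out. I expect this measurability/independence bookkeeping to be the main subtlety, since the hypothesis allows $\Lambda_i$ to be arbitrary measurable spaces rather than, say, finite sets where the argument simplifies considerably.
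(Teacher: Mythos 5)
Your proposal is correct: the Doob martingale decomposition, the conditional Hoeffding lemma giving $\mathbb{E}[e^{\lambda D_k}\mid\mathcal{F}_{k-1}]\leq\exp(\lambda^2c_k^2/8)$, the tower-property iteration, Markov with $\lambda=4t/\sum c_k^2$, and the union bound over the two tails together yield exactly the stated inequality. The paper does not prove this lemma at all --- it is quoted from Janson--\L{}uczak--Ruci\'nski and McDiarmid --- and your argument is precisely the standard proof given in those references, so there is nothing to compare beyond noting that your sketch matches the classical one.
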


We are now ready to prove Lemma \ref{reservoir_lemma}.

\begin{proof}[Proof of Lemma \ref{reservoir_lemma}]
	Let $\alpha$,~$L$,~$\vartheta$,~$n$, and~$H$ be given as in the statement. 
	We choose a random subset $\mathcal{R} \subseteq [n]$, where we select each vertex independently with probability $p=\left(1-\frac{1}{10L}\right) \vartheta ^2$.
	Since $\left\vert\mathcal{R}\right\vert$ is now binomially distributed, we can apply Chernoff's inequality (Lemma \ref{chernoff_ineq}) and utilise the hierarchy to obtain
	\begin{align}\label{reservoir_upper_bound}
	\mathbb{P}\left(\left\vert\mathcal{R}\right\vert < \vartheta ^2 n/2\right) \leq \mathbb{P}\left(\left\vert\mathcal{R}\right\vert < \frac{2}{3} \mathbb{E}\left(\mathcal{R}\right)\right) \leq \exp\left(-\frac{\left(1/3\right)^2}{2}pn\right) < 1/3 .
	\end{align}
	We also have $\vartheta ^2 n \geq \left( 1+c(L)\right) \mathbb{E}\left(\left\vert\mathcal{R}\right\vert\right)$ for some small $c(L) \in (0,1)$ not depending on~$n$ and therefore, again by Chernoff we get for large~$n$:
	\begin{align}\label{reservoir_lower_bound}
	\mathbb{P}\left(\left\vert\mathcal{R}\right\vert > \vartheta ^2 n\right) \leq \mathbb{P}\left(\left\vert\mathcal{R}\right\vert \geq \left(1+c(L)\right) \mathbb{E}\left(\mathcal{R}\right)\right) \leq \exp\left(-\frac{c(L)^2}{3}pn\right) < 1/3
	\end{align}
	By Lemma~\ref{connecting_lemma}, we have that for all disjoint ordered pairs of distinct vertices~$(x,y)$ and~$(w,z)$, the number of~$(x,y)$-$(w,z)$-paths of length~$L$ in~$H$ is at least~$\vartheta n^{L-2}$. 
	Let~$X=X\left((x,y),(w,z)\right)$ denote the random variable counting the number of those~$(x,y)$-$(w,z)$-paths in~$H$ that are of length~$L$ and have all internal vertices in~$\mathcal{R}$.
	We then have~$\mathbb{E}(X) \geq p^{L-2} \vartheta n^{L-2}$.
	
	Now we apply the Azuma-Hoeffding inequality (Lemma \ref{azuma_hoeffding_ineq}) (with $X_1,\dots , X_n$ being the indicator variables for the events ``$1\in \mathcal{R}$'',\dots ,``$n\in \mathcal{R}$'') which gives us, since the presence or absence of one particular vertex in $\mathcal{R}$ affects $X$ by at most~${\left(L-2\right) n^{L-3}}$, that
	\begin{align*}
	\mathbb{P}\left(X \leq \frac{2}{3} \vartheta (pn)^{L-2}\right) \leq & \mathbb{P}\left( X\leq \frac{2}{3} \mathbb{E}(X)\right) \\
	\leq & 2\exp\left( -\frac{2 \left( p^{L-2}\vartheta n^{L-2}\right) ^2}{9n\left( (L-2)n^{L-3}\right) ^2}\right) \\
	= &\exp\left( -\Omega (n)\right) .
	\end{align*}
	By the union bound, also the probability that there is a pairs of pairs for which the respective number of connecting paths with all internal vertices in~$\mathcal{R}$ is less than~${\frac{2}{3} \vartheta (pn)^{L-2}}$ can be bounded from above by 
	\begin{align}\label{reservoir_many_paths}
	\exp\left(-\Omega (n)\right) \times n^4 < 1/3
	\end{align}
	for~$n$ large.
	Moreover, recalling our hierarchy we have $$\frac{2}{3}\vartheta p^{L-2} n^{L-2} = \left(1-\frac{1}{10 L}\right)^{L-2} \frac{2}{3}\vartheta \left(\vartheta ^2 n\right)^{L-2}\geq \frac{\vartheta}{2}\left(\vartheta ^2 n\right) ^{L-2}$$ which together with (\ref{reservoir_lower_bound}) and (\ref{reservoir_many_paths}) implies the following: With probability $>1/3$ the chosen set~$\mathcal{R}$ satisfies $\vert\mathcal{R}\vert\leq \vartheta ^2 n$ and has the property that for all disjoint ordered pairs of distinct vertices $(x,y)$ and $(w,z)$ there exist at least~${\frac{\vartheta}{2}\left\vert\mathcal{R}\right\vert ^{L-2}}$ paths of length~$L$ in~$H$ that connect those pairs and have all their internal vertices in $\mathcal{R}$.
	Therefore, combining this with~(\ref{reservoir_upper_bound}) ensures that there indeed exists a version of~$\mathcal{R}$ that has all the required properties of our reservoir set.
\end{proof}

It is not hard now to show the preservation of the reservoir, Lemma \ref{reservoir_preservation_lemma}.
\begin{proof}[Proof of Lemma \ref{reservoir_preservation_lemma}]
	Let~$H, \mathcal{R}, \mathcal{R}'$ be as in the statement of the Lemma.
	Consider any two disjoint ordered pairs of distinct vertices $(x,y)$ and $(w,z)$.
	We have $$\left\vert\mathcal{R}'\right\vert \leq 2\vartheta ^4 n \leq \vartheta ^{3/2} \frac{\vartheta^2}{2} n \leq \vartheta ^{3/2} \left\vert \mathcal{R}\right\vert$$ by the lower bound we get from Lemma \ref{reservoir_lemma}.
	Since every particular vertex in~$\mathcal{R}'$ is an internal vertex of at most~${(L-2) \vert\mathcal{R}\vert ^{L-3}}$ of the~$(x,y)$-$(w,z)$-paths of length~$L$ in~$H$ with all internal vertices from~$\mathcal{R}$, the Reservoir Lemma tells us that there are at least
	$$\frac{\vartheta }{2}\vert\mathcal{R}\vert ^{L-2} - \left\vert\mathcal{R}'\right\vert (L-2)\vert\mathcal{R}\vert ^{L-3} \geq \frac{\vartheta }{2}\vert\mathcal{R}\vert ^{L-2} - \vartheta ^{3/2} (L-2) \vert\mathcal{R}\vert ^{L-2} > 0$$
	such $(x,y)$-$(w,z)$-paths with all internal vertices in $\mathcal{R}\setminus\mathcal{R}'$.
\end{proof}

\section{Absorbing Path}\label{sec_absorbing_path}
In this section, we will construct a short (absorbing) path~$P_A$ that can ``absorb'' every small set of arbitrary vertices: For each small set~$X\subseteq V$, we can build a path~$P_A'$ with~$V(P_A')=V(P_A)\cup X$ which has the the same end-pairs as~$P_A$.
Later, it will then suffice to find a cycle containing~$P_A$ and almost all vertices, and subsequently absorb the remaining vertices into~$P_A$.
Since we already have a Connecting Lemma, actually the only step left will be to find a long path.

In order to construct such an absorbing path, one first has to find many \textit{absorbers} for each vertex~$v$: In our case, an absorber is a ``cascade'' of small paths that allows us to build a new such cascade of paths with the same end-pairs, containing all vertices of the first two paths and in addition the ``absorbed'' vertex~$v$ (see Definition~\ref{def: absorber}).
This makes sure that we can maintain the path structure of~$P_A$ when absorbing a vertex since the linking pairs remain unchanged.
Once we know that for every vertex~$v$, there exist many such~$v$-absorbers in~$H$, the probabilistic method provides a small set of disjoint paths with the property that for every vertex~$v$, this set contains many~$v$-absorbers.
Lastly, we will simply connect all these paths via the Connecting Lemma and note that then we can absorb a small set of vertices by greedily inserting each vertex into a different absorber.

To construct the absorbers, we again utilize that we can ``climb up'' the degree sequence.
More precisely, we define the following ``absorbers''.
\begin{dfn}\label{def: absorber}
	Let~$\alpha>0$,~$n\in\mathds{N}$, and~$H=([n],E)$ a~$3$-graph and set~$s=s(\alpha)=2\cdot\frac{1}{\alpha}$.\footnote{Recall that in our convention~$\frac{1}{\alpha}$ is an integer and, hence,~$s$ is an even integer.}
	For~$x \in [n]$, a~$4s$-tuple $$(v_1,w_1,y_1,z_1,\dots,v_s,w_s,y_s,z_s) \in [n]^{4s}$$ of distinct vertices is called~$(x,\alpha)$-$absorber$ (in~$H$) if
	\begin{enumerate}
		\item~$v_1w_1xy_1z_1$ is a path in~$H$,
		\item for~$i\in[s-1]$, we know that~$v_iw_iy_{i+1}z_{i+1}$ and~$v_{i+1}w_{i+1}x_iy_i$ are paths in~$H$, and
		\item~$v_sw_sy_sz_s$ is a path in~$H$.
	\end{enumerate}
\end{dfn}

When~$\alpha$ is not important, we omit it in the notation, then simply speaking of~$x$-absorbers.
Note that we can absorb~$x$ into an~$x$-absorber~$(v_1,w_1,y_1,z_1,\dots,v_s,w_s,y_s,z_s)$ as follows, see also Figure~\ref{fig: abs}.
Before absorption, we consider the paths~$v_iw_iy_{i+1}z_{i+1}$ and~$v_{i+1}w_{i+1}y_iz_i$, for all \emph{odd}~$i\in[s]$.
After absorption, we consider the path~$v_1w_1xy_1z_1$, the paths~$v_iw_iy_{i+1}z_{i+1}$ and~$v_{i+1}w_{i+1}y_iz_i$ for all \emph{even}~$i\in[s-2]$, and the path~$v_sw_sy_sz_s$.
Note that the (ordered) end-pairs of the considered paths are the same before and after absorption.
\begin{figure}\label{fig: abs}
	\centering
	\begin{tikzpicture}[x=1.6cm]
	
	\coordinate (x) at (-5,0);
	
	\coordinate (v1) at (-4,-3);	
	\coordinate (w1) at (-4,-1.5);
	\coordinate (y1) at (-4,1.5);
	\coordinate (z1) at (-4,3);
	
	\coordinate (v2) at (-2.5,-3);	
	\coordinate (w2) at (-2.5,-1.5);
	\coordinate (y2) at (-2.5,1.5);
	\coordinate (z2) at (-2.5,3);
	
	\coordinate (v3) at (-1,-3);	
	\coordinate (w3) at (-1,-1.5);
	\coordinate (y3) at (-1,1.5);
	\coordinate (z3) at (-1,3);
	
	\coordinate (d1) at (-0.5,-2.5);
	\coordinate (d2) at (0,-2.5);
	\coordinate (d3) at (0.5,-2.5);

	\coordinate (d1') at (-0.5,2.5);
	\coordinate (d2') at (0,2.5);
	\coordinate (d3') at (0.5,2.5);
	
	\coordinate (vs-2) at (1,-3);	
	\coordinate (ws-2) at (1,-1.5);
	\coordinate (ys-2) at (1,1.5);
	\coordinate (zs-2) at (1,3);
	
	\coordinate (vs-1) at (2.5,-3);	
	\coordinate (ws-1) at (2.5,-1.5);
	\coordinate (ys-1) at (2.5,1.5);
	\coordinate (zs-1) at (2.5,3);
	
	\coordinate (vs) at (4,-3);	
	\coordinate (ws) at (4,-1.5);
	\coordinate (ys) at (4,1.5);
	\coordinate (zs) at (4,3);
	
	\begin{pgfonlayer}{front}
	
	\foreach \a in {v1,v2,w1,w2,y1,y2,z1,z2,x,vs-1,vs,ws-1,ws,ys-1,ys,zs-1,zs,v3,w3,y3,z3,vs-2,ws-2,ys-2,zs-2}
	\fill  (\a) circle (2pt);
	
	\foreach \a in {d1,d2,d3,d1',d2',d3'}
	\fill  (\a) circle (1pt);
	
	\node at (-5.3,0) {$x$};
	
	\node at (-4.4,-3) {$v_1$};
	\node at (-4.4,-1.5) {$w_1$};
	\node at (-4.4,1.5) {$y_1$};
	\node at (-4.4,3) {$z_1$};
	
	\node at (-2.9,-3) {$v_2$};
	\node at (-2.9,-1.5) {$w_2$};
	\node at (-2.9,1.5) {$y_2$};
	\node at (-2.9,3) {$z_2$};
	
	\node at (2.1,-3) {$v_{s-1}$};
	\node at (2.1,-1.5) {$w_{s-1}$};
	\node at (2.1,1.5) {$y_{s-1}$};
	\node at (2.1,3) {$z_{s-1}$};
	
	\node at (3.6,-3) {$v_s$};
	\node at (3.6,-1.5) {$w_s$};
	\node at (3.6,1.5) {$y_s$};
	\node at (3.6,3) {$z_s$};
	
	\end{pgfonlayer}
	\qedge{(v1)}{(x)}{(w1)}{4.5pt}{1.5pt}{red!100!black}{red!100!black,opacity=0.2};
	\qedge{(w1)}{(x)}{(y1)}{4.5pt}{1.5pt}{red!100!black}{red!100!black,opacity=0.2};
	\qedge{(x)}{(z1)}{(y1)}{4.5pt}{1.5pt}{red!100!black}{red!100!black,opacity=0.2};
	
	\qedge{(v2)}{(y1)}{(w2)}{4.5pt}{1.5pt}{red!60!black}{red!60!black,opacity=0.2};
	\qedge{(w2)}{(y1)}{(z1)}{4.5pt}{1.5pt}{red!60!black}{red!60!black,opacity=0.2};
	\qedge{(v1)}{(w1)}{(y2)}{4.5pt}{1.5pt}{red!60!black}{red!60!black,opacity=0.2};
	\qedge{(w1)}{(z2)}{(y2)}{4.5pt}{1.5pt}{red!60!black}{red!60!black,opacity=0.2};
	
	\qedge{(v2)}{(w2)}{(y3)}{4.5pt}{1.5pt}{red!100!black}{red!100!black,opacity=0.2};
	\qedge{(w2)}{(z3)}{(y3)}{4.5pt}{1.5pt}{red!100!black}{red!100!black,opacity=0.2};
	\qedge{(v3)}{(y2)}{(w3)}{4.5pt}{1.5pt}{red!100!black}{red!100!black,opacity=0.2};
	\qedge{(w3)}{(y2)}{(z2)}{4.5pt}{1.5pt}{red!100!black}{red!100!black,opacity=0.2};
	
	\qedge{(vs-2)}{(ws-2)}{(ys-1)}{4.5pt}{1.5pt}{red!100!black}{red!100!black,opacity=0.2};
	\qedge{(ws-2)}{(zs-1)}{(ys-1)}{4.5pt}{1.5pt}{red!100!black}{red!100!black,opacity=0.2};
	\qedge{(vs-1)}{(ys-2)}{(ws-1)}{4.5pt}{1.5pt}{red!100!black}{red!100!black,opacity=0.2};
	\qedge{(ws-1)}{(ys-2)}{(zs-2)}{4.5pt}{1.5pt}{red!100!black}{red!100!black,opacity=0.2};
	
	\qedge{(vs-1)}{(ws-1)}{(ys)}{4.5pt}{1.5pt}{red!60!black}{red!60!black,opacity=0.2};
	\qedge{(ws-1)}{(zs)}{(ys)}{4.5pt}{1.5pt}{red!60!black}{red!60!black,opacity=0.2};
	\qedge{(ys-1)}{(zs-1)}{(ws)}{4.5pt}{1.5pt}{red!60!black}{red!60!black,opacity=0.2};
	\qedge{(ys-1)}{(ws)}{(vs)}{4.5pt}{1.5pt}{red!60!black}{red!60!black,opacity=0.2};		
	\qedge{(vs)}{(ws)}{(ys)}{4.5pt}{1.5pt}{red!100!black}{red!100!black,opacity=0.2};
	\qedge{(ws)}{(ys)}{(zs)}{4.5pt}{1.5pt}{red!100!black}{red!100!black,opacity=0.2};
	
	\end{tikzpicture}
	\caption{Structure of the absorbers with hyperedges used \textcolor{red!60!black}{before absorption} of~$v$ in dark red and hyperedges used \textcolor{red!100!black}{after absorption} of~$v$ in light red.}\label{fig: abs}
\end{figure}

\begin{lemma}[Many Absorbers]\label{absorber_lemma}
	Let~$1/n\ll\vartheta\ll\alpha\ll 1$.
	If $H=([n],E)$ is a~$3$-graph with~${d(i,j) \geq \min\left(i,j, \frac{n}{2}\right) + \alpha n}$ for all~$ij \in [n]^{(2)}$ and~$\mathcal{R}$ is a reservoir set given by Lemma \ref{reservoir_lemma}, then for every~$x\in [n]$, the number of~$(x,\alpha)$-absorbers in~$\left([n] \setminus \mathcal{R} \right) ^{4s(\alpha)}$ is at least~$(\frac{\alpha n}{3})^{4s(\alpha)}$.
\end{lemma}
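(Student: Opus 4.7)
The plan is to construct $(x,\alpha)$-absorbers greedily, one vertex at a time, aiming for at least $\alpha n/3$ valid choices at each of the $4s$ steps, which immediately yields the required bound $(\alpha n/3)^{4s}$.

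Two elementary consequences of the degree hypothesis do all the work. First, every pair of distinct vertices satisfies $d(a,b)\geq\alpha n$, so any step at which the new vertex is determined by a single pair-neighbourhood (intersected with $[n]\setminus\mathcal{R}$ and the past choices) automatically yields $\geq\alpha n/3$ options after excluding the at most $\vartheta^2 n+4s\ll\alpha n$ forbidden vertices. Second, when $a,b\geq n/2$ the stronger bound $d(a,b)\geq n/2+\alpha n$ gives $|N(a,b)\cap[n/2,n]|\geq\alpha n$, and by inclusion--exclusion $|N(a,b)\cap N(c,d)|\geq 2\alpha n$ whenever all four of $a,b,c,d$ are at least $n/2$. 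This intersection bound is essential at the closing tuple $i=s$, where the extra edges $v_sw_sy_s$ and $w_sy_sz_s$ force $v_s$ and $z_s$ to lie in intersections of pair-neighbourhoods.

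The construction would proceed inductively on the tuple index, maintaining the invariant $v_i,w_i,y_i,z_i\in[n/2,n]$. For each intermediate tuple $i\in\{2,\ldots,s-1\}$, I would pick $w_i\in N(y_{i-1},z_{i-1})\cap[n/2,n]$, then $v_i\in N(w_i,y_{i-1})\cap[n/2,n]$, then $y_i\in N(v_{i-1},w_{i-1})\cap[n/2,n]$, and finally $z_i\in N(w_{i-1},y_i)\cap[n/2,n]$; each step provides $\geq\alpha n$ options by the second observation, hence $\geq\alpha n/3$ after the usual exclusions. For the closing tuple $i=s$, I would order the picks as $w_s,y_s,v_s,z_s$: the first two come from single-pair neighbourhoods in $[n/2,n]$ as before, while $v_s\in N(w_s,y_{s-1})\cap N(w_s,y_s)$ and $z_s\in N(w_{s-1},y_s)\cap N(w_s,y_s)$ each have size $\geq 2\alpha n$ by the inclusion--exclusion bound, since all involved vertices are $\geq n/2$.

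The hard part will be seeding the induction, i.e.\ producing $\geq(\alpha n/3)^4$ first tuples $(v_1,w_1,y_1,z_1)\in\bigl([n/2,n]\setminus\mathcal{R}\bigr)^4$ for which $v_1w_1xy_1z_1$ is a path. When $x\geq n/2$ this is immediate, since the link graph $L_x$ restricted to $[n/2,n]$ has minimum degree $\geq\alpha n$ and therefore contains many length-$3$ paths. When $x<n/2$, however, $L_x[n/2,n]$ can be very sparse or even empty, so the direct approach fails and some delicate work is required. I would resolve this by a climb-up argument in the spirit of Lemma~\ref{connecting_lemma}, exploiting $d_{L_x}(a)\geq\min(a,n/2)+\alpha n$: either by relaxing the invariant to ``at least $n/2-O(\alpha n)$'' and verifying that the $\alpha n/3$ slack in the second observation absorbs the loss, or by pre-processing $x$ via a short climb up the degree sequence of $H$ before starting the absorber construction, so that subsequent choices are anchored at pairs of large vertices. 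Once the base case is in hand, the inductive procedure above delivers the remaining factor $(\alpha n/3)^{4(s-1)}$ with no further difficulty.
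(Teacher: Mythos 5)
Your intermediate steps ($2\leq i\leq s-1$) and your closing step ($i=s$, via the intersection bound $|N(a,b)\cap N(c,d)|\geq 2\alpha n$ when $a,b,c,d\geq n/2$) are sound and match the paper's argument exactly, and you correctly single out the base case $i=1$ with $x<n/2$ as the real difficulty. But both of your proposed resolutions of that base case contain a genuine gap. Relaxing the invariant to $v_i,w_i,y_i,z_i\geq n/2-O(\alpha n)$ does not help: if $x$ is small, say $x\leq\alpha n$, then $d(x,w_1)\geq\min(x,w_1,n/2)+\alpha n$ is only of order $\alpha n$, and nothing in the hypothesis forces any of these $O(\alpha n)$ neighbours to lie anywhere near $n/2$; a single step of climbing gains only $O(\alpha n)$, whereas a shortfall of order $n$ must be made up. Your second idea, pre-processing $x$ by a short climb \emph{before} starting the absorber, conflicts with the rigid structure of Definition~\ref{def: absorber}: $x$ must appear literally as the middle vertex of the first path $v_1w_1xy_1z_1$, so there is no room to anchor at a pair of large vertices first.

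The paper's resolution is to interleave the climb-up with the absorber construction itself, over all $s$ tuples rather than just the first. One maintains the ramping invariant $v_j,w_j,y_j,z_j\geq\min\bigl(x+j\tfrac{\alpha n}{2},\tfrac{n}{2}\bigr)$, which starts at roughly $x$ and increases by $\alpha n/2$ per tuple. At each step the pair whose neighbourhood one samples contains a vertex at height roughly $x+(j-1)\alpha n/2$ (or $x$ itself at $j=1$), so $d(\cdot,\cdot)\geq\min(\cdot,\cdot,n/2)+\alpha n$ guarantees a spill of $\alpha n/2$ vertices above the next threshold, giving the required $\alpha n/3$ choices after exclusions. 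This is exactly where the choice $s=2/\alpha$ in Definition~\ref{def: absorber} is used: after $s-1$ tuples the threshold has reached $n/2$, and only then is the intersection argument you describe invoked for the closing tuple. Without this interleaved ramp, the proof does not close, so as written the proposal is incomplete.
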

\begin{proof}[Proof of Lemma \ref{absorber_lemma}]
	Let~$1/n\ll\vartheta\ll\alpha\ll 1$, let~$H$ be as in the statement, and let~$x\in [n]$.
	There are at least $\frac{n}{3}$ possibilities to choose a vertex~$w_1\in [n]\setminus(\mathcal{R}\cup\{x\})$ with~$w_1 \geq \min(x+\frac{\alpha n}{2},\frac{n}{2})$.
	Then, there are at least~$\frac{\alpha n}{3}$ choices for a vertex~$v_1\in N(w_1,x)\setminus\mathcal{R}$ with~$v_1\geq\min(x+\frac{\alpha n}{2},\frac{n}{2})$ since~$\vert N(w_1,x)\vert\geq \min(w_1,x,\frac{n}{2})+\alpha n$ and~$w_1\geq\min(x+\frac{\alpha n}{2},\frac{n}{2})$.
	Similarly, there are at least~$\frac{\alpha n}{3}$ choices for a vertex~$y_1\in N(w_1,x)\setminus(\mathcal{R}\cup\{v_1\})$ with~$y_1\geq\min(x+\frac{\alpha n}{2},\frac{n}{2})$ and at least~$\frac{\alpha n}{3}$ choices for a vertex~$z_1\in N(x,y_1)\setminus(\mathcal{R}\cup\{v_1,w_1\})$ with~$z_1\geq\min(x+\frac{\alpha n}{2},\frac{n}{2})$.
	
	Now assume that for some~$i\in[s-2]$, vertices~$v_j$,~$w_j$,~$y_j$, and~$z_j$ have already been selected, for all~$j\in[i]$, in such a way that all edges required by Definition~\ref{def: absorber} are present and~$v_j,w_j,y_j,z_j\geq\min(x+j\frac{\alpha n}{2},\frac{n}{2})$ for all~$j\in[i]$, and denote the set containing all these vertices, all vertices from~$\mathcal{R}$, and~$x$ by~$A_i$.
	Note that for all~$i\in[s-2]$, we have~$\vert A_i\vert\leq \frac{\alpha n}{7}$.
	Therefore, there are at least~$\frac{\alpha n}{3}$ choices for a vertex~$w_{i+1}\in N(y_i,z_i)\setminus A_i$ with~$w_{i+1}\geq\min(x+(i+1)\frac{\alpha n}{2},\frac{n}{2})$.
	Further, there are at least~$\frac{\alpha n}{3}$ choices for a vertex~$v_{i+1}\in N(w_{i+1},y_i)\setminus A_i$ with~$v_{i+1}\geq\min(x+(i+1)\frac{\alpha n}{2},\frac{n}{2})$.
	Similarly, there are at least~$\frac{\alpha n}{3}$ choices for a vertex~$y_{i+1}\in N(v_i,w_i)\setminus (A_i\cup\{v_{i+1},w_{i+1}\})$ with~$y_{i+1}\geq\min(x+(i+1)\frac{\alpha n}{2},\frac{n}{2})$ and at least~$\frac{\alpha n}{3}$ choices for a vertex~$z_{i+1}\in N(w_i,y_{i+1})\setminus(A_i\cup\{v_{i+1},w_{i+1}\})$ with~$z_{i+1}\geq\min(x+(i+1)\frac{\alpha n}{2},\frac{n}{2})$.
	
	Assume that~$v_j$,~$w_j$,~$y_j$, and~$z_j$ have been selected for all~$j\in[s-1]$ such that all edges required by Definition~\ref{def: absorber} are present and~$v_j,w_j,y_j,z_j\geq\min(x+j\frac{\alpha n}{2},\frac{n}{2})$, for all~$j\in[s-1]$, and denote the set containing all these vertices, all vertices from~$\mathcal{R}$, and~$x$ by~$A_{s-1}$.
	Then there are at least~$\frac{\alpha n}{3}$ choices for a vertex~$w_{s}\in N(y_{s-1},z_{s-1})\setminus A_{s-1}$ with~$w_s\geq\min(x+s\frac{\alpha n}{2},\frac{n}{2})$ and at least~$\frac{\alpha n}{3}$ choices for a vertex~$y_s\in N(v_{s-1},w_{s-1})\setminus (A_{s-1}\cup\{w_s\})$ with~$y_s\geq\min(x+s\frac{\alpha n}{2},\frac{n}{2})$.
	Note that by the choice of~$s$ we have~$v_{s-1},w_{s-1},y_{s-1},z_{s-1},w_s,y_s\geq \min((s-1)\frac{\alpha n}{2},\frac{n}{2})=\frac{n}{2}$.
	Thus, we know that $$\vert N(w_{s},y_{s-1})\cap N(w_s,y_s)\vert\geq\frac{n}{2}+\alpha n+\frac{n}{2}+\alpha n-n\geq2\alpha n$$ and so there are at least~$\alpha n$ choices for~$v_s\in N(w_{s},y_{s-1})\cap N(w_s,y_s)\setminus A_{s-1}$ and, similarly, we know that there are at least~$\alpha n$ choices for~$z_s\in N(w_{s-1},y_{s})\cap N(w_s,y_s)\setminus (A_{s-1}\cup\{v_s\})$.
	
	Observe that if the vertices~$v_1,w_1,y_1,z_1,\dots,v_s,w_s,y_s,z_s$ are chosen in the respective neighbourhoods as described above, they form an~$(x,\alpha)$-absorber.
	Hence, the number of~$(x,\alpha)$-absorbers is indeed at least~$(\frac{\alpha}{3}n)^{4s(\alpha)}$.
\end{proof}

We are now ready to prove Lemma \ref{absorbing_path_lemma}.

\begin{proof}[Proof of Lemma \ref{absorbing_path_lemma}]
	The proof proceeds in two steps.
	First, we will use the probabilistic method, showing that with positive probability a randomly chosen set of~$4s$-tuples contains many absorbers for every vertex while being not too large.
	In the second, part we connect all those paths using the Connecting Lemma.
	
	Let~$1/n\ll\vartheta \ll\alpha$, let~$L\in\mathds{N}$ be given by the Connecting Lemma, let~$s=s(\alpha)$, and let~$H,\mathcal{R}$ be given as in the statement.
	
	Let~$\mathcal{X} \subseteq \left([n] \setminus\mathcal{R}\right)^{4s}$ be a random selection in which each~$4s$-tuple in~$\left([n] \setminus\mathcal{R}\right)^{4s}$ is included independently with probability~$p:=\frac{\vartheta ^23^{4s+2}}{\alpha ^{4s}n^{4s-1}}$.
	Then $\mathbb{E}\left[\left\vert\mathcal{X}\right\vert\right] \leq pn^{4s} = \frac{\vartheta ^23^{4s+2}}{\alpha ^{4s}}n$ and by Markov's inequality we get 
	\begin{align}\label{upper_bound_tuple_number}
	\mathbb{P}\left(\left\vert\mathcal{X}\right\vert > 2 \frac{\vartheta ^23^{4s+2}}{\alpha ^{4s}}n\right) \leq \frac{1}{2} .
	\end{align}
	
	Calling two distinct~$4s$-tuples \emph{overlapping} if they contain a common vertex, we observe that there are at most $(4s)^2n^{8s-1}$ ordered pairs of overlapping~$4s$-tuples.
	Let us denote the number of overlapping pairs with both of their tuples occurring in $\mathcal{X}$ by $D$.
	We then get~${\mathbb{E}[D] \leq (4s)^2n^{8s-1}p^2=(4s)^2\big(\frac{\vartheta ^23^{4s+2}}{\alpha ^{4s}}\big)^2 n}$ and Markov yields 
	\begin{align}\label{overlapping_pairs_upper_bound}
	{\mathbb{P}\left[D>\vartheta ^2n\right]\leq\mathbb{P}\left[D>64s^2\left(\frac{\vartheta ^23^{4s+2}}{\alpha ^{4s}}\right) ^2 n\right] \leq \frac{1}{4}}
	\end{align}
	since~$1/n\ll\vartheta\ll\alpha$.
	
	Next, we focus on the number of absorbers contained in~$\mathcal{X}$.
	For~$x\in [n]$, let~$A_x$ denote the set of all~$(x,\alpha)$-absorbers.
	Lemma \ref{absorber_lemma} gives that for every $x\in [n]$, $$\mathbb{E}\left[\left\vert A_x \cap \mathcal{X}\right\vert\right] \geq \left(\frac{\alpha n}{3}\right)^{4s}p=9\vartheta ^2 n .$$
	Since $\left\vert A_x \cap \mathcal{X} \right\vert$ is binomially distributed, we may apply Chernoff's inequality to get for every~$x\in[n]$,
	\begin{align}\label{selected_absorbers_number}
		\mathbb{P}\left( \left\vert A_x \cap \mathcal{X} \right\vert \leq 3\vartheta ^2 n\right) \leq \exp \left(-\frac{\left(\frac{2}{3}\right)^2}{2}9\vartheta ^2 n\right) < \frac{1}{5n}\,.
	\end{align}
	Hence, by the union bound and (\ref{upper_bound_tuple_number}), (\ref{overlapping_pairs_upper_bound}) and (\ref{selected_absorbers_number}), there exists a selection $\mathcal{F}_* \subseteq\left([n] \setminus\mathcal{R}\right)^{4s}$ with:
	\begin{itemize}
		\item $\left\vert \mathcal{F}_* \right\vert \leq \frac{2\vartheta ^23^{4s+2}}{\alpha ^{4s}}n$
		\item $\mathcal{F}_*$ contains at most $\vartheta ^2 n$ overlapping pairs
		\item $\mathcal{F}_*$ contains at least $3 \vartheta ^2 n$~$x$-absorbers, for every $x\in [n]$
	\end{itemize}
	For each overlapping pair, we delete one of its~$4s$-tuples and thus, for every~$x\in [n]$, we lose at most~$\vartheta ^2n$~$x$-absorbers.
	Furthermore, we delete every~$4s$-tuple~$A\in\mathcal{F}_*$ for which there does not exist an~$x\in[n]$ such that~$A$ is an~$x$-absorber.
	Note that now every remaining tuple has all edges present as in Definition~\ref{def: absorber} and all its vertices are distinct. 
	This deletion process gives rise to an~${\mathcal{F} \subseteq \left([n]\setminus \mathcal{R}\right) ^{4s}}$ satisfying:
	\begin{itemize}
		\item $\left\vert \mathcal{F} \right\vert \leq \frac{2\vartheta ^23^{4s+2}}{\alpha ^{4s}}n$,
		\item for every~$4s$-tuple~$A\in\mathcal{F}$ there is an~$x\in [n]$ such that~$A$ is an~$x$-absorber, in particular, all the vertices in~$A$ are distinct and there are edges present as in Definition~\ref{def: absorber}, and
		\item for every~$x \in [n]$, there are at least~$2\vartheta ^2n$~$x$-absorbers in~$\mathcal{F}$.
	\end{itemize}

	Next, we want to connect the elements in $\mathcal{F}$ to a path utilising the Connecting Lemma.
	Let $\mathcal{G}$ be the set consisting of all the paths~$v_iw_iy_{i+1}z_{i+1}$ and~$v_{i+1}w_{i+1}y_iz_i$ for odd~$i$ and for each $\left(v_1,w_1,y_1,z_1,\dots,v_s,w_s,y_s,z_s\right) \in \mathcal{F}$: 
	$$\mathcal{G} = \bigcup_{\left(v_1,w_1,y_1,z_1,\dots,v_s,w_s,y_s,z_s\right) \in \mathcal{F}}\big\{v_{i+j}w_{i+j}y_{i+1-j}z_{i+1-j}:i\in[s]\text{ odd},j\in\{0,1\}\big\}$$
	We then have~$\vert \mathcal{G} \vert = 2\vert\mathcal{F}\vert \leq \frac{4\vartheta ^23^{4s+2}}{\alpha ^{4s}}n$.	
	Let $\mathcal{G}^* \subseteq \mathcal{G}$ be a maximal subset such that there exists a path~${P^* \subseteq H-\mathcal{R}}$ with:
	\begin{itemize}
		\item $P^*$ contains all paths in~$\mathcal{G}^*$ as subpaths
		\item $V\left(P^*\right)\cap \bigcup_{P\in\mathcal{G}\setminus\mathcal{G}^*}V(P) = \emptyset$
		\item $P^*$ satisfies~${v\left(P^*\right) = \left(L+2\right) (\left\vert\mathcal{G}^*\right\vert-1) +4}$.
	\end{itemize}
	First assume $\mathcal{G}^* \subsetneq \mathcal{G}$, and let~${Q^* \in \mathcal{G}\setminus\mathcal{G}^*}$.
	Notice that recalling~$1/n\ll\vartheta\ll\alpha, 1/L\ll 1$, we have
	\begin{align}\label{path_and_reservoir_lemma}
	v\left( P^*\right) + \Big\vert \bigcup_{P\in\mathcal{G}\setminus\mathcal{G}^*}V(P)\Big\vert + \left\vert \mathcal{R}\right\vert \leq \left( L+2\right) \frac{4\vartheta ^23^{4s+2}}{\alpha ^{4s}}n + \vartheta ^2 n \leq \frac{\vartheta n}{2\left(L-2\right) } .
	\end{align}
	Now Lemma~\ref{connecting_lemma} tells us that there are at least~$\vartheta n^{L-2}$ paths of length~$L$ connecting the ending-pair~$(a,b)$ of~$P^*$ with the starting-pair~$(b,c)$ of~$Q^*$ (which are disjoint by the choice of~$P^*$).
	By (\ref{path_and_reservoir_lemma}), at least half of those are disjoint to~$\mathcal{R}\cup\bigcup_{P\in\mathcal{G}\setminus(\mathcal{G}^*\cup \{Q^*\})}V(P)$ and (apart from the end-pairs)  disjoint to~$V\left(P^*\right)$ and~$V\left( Q^*\right)$.
	Hence, there exists a path~$P^{**}$ starting with~$P^*$ and ending with~$Q^*$ whose vertex set is disjoint to~$\mathcal{R}\cup\bigcup_{P\in\mathcal{G}\setminus(\mathcal{G}^*\cup \{Q^*\})}V(P)$ and for which we further have $$v\left(P^{**}\right) = v\left(P^*\right) + L-2 + v\left(Q^*\right) = 4+\left( L+2\right)(\left\vert \mathcal{G}^* \cup \left\{Q^*\right\}\right\vert-1) \,.$$ 
	Therefore,~$\mathcal{G}^* \cup \left\{Q^*\right\}$ contradicts the maximality of~$\mathcal{G}^*$ and thus,~$\mathcal{G}^* = \mathcal{G}$.
	Further, for~$P_A:=P^*$, the hierarchy~$1/n\ll\vartheta\ll\alpha, 1/L\ll 1$ gives us the required bound on~$v\left(P_A\right)$:
	$$v\left(P_A\right) \leq 4+(L+2)\frac{4\vartheta ^23^{4s+2}}{\alpha ^{4s}}n \leq \vartheta n .$$
	Lastly, the structure and the number of the absorbers in~$P_A$ ensure the absorbing property: Let~$X \subseteq [n]$ with~$\vert X \vert \leq 2 \vartheta ^2 n$.
	For each~$x\in X$, we can choose one~$x$-absorber~$(v_1,w_1,y_1,z_1,\dots,v_s,w_s,y_s,z_s)$ from~$\mathcal{F}$ such that all chosen absorbers are distinct, since for every~$x\in V$, the number of~$x$-absorbers in~$\mathcal{F}$ is at least~$2\vartheta^2 n$.
	For every $x\in X$, we then ``open''~$P_A$ at the paths~$v_{i+j}w_{i+j}y_{i+1-j}z_{i+1-j}$ for~$i\in[s]$ odd and~$j\in\{0,1\}$ and reconnect it to a path containing~$x$ by instead considering the paths~$v_{i+j}w_{i+j}y_{i+1-j}z_{i+1-j}$, for all even~$i\in[s]$ and~$j\in\{0,1\}$, and the paths~$v_1w_1xy_1z_1$ and~$v_sw_sy_sz_s$.
	That leaves us with a path $P'$ which satisfies~$V\left( P'\right) = V\left(P_A\right) \cup X$ and has the same end-pairs as~$P_A$.
\end{proof}

\section{Long Path}\label{sec_long_path}
In this section, we will prove the existence of a path that contains almost all vertices.
To do so, we will need a weak form of the hypergraph regularity method which we will therefore introduce briefly.

Let $H=(V,E)$ be a~$3$-graph and $V_1,V_2,V_3 \subseteq V$; we write $$E\left( V_1,V_2,V_3\right) = \left\{\left(v_1,v_2,v_3\right) \in V_1\times V_2\times V_3 : v_1v_2v_3 \in E\right\}$$ and $e\left( V_1,V_2,V_3\right) = \left\vert E(V_1,V_2,V_3)\right\vert $.
Further, we write $$H(V_1,V_2,V_3)=(V_1\dot\cup V_2\dot\cup V_3,E\left( V_1,V_2,V_3\right)).$$

For $\delta > 0, d\geq 0$ and~$V_1,V_2,V_3 \subseteq V$, we say that~$H(V_1,V_2,V_3)$ is \textit{weakly}~$(\delta , d)$-\textit{quasirandom} if for all $U_1 \subseteq V_1, U_2 \subseteq V_2, U_3 \subseteq V_3$, we have that $$\left\vert e\left( U_1, U_2, U_3\right) - d\left\vert U_1\right\vert \left\vert U_2\right\vert \left\vert U_3\right\vert \right\vert \leq \delta \left\vert V_1\right\vert \left\vert V_2\right\vert \left\vert V_3\right\vert .$$
We say that $H(V_1,V_2,V_3)$ is weakly $\delta$-quasirandom if it is weakly~$\left(\delta , d\right)$-quasirandom for some~${d\geq 0}$.
For brevity, we might also say that~$V_1,V_2,V_3$ are weakly~$\left(\delta , d\right)$-quasirandom (or~$\delta$-quasirandom) (in~$H$).
Lastly, since we only look at weak quasirandomness in this section, we may omit the prefix ``weakly''. 

The regularity lemma is a strong tool in extremal combinatorics. While the full generalisation to hypergraphs is more involved than the version for graphs, there is also a light version for hypergraphs that can already be useful and indeed it is for us:
\begin{lemma}[Weak Hypergraph Regularity Lemma]\label{weak_regularity_lemma}
	For $\delta > 0, t_0 \in \mathds{N}$, there exists a~${T_0 \in \mathds{N}}$ such that for every~$3$-graph $H=\left( [n],E\right)$ with $n\geq t_0$, there exist an integer~$t$ with~$t_0\leq t\leq T_0$ and a partition $[n]=V_0\dot\cup V_1 \dot\cup \dots \dot\cup V_t$ such that:
	\begin{itemize}
		\item $\left\vert V_0 \right\vert \leq \delta n$ and $\left\vert V_1\right\vert = \dots = \left\vert V_t \right\vert$
		\item for $i \geq 1$, we have $\max\left( V_i\right) \leq \max\left(V_{i+1}\right)$ and $\max\left( V_i\right) -\min\left( V_i\right) \leq \frac{n}{t_0}$
		\item there are at most $\delta t^3$ sets $ijk\in [t]^{(3)}$ such that the ``triplet'' $V_i,V_j,V_k$, also written as $V^{ijk}$, is not~$\delta$-quasirandom in~$H$.
	\end{itemize}
\end{lemma}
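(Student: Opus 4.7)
The plan is to adapt the classical proof of the weak regularity lemma for $3$-uniform hypergraphs by ensuring that all refinements respect an initial partition of~$[n]$ into consecutive intervals. First, I would partition $[n]$ into $t_0$ consecutive intervals $I_1<I_2<\dots<I_{t_0}$ of sizes in~$\{\lfloor n/t_0\rfloor, \lceil n/t_0\rceil\}$. This initial partition already satisfies both the ordering condition $\max(V_i)\leq \max(V_{i+1})$ and the interval-width condition $\max(V_i)-\min(V_i)\leq n/t_0$, and any refinement that only subdivides each $I_j$ internally will preserve them, since every resulting part sits inside some~$I_j$.

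Starting from this initial partition, I would then run the standard Szemer\'edi-style energy-increment iteration. For a partition $\mathcal{P}=\{V_1,\dots,V_t\}$, define
\[
q(\mathcal{P})=\sum_{(a,b,c)\in [t]^3}\frac{|V_a||V_b||V_c|}{n^3}\,d(V_a,V_b,V_c)^2,\qquad d(V_a,V_b,V_c)=\frac{e(V_a,V_b,V_c)}{|V_a||V_b||V_c|},
\]
so that $q(\mathcal{P})\leq 1$. If more than $\delta t^3$ triplets $V^{ijk}$ fail to be $\delta$-quasirandom, each such triplet yields witness subsets inside its three parts; refining every part internally along the Boolean algebra generated by its witnesses produces a new partition $\mathcal{P}'$ with $q(\mathcal{P}')\geq q(\mathcal{P})+\eta(\delta)$ for some fixed $\eta(\delta)>0$. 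Since $q\leq 1$, the iteration halts after at most $1/\eta(\delta)$ steps, with the number of parts bounded by some $T_0'=T_0'(\delta,t_0)$; crucially, every refinement is internal to an existing part, so the interval-ordering conditions are preserved throughout.

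At this point the partition has at most $\delta t^3$ non-quasirandom triplets, but the parts have unequal sizes. To finish, I would pick a common target size~$m$ slightly below $\min_i|V_i|$, trim each part to size exactly~$m$ by moving any excess vertices to the exceptional set $V_0$, and reindex the parts so that the $V_i$ appear in increasing order of~$\max(V_i)$; since each part lies entirely in some~$I_j$, this reindexing is consistent with the interval structure and the width bound. Taking~$m$ to be a suitable multiple of~$n/T_0'$ ensures~$|V_0|\leq \delta n$. The main technical point I would need to verify carefully is that the trimming step does not destroy $\delta$-quasirandomness of triplets that were regular before; this is standard and handled by running the energy-increment argument with a slightly stronger parameter $\delta/2$, so that the $O(\delta)$ perturbation of densities caused by removing up to $\delta n$ vertices is absorbed into the final $\delta$.
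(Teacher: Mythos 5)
The paper does not prove this lemma itself; it cites the standard references for the weak hypergraph regularity lemma and then remarks that the extra ordering and interval-width requirements can be obtained by first splitting $[n]$ into consecutive intervals of length $n/t_0$ and then running the usual refinement inside those intervals. That is exactly what you propose, so your approach is essentially the one the paper has in mind. One caveat worth flagging: you defer equalization entirely to a final trimming step in which $m$ is ``slightly below $\min_i|V_i|$'' and simultaneously ``a suitable multiple of $n/T_0'$.'' These two constraints are only compatible if, after the energy-increment iteration, every part still has size $\Omega(n/T_0')$ — but Boolean-algebra refinement without per-step re-equalization can leave some parts arbitrarily small, which would force $m$ to be tiny and could push far more than $\delta n$ vertices into $V_0$. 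The standard proofs avoid this by re-chopping into equal-size pieces (and dumping remainders into $V_0$) at every iteration, so that sizes stay comparable throughout; this per-step re-equalization is also internal to each interval $I_j$, so it still preserves your ordering and width conditions. With that adjustment the argument is exactly the one the paper references.
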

For a proof of Lemma \ref{weak_regularity_lemma} see for instance \cite{hyper_reg1,hyper_reg2,hyper_reg3}.
One can get the slight extra requirement on the ordering of the vertices by dividing the vertex set in intervals of length~$\frac{n}{t_0}$ and afterwards going on with the proof refining those sets.
This has been remarked before, e.g., by Reiher, R\"odl, and Schacht in \cite{K43minus}.

We will regularise~$H$ and then observe that a quasirandom triplet~$V^{ijk}$ with positive density can almost be covered with not too short disjoint paths. 
Thus, we can think of the situation as a reduced hypergraph with the partition classes as vertices and edges encoding those ``good triplets'' that in~$H$ we can almost cover with paths. 
At that point we will notice that the degree condition can almost be transferred to the reduced hypergraph. 
In Lemma~\ref{matching_lemma}, we prove that this degree condition will ensure an almost perfect matching in the reduced hypergraph. 
But that means that in~$H$ almost all vertices can be covered with paths, which we can then connect through the reservoir to a long path in~$H$.
\begin{lemma}[Good Triplets]\label{good_triplets_lemma}
	For~$\xi > 0, d>0, \delta > 0, n \in \mathds{N}$ with~$\frac{d\xi ^3 -\delta}{2}n \geq 1$, the following holds.
	Let~${H=\left( U\dot\cup V\dot\cup W, E\right)}$ with~$\vert U\vert, \vert V\vert, \vert W\vert = n$ be a~$3$-graph and suppose that~$U,V,W$ are~$\left( \delta , d\right)$-quasirandom in~$H$.
	Then at least~$(1-\xi)3n$ vertices of~$H$ can be covered by vertex-disjoint paths of length at least~$\frac{d\xi ^3 -\delta}{2}n -2$.
\end{lemma}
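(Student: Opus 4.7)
The plan is to argue by contradiction via an extremal choice. Set $m := \frac{d\xi^3 - \delta}{2}n - 2$, and let $\mathcal{P}$ be a maximum vertex-disjoint family of tight paths in $H$ each of length at least $m$. Denote by $U', V', W'$ the portions of $U, V, W$ not covered by $\mathcal{P}$; the aim is to show $|V(\mathcal{P})| \geq (1-\xi)\cdot 3n$.

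I first collect two structural observations. Because $H$ is $3$-partite and every tight edge meets each of $U, V, W$ in exactly one vertex, the parts visited along any tight path cycle with period three; consequently, within a single tight path the three part-counts differ by at most one. Since each path in $\mathcal{P}$ has at least $m + 2 = \frac{d\xi^3 - \delta}{2}n$ vertices, $|\mathcal{P}|$ is bounded by a constant $K = K(d, \xi, \delta)$. Summing the per-path imbalance, the three coverages $|U \cap V(\mathcal{P})|,\ |V \cap V(\mathcal{P})|,\ |W \cap V(\mathcal{P})|$ differ pairwise by at most $K$. Hence if $|V(\mathcal{P})| < (1-\xi)\cdot 3n$, each of $|U'|, |V'|, |W'|$ exceeds $\xi n - K$.

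Applying $(\delta, d)$-quasirandomness to the triple $(U', V', W')$ then yields $e_H(U', V', W') \geq d|U'||V'||W'| - \delta n^3 \geq (d\xi^3 - \delta)n^3 - O(n^2)$. It suffices now to exhibit inside $H[U' \cup V' \cup W']$ a single tight path of length at least $m$, since such a path extends $\mathcal{P}$ and contradicts maximality. I build this path greedily: pick an initial pair, say in $U' \times V'$, whose pair-degree in $W'$ is at least the average, namely at least $(d\xi^3 - \delta)n/2$ by averaging from the edge count above, and extend one vertex at a time, each time choosing a still-unused vertex in the appropriate part that forms a hyperedge with the current ending pair.

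The main obstacle is ensuring that greedy extension does not stall before reaching length $m$. The plan here is a double-counting argument based on quasirandomness: the number of pairs whose pair-degree in the third part falls below $m + 2$ is $O(\delta n^2 / (d\xi^3))$, which is negligible compared to the typical pair-degree $(d\xi^3 - \delta)n$. Hence at each step one can select the next vertex in such a way that the new ending pair still lies in the "good" regime (pair-degree at least $m + 2$ in the relevant part), which guarantees extendability for the next step. Iterating, the construction proceeds unbroken until the desired length $m$ is reached, and the resulting path contradicts the maximality of $\mathcal{P}$.
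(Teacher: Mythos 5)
Your overall skeleton (extremal family of long alternating paths, quasirandomness of the leftover, contradiction) matches the paper, but the crucial middle step is not correct.

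The claim that ``the number of pairs whose pair-degree in the third part falls below $m+2$ is $O(\delta n^2/(d\xi^3))$'' does not follow from weak quasirandomness and is in fact false. Weak quasirandomness only controls sums $e(U_1,V_1,W_1)=\sum_{(u,v)\in U_1\times V_1}|N(u,v)\cap W_1|$ for \emph{product} sets; it gives no meaningful control on the \emph{distribution} of pair degrees. Concretely, take a random $3$-partite $3$-graph of density $d$, then delete every edge through a uniformly random half of the pairs in $U\times V$. The result is still $(\delta',d/2)$-quasirandom with $\delta'$ as small as you like, yet half of all pairs have cross pair-degree zero. So in the worst case a constant proportion of pairs can be bad, and your greedy scheme can stall immediately. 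Even if the bad pairs \emph{were} few, your step ``at each step one can select the next vertex so that the new ending pair is good'' needs more justification: the bad pairs could all be incident to the current endpoint $y$, in which case every extension $(y,z)$ is bad; a global bound on the number of bad pairs does not preclude this.

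The paper avoids both problems by an iterated edge-deletion (``cleaning'') argument rather than a counting argument. Setting $c=\frac{d\xi^3-\delta}{6}n$, one starts from $F_1=H[U',V',W']$ and repeatedly deletes all crossing edges through any pair whose crossing pair-degree is at most $c$. Each crossing pair triggers at most one deletion, and each deletion removes at most $c$ edges, so at most $3cn^2$ edges are removed in total, which is at most half of $e^{\times}(F_1)\ge (d\xi^3-\delta)n^3$. Some crossing edge thus survives in the final hypergraph $F_j$, and by construction every crossing pair contained in an edge of $F_j$ has degree at least $c$ there. A maximal alternating path in $F_j$ therefore cannot stall before using $c$ vertices from $U'$, yielding a path of length $3c-2=\frac{d\xi^3-\delta}{2}n-2$ that contradicts maximality. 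This argument is robust to an arbitrary distribution of bad pairs, which is exactly the case your counting step fails to handle.

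Two smaller points. The lemma does not assert that $H$ is $3$-partite, so the assertion ``every tight edge meets each of $U,V,W$ in exactly one vertex'' is unjustified; the quasirandomness hypothesis only constrains crossing edges, and the fix is to restrict the extremal family to \emph{alternating} paths $u_1v_1w_1u_2v_2w_2\dots$, as the paper does (which also makes the three leftover sets have exactly equal size, removing the need for your constant $K$). Also, your $\mathcal{P}$ allows paths of length at least $m$, while the paper fixes the length exactly; this is harmless, but fixing the length makes the leftover-size count cleaner.
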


\begin{proof}[Proof of Lemma \ref{good_triplets_lemma}]
	For convenience set~$c=\frac{d\xi ^3 -\delta}{6}n$.
	Let~$\mathcal{P}$ be a maximal set of vertex-disjoint paths of length~$3 c-2$ in~$H$, where each path takes alternatingly vertices from each partition class, i.e., each path is of the form $$u_1v_1w_1u_2v_2w_2 \dots u_{ c }v_{c }w_{c }$$ with $u_i \in U, v_i \in V, w_i \in W$.
	
	Assume that~$\vert V\vert - \left\vert \bigcup_{P\in\mathcal{P}} V(P) \right\vert > 3\xi n$. Then the sets $$U':=U\setminus \bigcup_{P\in\mathcal{P}} V(P), V':=V\setminus \bigcup_{P\in\mathcal{P}} V(P), W':=W\setminus \bigcup_{P\in\mathcal{P}} V(P)$$ satisfy $\left\vert U'\right\vert, \left\vert V' \right\vert, \left\vert W' \right\vert > \xi n$.
	
	Next, we will delete all the edges that contain vertex pairs of small pair degree.
	With the edges that still remain after this process we can build a path of the required length. 
	
	We start with $F_1=H\left[ U',V',W'\right]$ and set $F_{i+1}$, for $i\geq 1$, as the hypergraph obtained from~$F_{i}$ by deleting all edges containing a  vertex pair $xy$ with $d_{F_i}^{\times}(x,y) \leq  c$, where~${d_{F_i}^{\times}(x,y) = \left\vert\left\{e\in E\left(F_i\right): x,y\in e,\left\vert e\cap U'\right\vert = \left\vert e\cap V'\right\vert = \left\vert e\cap W' \right\vert = 1\right\}\right\vert}$.
	This process stops with a hypergraph~$F_j$ in which for all~$x,y \in V\left( F_j\right)$, we either have~${d_{F_j}^{\times}(x,y)=0}$ or~$d_{F_j}^{\times}(x,y)\geq c$.
	The deletion condition guarantees $$e^{\times}(F_1)-e^{\times}\left( F_j\right) \leq 3cn^2 \,,$$ with~${e^{\times}\left( F_i\right) = \left\vert\left\{e\in E\left(F_i\right): \left\vert e\cap U'\right\vert = \left\vert e\cap V'\right\vert = \left\vert e\cap W' \right\vert = 1\right\}\right\vert}$, and the quasirandomness of~$U,V,W$ gives that~${e^{\times}(F_1)=e\left(U',V',W'\right) \geq \left( d\xi ^3 - \delta\right) n^3}$.
	Thus, there still exists an edge~$u_1v_1w_1$ in $F_j$ with $u_1\in U'$, $v_1\in V'$ and~$w_1 \in W'$.
	But this means that there is a path of length~$3c -2$ in~$F_j$: Let~$P^*=u_1v_1w_1 \dots u_kv_kw_k$ be a maximal path in~$F_j$ with~${u_i \in U', v_i \in V'}$ and~$w_i \in W'$, for all~${i \in [k]}$ (note that $k\geq 1$).
	Assuming~$k< c$ for a contradiction, less than~$c$ vertices of~$U'$ appear in~$P^*$.
	But since $v_kw_k$ is contained in the edge $u_kv_kw_k \in E^{\times}\left( F_j\right)$, we actually have that~${d_{F_j}^{\times}\left(v_k,w_k\right)\geq c}$, whence there is a~${u_{k+1} \in U'\setminus V\left( P^*\right)}$ such that~$P^*u_{k+1}$ is a path in~$F_j$.
	
	The same argument applied to $w_ku_{k+1}$ gives a $v_{k+1} \in V'$ such that $P^*u_{k+1}v_{k+1}$ is a path in $F_j$ and finally applying the argument to $u_{k+1}v_{k+1}$ gives rise to a $w_{k+1} \in W'$ such that the path $P^*u_{k+1}v_{k+1}w_{k+1}$ exists in $F_j$ and thus contradicts the maximality of~$P^*$, telling us that~$P^*$ actually contains an alternating path of length~$3c-2$.
	That, on the other hand, gives us another alternating path of length at least~$3 c -2$ that is vertex-disjoint to all paths in~$\mathcal{P}$ and, therefore, contradicts the maximality of $\mathcal{P}$.
	So we indeed have~${\vert V\vert - \left\vert\bigcup_{P\in\mathcal{P}} V(P)\right\vert \leq 3\xi n}$.
\end{proof}

As mentioned before, we later want to find an almost perfect matching in a reduced hypergraph whose edges represent ``good'' triplets as in Lemma~\ref{good_triplets_lemma}.
Then ``translating back'' those edges in the matching will give us a set of (not too many) paths in~$H$ which almost covers all vertices.
To find an almost perfect matching in a hypergraph satisfying the pair degree condition in Theorem~\ref{main_theorem} for almost all pairs, we look at a maximal matching in which the sum of the vertices not contained in it is also maximal. 
This should give us the best chance to enlarge the matching if too many vertices would be left over, deriving a contradiction. 
A similar maximisation idea has also been used in~\cite{Treglown} when a degree sequence condition was given for a graph. 
The following Lemma will later guarantee the existence of an almost perfect matching in the reduced hypergraph.
\begin{lemma}[Matching]\label{matching_lemma}
	Let~$1/n\ll\alpha,\beta$.
	If $H=\left( [n],E\right)$ is a~$3$-graph,~$G_H$ a graph on vertex set~$[n]$ with maximum degree~$\Delta \left( G_H \right) \leq \beta n$ and~$H$ satisfies~$d(i,j) \geq \min\left( i,j, \frac{n}{2}\right) + \alpha n$, for all~$ij \in [n]^{(2)}$ with~$ij \notin E\left( G_H\right)$, then~$H$ has a matching~$M$ with~${v(M) \geq \left( 1-3\beta\right) n}$.
\end{lemma}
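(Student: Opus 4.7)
The plan is to argue by contradiction. Take $M$ to be a maximum matching in $H$ and, among all such matchings, one that additionally maximises $S(M) := \sum_{v \in U} v$, where $U := [n] \setminus V(M)$. Suppose for contradiction that $|U| > 3\beta n$. Our objective is to produce an edge of $H$ inside $U$, which, when added to $M$, contradicts the maximality of $|M|$.

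Two swap-optimality conditions follow from this choice of $M$. Firstly, a \emph{single-vertex swap}: for every $u \in U$, every $M$-edge $e = xyz$, and every $c \in e$ with $c > u$, one has $u \cdot (e \setminus \{c\}) \notin E$ --- otherwise swapping $u$ in for $c$ yields a maximum matching with $S$-value strictly increased by $c - u > 0$. Secondly, a \emph{two-vertex swap}: for distinct $u_1, u_2 \in U$, every $M$-edge $e$, and every $c \in e$ with $\sum(e \setminus \{c\}) > u_1 + u_2$, one has $\{u_1, u_2, c\} \notin E$ --- otherwise the analogous swap strictly increases $S$. Now assume that $H[U]$ has no edge; then every common neighbour of any pair $u_1 u_2 \in U^{(2)}$ must lie in $V(M)$. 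For pairs with $u_1 u_2 \notin E(G_H)$, the pair-degree condition gives $d(u_1, u_2) \geq \min(u_1, u_2, n/2) + \alpha n$, while the two-vertex swap forces each such common neighbour $w$ to satisfy $\sum(e_w \setminus \{w\}) \leq u_1 + u_2$, where $e_w \in M$ is the edge containing $w$. Since $\sum(e_w \setminus \{w\}) \leq t$ forces both partners of $w$ in $e_w$ to lie in $[t]$, and since distinct $M$-edges are vertex-disjoint, a careful accounting by contribution level (an $M$-edge contributes $0,1,2,$ or $3$ depending on how many of $x+y$, $x+z$, $y+z$ are $\leq t$) gives $|\{w \in V(M) : \sum(e_w \setminus \{w\}) \leq t\}| \leq t$. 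Combining,
\[ \min(u_1, u_2, n/2) + \alpha n \leq u_1 + u_2 \]
for every pair $u_1 u_2 \in U^{(2)}$ with $u_1 u_2 \notin E(G_H)$.

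To close the argument, we couple this with $\Delta(G_H) \leq \beta n$: the displayed inequality shows that any two vertices $u_1, u_2 \in U \cap [1, \alpha n)$ must be adjacent in $G_H$, so that this set spans a clique in $G_H$ and hence has size $\leq \beta n + 1$. Thus most of $U$ sits above $\alpha n$, and a finer double-counting is then needed, using the single-vertex swap to bound edges of the form $u \cdot \{a,b\}$ with $u \in U$ and $\{a,b\}$ contained in a single $M$-edge, and the two-vertex swap to bound the edges $u_1 u_2 w$ with $u_1,u_2 \in U$, $w \in V(M)$; this is to be balanced against the pair-degree lower bound $\sum_{u \in U} d_H(u) = \Omega(\alpha|U|n^2)$ to force an edge inside $H[U]$. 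The main obstacle is precisely this last step: the simple pairwise inequality only rules out ``small--small'' pairs in $U$, and controlling the larger elements of $U$ requires carefully juggling the two swap conditions, the sparsity of $G_H$, and the total edge count, to guarantee that enough common neighbours of some $U$-pair end up back in $U$ and produce the desired edge.
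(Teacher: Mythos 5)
Your framework (extremal matching and swap arguments, assuming for contradiction that $U := [n]\setminus V(M)$ is too large) is the right one, but there are two genuine gaps. First, maximising $\sum_{v\in U}v$ is too weak a tie-breaker. From it, a swap replacing an $M$-edge $abc$ (with $a\in N(u_1,u_2)$) by $au_1u_2$ only yields $b+c\le u_1+u_2$, which loses all content when $u_2$ is large. The paper instead maximises $[n]\setminus V(M)$ in the lexicographic order on $\cP([n])$; the same swap then forces $\min\{b,c\}\le\min(u_1,u_2)$, so for each true pair $u_1u_2$ with $\min(u_1,u_2)\le n/2$ at most $\min(u_1,u_2)$ matching edges contain exactly one common neighbour of $u_1,u_2$, and hence --- using $d(u_1,u_2)\ge\min(u_1,u_2)+\alpha n$ --- at least $\alpha n/3$ matching edges contain two or more (the case $\min(u_1,u_2)>n/2$ is immediate since $e(M)\le n/3$).

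Second, and this is the piece your ``finer double-counting'' alludes to without supplying: if an $M$-edge contains at least two common neighbours of a true pair $u_1u_2$, it can contain no common neighbour of any other true pair $u_1'u_2'$ in $U$, since otherwise one could pick distinct vertices $a\in N(u_1,u_2)$ and $b\in N(u_1',u_2')$ inside that edge and replace it by the two edges $au_1u_2$ and $bu_1'u_2'$, strictly enlarging $M$. Consequently the families of ``heavy'' edges attached to distinct true pairs are pairwise disjoint. Since $\Delta(G_H)\le\beta n$ and $|U|\ge 3\beta n$, one can greedily find $\beta n$ vertex-disjoint true pairs in $U$, giving at least $\frac{\alpha n}{3}\cdot\beta n$ distinct edges of $M$ and contradicting $e(M)\le n/3$ once $n$ is large. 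Without this disjointness, the single-pair inequality $\min(u_1,u_2,n/2)+\alpha n\le u_1+u_2$ that you derived only forbids both coordinates of a true pair in $U$ from lying below $\alpha n$, and as you observe that does not close the argument.
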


\begin{proof}[Proof of Lemma \ref{matching_lemma}]
	Without restriction let $\alpha \ll 1$ and $\beta < 1/3$ and let~$H,G_H$ be given as in the statement.
	For matchings~$M_1,M_2\subseteq H$ of maximal size, we write~$M_1\prec M_2$ if~$[n]\setminus V(M_1)\leq_{\text{lex}}[n]\setminus V(M_2)$, where~$\leq_{\text{lex}}$ is the usual lexicographic order on~$\cP([n])$, i.e.,~$A\leq B$ if~$\min A\triangle B\in A$.
	Now, let~$M\subseteq H$ be a matching of maximal size which is (subject to being of maximal size) maximal with respect to~$\prec$.
	Assuming the statement is false, gives an~${A\subseteq [n]\setminus V(M)}$ with~$\vert A \vert \geq 3\beta n$. Let us call a pair \textit{true} if it is not an edge in~$G_H$. 
	Since~$\Delta \left( G_H \right) \leq \beta n$, we can find~$2\beta n$ distinct vertices~$v_1, \dots, v_{\beta n}, w_1,\dots , w_{\beta n}\in A$ such that all the pairs~$v_iw_i$ are true. 
	Without restriction assume that~$v_i< w_i$.
	Notice that all the neighbours of each such pair lie inside~$V(M)$, otherwise adding the respective edge to~$M$ would lead to a larger matching.
	In the following, we will show two properties and afterwards deduce the statement from them.
	
	Firstly, we have that for each~$v_iw_i$, there are at least~$\frac{\alpha n}{3}$ edges in~$M$ in which~$v_iw_i$ has at least two neighbours: Let us first consider a pair~$v_iw_i$ with~$v_i \leq \frac{n}{2}$. 
	For any edge~$abc$ of the matching with~$a \in N\left( v_i,w_i\right)$, we have that~$\min\{b,c\}\leq v_i$ as otherwise~${E(M) \setminus \{abc\} \cup \{av_iw_i\}}$ would be the edge set of a matching~$M'$ with the same size as~$M$ but with~$M\prec M'$, contradicting our choice of~$M$.
	This means that in each edge of~$M$ which contains only one neighbour of~$v_iw_i$ there is one vertex~$\leq v_i$.
	Thus, (and since all those edges are disjoint), at most~$v_i$ neighbours of~$v_iw_i$ can lie in edges that contain no further neighbour of~$v_iw_i$.
	Hence, recalling~$d\left( v_i,w_i\right) \geq v_i + \alpha n$, at least~$\frac{\alpha n}{3}$ edges in~$M$ contain at least two neighbours of~$v_iw_i$. 
	
	For a pair~$v_iw_i$ with~$v_i \geq n/2$, there exist at least~$\frac{\alpha n}{3}$ edges in $M$ containing more than one neighbour of~$v_iw_i$ as well since~$d\left( v_i,w_i\right) \geq \frac{n}{2}+\alpha n$ but~${e(M) \leq n/3}$.
	
	Secondly, note that any edge of~$M$ that contains at least two neighbours of one true pair~$v_iw_i$ cannot contain a neighbour of any other true pair~$v_jw_j$: Assume for contradiction there were true pairs~$v_iw_i$ and~$v_jw_j$ together with an edge~$abc \in E(M)$ such that~$a\in N\left( v_i,w_i\right)$ and $\left\vert\{abc\} \cap N\left( v_j,w_j\right)\right\vert \geq 2$.
	Then~$b$ or~$c$, without restriction~$b$, is a neighbour of~$v_jw_j$ and~$E(M) \setminus \{abc\} \cup \{av_iw_i, bv_jw_j\}$ is the edge set of a matching in~$H$ contradicting the maximal size of~$M$.
	
	Summarised, for each of the~$\beta n$ true pairs~$v_iw_i$ in~$[n] \setminus V(M)$, we get a set of at least~$\frac{\alpha n}{3}$ edges in~$M$ that contain more than one neighbour of the respective pair and thus all those sets of edges are pairwise disjoint. 
	Therefore, we have~$\frac{\alpha n}{3}\times\beta n$ distinct edges in~$M$ which is a contradiction to~$1/n\ll\alpha,\beta$.
	So~$M$ was indeed a matching satisfying~$v(M)\geq\left( 1-3\beta\right) n$.
\end{proof}

We are now ready to prove Proposition \ref{long_path_lemma}. For that we will apply the Weak Regularity Lemma to~$H$ (actually to a slightly smaller subgraph), obtain a pair degree condition for the reduced hypergraph and hence find a matching in it by the previous Lemma. Lastly, we will ``unfold'' the edges of that matching to paths in~$H$ by Lemma~\ref{good_triplets_lemma} and connect these to a long path by the Connecting Lemma.

\begin{proof}[Proof of Proposition \ref{long_path_lemma}]
	Let~$\alpha ,\vartheta$ be given as in the Proposition and set~$\alpha ' = \alpha - \vartheta - \vartheta ^2$. 
	Next choose~$\xi , \delta , t_0$ such that we have~$1/t_0\ll\delta\ll\xi\ll\vartheta\ll\alpha '$.
	Applying the Weak Regularity Lemma~\ref{weak_regularity_lemma} to~$\delta$ and~$t_0$ gives us a~$T_0$ and by the hierarchy in the Proposition, we may assume~$1/n \ll 1/T_0$. 
	Now let~$H$,~$\mathcal{R}$, and~$P_A$ be given as in the statement. 
	Notice that~$H'=H\big[[n]\setminus\left(\mathcal{R}\cup V\left(P_A\right)\right)\big]$ after a renaming of the vertices can be seen as a~$3$-graph~$H'=\left([n'],E'\right)$ with~$n'\geq\left(1-\vartheta ^2 - \vartheta\right) n$ and satisfying the usual degree condition: $d(i,j) \geq \min \left( i,j,\frac{n'}{2}\right) + \alpha ' n'$ for all $ij \in [n']^{(2)}$.
	
	For~$H'$, the statement of the Weak Regularity Lemma provides an integer~$t\in[t_0,T_0]$ and a partition~$V = V_0 \dot\cup V_1 \dot\cup V_2 \dot\cup \dots \dot\cup V_t$ satisfying all three points of Lemma \ref{weak_regularity_lemma}.
	Setting~${m=\left\vert V_1 \right\vert = \dots = \left\vert V_t \right \vert}$, we have that~$\frac{n'}{t}\geq m \geq \frac{1-\delta}{t}n'$ and recall that~${\left\vert V_0 \right\vert \leq \delta n'}$.
	Note that for~$v_i \in V_i$, we have $v_i \geq i\cdot m -\frac{n'}{t_0}$.
	Summarised, we have the following hierarchy: 
	\begin{align}\label{hierarchy}
		\frac{1}{n'}\ll\frac{1}{T_0},\frac{1}{t},\frac{1}{t_0}\ll\delta\ll\xi\ll\vartheta\ll\alpha'\ll 1
	\end{align}
	Let us write $e^{\times}\left(V^{ijk}\right) =  \left\vert\left\{e\in E': \left\vert e\cap V_i\right\vert = \left\vert e\cap V_j\right\vert=\left\vert e\cap V_k\right\vert=1\right\} \right\vert$ for the number of crossing edges in $V^{ijk}$ and we call a triplet~$V^{ijk}$ dense, if~$e^{\times}\left( V^{ijk}\right) \geq \frac{\alpha 'm^3}{2}$.
	
	Now we will show that we can almost ``transfer'' the pair degree condition to a reduced hypergraph.
	We will do this in two steps: First, we show that every pair $V_iV_j$ belongs to many dense triplets $V^{ijk}$, and second, we show that we can almost keep that up when restricting ourselves to quasirandom triplets.
	\begin{claim}
		For every $ij\in [t]^{(2)}$, there are at least~$\min \left(i,j,\frac{t}{2}\right) + \frac{\alpha 't}{3}$ many $k\in [t]-\{i,j\}$ such that $V^{ijk}$ is a dense triplet.
	\end{claim}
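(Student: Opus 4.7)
The plan is to double-count the sum $\Sigma := \sum_{v_i \in V_i,\, v_j \in V_j} d(v_i, v_j)$, comparing a lower bound coming from the pair degree condition in $H'$ with an upper bound in terms of the crossing-edge counts $e^{\times}(V^{ijk})$.

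For the lower bound, I first need an index estimate for vertices in $V_i$. The ordering property $\max V_\ell \leq \max V_i$ for all $\ell \leq i$ combined with the fact that $V_1, \dots, V_i$ are disjoint of size $m$ forces $\max V_i \geq im$; the spread bound $\max V_i - \min V_i \leq n'/t_0$ then gives $v_i \geq im - n'/t_0$ for every $v_i \in V_i$. Hence $\min(v_i, v_j, n'/2) \geq m \cdot \min(i, j, t/2) - n'/t_0$, and summing the pair degree bound yields
\begin{equation*}
\Sigma \;\geq\; m^2\bigl( m \cdot \min(i, j, t/2) + \alpha' n' - n'/t_0 \bigr).
\end{equation*}

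For the upper bound, I split $\Sigma$ according to where the third vertex $x$ of an edge $v_i v_j x$ lies: the contribution from $x \in V_0$ is at most $m^2 \vert V_0\vert \leq \delta m^2 n'$, the contribution from $x \in V_i \cup V_j$ is at most $2m^3$, and the remaining contribution is exactly $\sum_{k \notin \{i,j\}} e^{\times}(V^{ijk})$. Let $K$ denote the number of dense $V^{ijk}$ with $k \in [t] \setminus \{i,j\}$. Any triplet contributes at most $m^3$ crossing edges, while a non-dense triplet contributes strictly less than $\alpha' m^3/2$, so
\begin{equation*}
\sum_{k \notin \{i,j\}} e^{\times}(V^{ijk}) \;\leq\; K \cdot m^3 + (t - K - 2) \cdot \alpha' m^3 / 2.
\end{equation*}

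Combining the two bounds, dividing through by $m^3$, and using $t \leq n'/m \leq t/(1-\delta)$, a short rearrangement gives
\begin{equation*}
K\bigl(1 - \tfrac{\alpha'}{2}\bigr) \;\geq\; \min(i, j, t/2) + \tfrac{\alpha' t}{2} - O\bigl(\delta t + t/t_0 + 1\bigr),
\end{equation*}
from which $K \geq \min(i, j, t/2) + \alpha' t/3$ follows comfortably by the hierarchy~\eqref{hierarchy}. The only real work is bookkeeping: one must carefully account for the error contributions from $V_0$, from edges with two vertices in $V_i$ or $V_j$, from the spread inside each $V_i$, and from the factor $1 - \alpha'/2$, and verify that the slack between the main term $\alpha' t/2$ and the target $\alpha' t/3$ absorbs all of them — which it does, since $\delta, 1/t_0 \ll \alpha'$.
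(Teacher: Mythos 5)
Your proposal is correct and follows essentially the same route as the paper: both arguments lower-bound the number of edges through a $V_i$--$V_j$ pair using the pair degree condition together with the ordering property $v_i \geq im - n'/t_0$, upper-bound that count by splitting the third vertex according to whether it lands in a dense triplet, a non-dense triplet, or in $V_0\cup V_i\cup V_j$, and compare. The paper phrases it as a proof by contradiction on the set $S$ of crossing edges rather than a direct rearrangement of the sum $\Sigma$ of pair degrees, but the substance is the same; if anything your version is marginally more careful, since you explicitly charge the $x\in V_0$ contribution ($\leq \delta m^2 n'$) to the error term, whereas the paper absorbs it silently into the slack of the contradiction.
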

	\begin{proof}
		Suppose there is a pair~$V_iV_j$,~$ij\in [t]^{(2)}$, belonging to less than~$\min \left(i,j,\frac{t}{2}\right) + \frac{\alpha 't}{3}$ dense triplets~$V^{ijk}$.
		Let~$S$ be the set of hyperedges in~$H'$ that contain one vertex in~$V_i$, one in~$V_j$ and a third vertex outside of~$V_i \dot\cup V_j$.
		By invoking the pair degree condition of~$H'$ and with the hierarchy (\ref{hierarchy}), we get that
		\begin{align*}
			\left\vert S \right\vert \geq & m^2 \left[ \min\left(i\cdot m -\frac{n'}{t_0},j\cdot m -\frac{n'}{t_0},\frac{n'}{2}\right) +\alpha 'n' -2m\right] \\
			> &\frac{n'^3}{t^2} \left(\min\left( \frac{i}{t},\frac{j}{t}, \frac{1}{2}\right) +\frac{6}{7} \alpha '\right)
		\end{align*}
		We will derive a contradiction by finding a smaller upper bound on~$\vert S\vert$.
		To this aim, we split~$S$ into two parts.
		By~$S_1$ let us denote the set of those edges in~$S$ that lie in a dense triplet~$V^{ijk}$, for some~$k\in [t]\setminus \{i,j\}$, (we say an edge~$e$ lies or is in~$V^{ijk}$ if we have~$\vert e\cap V_i\vert = \vert e\cap V_j\vert = \vert e\cap V_k\vert = 1$).
		Since in one triplet there are at most~$m^3$ edges and by assumption~$V_iV_j$ does not belong to many dense triplets, we get
		$$\left\vert S_1\right\vert < \left(\min \left(i,j,\frac{t}{2}\right) + \frac{\alpha 't}{3}\right) m^3 \leq \frac{n'^3}{t^2}\left(\min\left(\frac{i}{t},\frac{j}{t},\frac{1}{2}\right) +\frac{\alpha '}{3}\right)$$
		Let $S_2=S\setminus S_1$ be the set of edges in $S$ lying in triplets that are not dense. There are less than $\frac{\alpha '}{2}m^3$ crossing edges in each triplet that is not dense and $V_iV_j$ belongs to at most~$t$ triplets. Hence
		$$ \left\vert S_2\right\vert < \frac{\alpha '}{2}m^3 \times t \leq \frac{n'^3}{t^2} \frac{\alpha '}{2}.$$
		Summarised, we have 
		\begin{align*}
		\frac{n'^3}{t^2} \left(\min\left( \frac{i}{t},\frac{j}{t}, \frac{1}{2}\right) +\frac{6}{7} \alpha '\right)
		< \vert S\vert = \left\vert S_1\right\vert + \left\vert S_2\right\vert 
		< \frac{n'^3}{t^2}\left(\min\left(\frac{i}{t},\frac{j}{t},\frac{1}{2}\right) +\frac{5\alpha '}{6}\right)\,,
		\end{align*}
		which is a contradiction.
	\end{proof}
	From the Weak Regularity Lemma we also get that in total at most $\delta t^3$ triplets $V^{ijk}$ are not~$\delta$-quasirandom.
	
	Let us now complete the ``reduction'' of the hypergraph and notice that we can find an almost perfect matching in the reduced hypergraph.
	Denote by~$D$ the hypergraph on the vertex set~$[t]$ with~$ijk$ being an edge if and only if the triplet~$V^{ijk}$ is dense.
	Let, on the other hand,~$IR$ be the hypergraph on the vertex set~$[t]$ with~$ijk$ being an edge if and only if~$V^{ijk}$ is not weakly~$\delta$-quasirandom in~$H'$.
	In the following, we will remove a few vertices in such a way that~$D-IR$ induced on the remaining vertices satisfies our pair degree condition for almost all pairs.
	
	We call a pair~$ij \in [t]^2$ \textit{malicious pair} if it belongs to more than~$\sqrt{\delta }t$ edges of~$IR$. 
	Since~${e(IR) \leq \delta t^3}$, there are at most~$3\sqrt{\delta }t^2$ malicious pairs.
	Let~$B$ be the graph on vertex set~$[t]$ in which the edges are given by the malicious pairs.
	We call a vertex~$i$ \textit{malicious vertex} if~$d_B(i)>\delta ^{1/4}t$, i.e., if it belongs to many malicious pairs.
	The upper bound on the number of malicious pairs implies that there are at most~$6\delta ^{1/4}t$ malicious vertices.
	Now we remove these malicious vertices and set~${D' := D\big[[t]\setminus\{v\in [t]: v \text{ malicious}\}\big]}$ and~${B'=B\big[[t]\setminus\{v\in [t]: v \text{ malicious}\}\big]}$.
	
	The reduced hypergraph we looked for is now~$K=D'-IR$, in which edges encode dense,~$\delta$-quasirandom triplets. 
	In~$K$, every pair~$ij \in V(K)^{(2)}$ with~$ij \notin E\left[B'\right]$ satisfies 
	$$d_K(i,j) \geq \min\left(i,j,\frac{t}{2}\right) + \left(\frac{\alpha '}{3} - 6\delta ^{1/4} - \sqrt{\delta}\right) t\geq \min\left( i,j, \frac{t}{2}\right) +\frac{\alpha '}{4}t .$$
	Thus, we have that the graph~$G_K$ on vertex set~$V(K)$ with~$ij$ being an edge if and only if~$ij$ does not satisfy the degree condition~$d_K(i,j) \geq \min\left( i,j, \frac{v(K)}{2}\right) +\frac{\alpha '}{4} v(K)$ is a subgraph of~$B'$.
	Therefore, and since~$v(K)\geq(1-6\delta^{1/4})t$, we have $${\Delta \left( G_K\right) \leq \Delta \left( B'\right) \leq \delta ^{1/4} t}\leq 2\delta^{1/4}\vert V(K)\vert$$ and we can apply Lemma \ref{matching_lemma} to~$K$ with~$\frac{\alpha '}{4}$ in place of~$\alpha$ and~$2\delta ^{1/4}$ instead of~$\beta$ and obtain a matching~$M$ in~$K$ covering all but at most~$6\delta ^{1/4}t$ vertices of~$K$.
	
	Finally, notice that each triplet~$V^{ijk}$ with~$ijk$ being an edge in~$K$ is~$\left(\delta ,d_{ijk}\right)$-quasirandom with~$d_{ijk}\geq \frac{\alpha '}{2} - \delta \geq \frac{\alpha '}{3}$. 
	Hence, we may apply  Lemma \ref{good_triplets_lemma} (with~$\xi$ as in (\ref{hierarchy}),~$d_{ijk}\geq\frac{\alpha '}{3}$ in place of~$d$ and~$\delta$ as $\delta$) to each of the triplets $V^{ijk}$ that corresponds to an edge in~$M$.
	Doing so and recalling the definition of~$H'$, we notice that in~$H$ we can cover at least $$n-\left(\left( \delta + 6\delta ^{1/4} + 6\delta ^{1/4} +\xi \right) n' + \vert\mathcal{R}\vert + v\left( P_A\right)\right) \geq n-\left( 2\vartheta ^2 n + v\left( P_A\right)\right)$$ vertices with paths of length at least~$\frac{\frac{\alpha '}{3} \xi ^3 -\delta}{2}m-2$ that are all disjoint to~$\mathcal{R}$ and~$V\left(P_A\right)$.
	We can connect all those at most~${\frac{3t}{\frac{\alpha '}{3} \xi ^3 -\delta}}$ paths in~$H$ through~$\mathcal{R}$ to a path~$Q$ by Lemma~\ref{reservoir_preservation_lemma} since until we connect the last one we have still only used at most $$(L-2) \cdot \frac{3t}{\frac{\alpha '}{3} \xi ^3 -\delta} < \vartheta ^4 n$$ vertices from~$\mathcal{R}$ (recall the hierarchy (\ref{hierarchy})).
	In fact, we have that~$Q$ has at most a small intersection with~$\mathcal{R}$, that is,~$\left\vert V(Q) \cap \mathcal{R}\right\vert \leq \vartheta ^4 n$ and it covers many vertices, i.e.,~${v(Q) \geq \left( 1-2\vartheta ^2\right) n -v\left( P_A\right)}$. 
	Hence,~$Q$ is a path satisfying the claims in the statement.
\end{proof}

\section{Concluding Remarks}\label{concluding_remarks}
We would like to finish by pointing to some related problems. Firstly, as mentioned in the introduction, our result can be seen as a stepping stone towards a complete characterisation of those pair degree matrices that force a~$3$-graph to contain a Hamiltonian cycle.

Further, it seems possible to generalise our proof without too much effort for~$k$-uniform hypergraphs~$H=([n],E)$ with~$n$ large satisfying the~$(k-1)$-degree condition $$d_{k-1}(i_1,\dots ,i_{k-1})\geq\min\left(i_1,\dots,i_{k-1},\frac{n}{2}\right)+\alpha n \,,$$ where~$d_{k-1}(i_1,\dots ,i_{k-1})=\left\vert\right\{e\in E: \{i_1,\dots ,i_{k-1}\}\subseteq e\left\}\right\vert$.

Another very interesting problem is to get a similar result for the vertex degree, strengthening the result by Reiher, R\"odl, Ruci\'nski, Schacht, and Szemer\'edi in \cite{5/9}: Does every~$3$-graph $H=([n],E)$ with $d(i)\geq \min\left(\max\left(i,\gamma n\right),\frac{5}{9}n\right)+\alpha n$ for some~$\gamma < 5/9$ contain a Hamiltonian cycle if~$n$ is large? The proof of Theorem \ref{5/9_theorem} in \cite{5/9} depends on the existence of \textit{robust subgraphs} for every vertex, for which one needs the factor $5/9$.

Lastly, one could try to improve Theorem \ref{main_theorem} by weakening the pair degree condition to $d(i,j)\geq \min\left(i,j,\frac{n}{2}\right)$, i.e., without the additional~$\alpha n$ term, as R\"odl, Ruci\'nski, and Szemer\'edi did for the minimum pair degree condition in~\cite{Dirac_type_improved}.

\section{Acknowledgement}
This article is based on my master thesis from summer 2018 which was supervised by Christian~Reiher.
I would like to thank him for introducing me to the absorption method and to this problem.
Further, I would like to thank an anonymous referee for suggesting to try to improve the original result.

\begin{bibdiv}
	\begin{biblist}
		\bib{BoHy}{article}{
			title={A degree sequence strengthening of the vertex degree threshold for a perfect matching in 3-uniform hypergraphs}, 
			author={Bowtell, Candida},
			author={Hyde, Joseph},
			year={2020},
			eprint={2008.12222},
			archivePrefix={arXiv},
			primaryClass={math.CO}
		}
		
		\bib{loose_vertex}{article}{
			author={Bu\ss , Enno},
			author={H\`an, Hi\^{e}p},
			author={Schacht, Mathias},
			title={Minimum vertex degree conditions for loose Hamilton cycles in
				3-uniform hypergraphs},
			journal={J. Combin. Theory Ser. B},
			volume={103},
			date={2013},
			number={6},
			pages={658--678},
			issn={0095-8956},
			review={\MR{3127586}},
			doi={10.1016/j.jctb.2013.07.004},
		}
		
		\bib{hyper_reg1}{article}{
			AUTHOR = {Chung, Fan R. K.},
			TITLE = {Regularity lemmas for hypergraphs and quasi-randomness},
			JOURNAL = {Random Structures Algorithms},
			FJOURNAL = {Random Structures \& Algorithms},
			VOLUME = {2},
			YEAR = {1991},
			NUMBER = {2},
			PAGES = {241--252},
			ISSN = {1042-9832},
			MRCLASS = {05C65 (05C80)},
			MRNUMBER = {1099803},
			MRREVIEWER = {E. Jucovi\v{c}},
			DOI = {10.1002/rsa.3240020208},
			URL = {https://doi.org/10.1002/rsa.3240020208},
		}
	
		\bib{Chvatal}{article}{
			AUTHOR = {Chv\'atal, V.},
			TITLE = {On {H}amilton's ideals},
			JOURNAL = {J. Combinatorial Theory Ser. B},
			FJOURNAL = {Journal of Combinatorial Theory. Series B},
			VOLUME = {12},
			YEAR = {1972},
			PAGES = {163--168},
			MRCLASS = {05C99},
			MRNUMBER = {0294155},
			MRREVIEWER = {M. R. Garey},
		}
	
		\bib{loose_co_deg}{article}{
			author={Czygrinow, Andrzej},
			author={Molla, Theodore},
			title={Tight codegree condition for the existence of loose Hamilton
				cycles in 3-graphs},
			journal={SIAM J. Discrete Math.},
			volume={28},
			date={2014},
			number={1},
			pages={67--76},
			issn={0895-4801},
			review={\MR{3150175}},
			doi={10.1137/120890417},
		}
		
		\bib{Dirac}{article}{
			AUTHOR = {Dirac, G. A.},
			TITLE = {Some theorems on abstract graphs},
			JOURNAL = {Proc. London Math. Soc. (3)},
			FJOURNAL = {Proceedings of the London Mathematical Society. Third Series},
			VOLUME = {2},
			YEAR = {1952},
			PAGES = {69--81},
			ISSN = {0024-6115},
			MRCLASS = {56.0X},
			MRNUMBER = {0047308},
			MRREVIEWER = {W. T. Tutte},
			DOI = {10.1112/plms/s3-2.1.69},
			URL = {https://doi.org/10.1112/plms/s3-2.1.69},
		}
	
		\bib{hyper_reg2}{article}{
			author={Frankl, P.},
			author={R\"{o}dl, V.},
			title={The uniformity lemma for hypergraphs},
			journal={Graphs Combin.},
			volume={8},
			date={1992},
			number={4},
			pages={309--312},
			issn={0911-0119},
			review={\MR{1204114}},
			doi={10.1007/BF02351586},
		}
	
		\bib{loose_vertex2}{article}{
			author={Han, Jie},
			author={Zhao, Yi},
			title={Minimum vertex degree threshold for loose Hamilton cycles in
				3-uniform hypergraphs},
			journal={J. Combin. Theory Ser. B},
			volume={114},
			date={2015},
			pages={70--96},
			issn={0095-8956},
			review={\MR{3354291}},
			doi={10.1016/j.jctb.2015.03.007},
		}
		
		\bib{probabilistic1}{book}{
			author={Janson, Svante},
			author={\L uczak, Tomasz},
			author={Rucinski, Andrzej},
			title={Random graphs},
			series={Wiley-Interscience Series in Discrete Mathematics and
				Optimization},
			publisher={Wiley-Interscience, New York},
			date={2000},
			pages={xii+333},
			isbn={0-471-17541-2},
			review={\MR{1782847}},
			doi={10.1002/9781118032718},
		}
		
		\bib{Katona_Kierstead}{article}{
			author={Katona, Gyula Y.},
			author={Kierstead, H. A.},
			title={Hamiltonian chains in hypergraphs},
			journal={J. Graph Theory},
			volume={30},
			date={1999},
			number={3},
			pages={205--212},
			issn={0364-9024},
			review={\MR{1671170}},
			doi={10.1002/(SICI)1097-0118(199903)30:3<205::AID-JGT5>3.3.CO;2-F},
		}
		
		\bib{loose_min_deg}{article}{
			author={K\"{u}hn, Daniela},
			author={Osthus, Deryk},
			title={Loose Hamilton cycles in 3-uniform hypergraphs of high minimum
				degree},
			journal={J. Combin. Theory Ser. B},
			volume={96},
			date={2006},
			number={6},
			pages={767--821},
			issn={0095-8956},
			review={\MR{2274077}},
			doi={10.1016/j.jctb.2006.02.004},
		}
	
		\bib{LSM}{article}{
			title={Minimum degree conditions for tight Hamilton cycles}, 
			author={Lang, Richard},
			author={Sanhueza-Matamala, Nicolás},
			year={2020},
			eprint={2005.05291},
			archivePrefix={arXiv},
			primaryClass={math.CO}
		}
	
		\bib{probabilistic2}{article}{
			author={McDiarmid, Colin},
			title={On the method of bounded differences},
			conference={
				title={Surveys in combinatorics, 1989},
				address={Norwich},
				date={1989},
			},
			book={
				series={London Math. Soc. Lecture Note Ser.},
				volume={141},
				publisher={Cambridge Univ. Press, Cambridge},
			},
			date={1989},
			pages={148--188},
			review={\MR{1036755}},
		}
	
		\bib{tyh}{article}{
			title={On Hamiltonian cycles in hypergraphs with dense link graphs}, 
			author={Polcyn, Joanna},
			author={Reiher, {Chr}istian},
			author={Rödl, Vojtěch},
			author={Schülke, Bjarne },
			year={2020},
			eprint={2007.03820},
			archivePrefix={arXiv},
			primaryClass={math.CO}
		}
		
		\bib{Posa}{article}{
			AUTHOR = {P\'osa, L.},
			TITLE = {A theorem concerning {H}amilton lines},
			JOURNAL = {Magyar Tud. Akad. Mat. Kutat\'o Int. K\"ozl.},
			VOLUME = {7},
			YEAR = {1962},
			PAGES = {225--226},
			MRCLASS = {05.40},
			MRNUMBER = {0184876},
			MRREVIEWER = {A. \'Ad\'am},
		}
	
		\bib{5/9}{article}{
			author={Reiher, {Chr}istian},
			author={R\"{o}dl, Vojt\v{e}ch},
			author={Ruci\'{n}ski, Andrzej},
			author={Schacht, Mathias},
			author={Szemer\'{e}di, Endre},
			title={Minimum vertex degree condition for tight Hamiltonian cycles in
				3-uniform hypergraphs},
			journal={Proc. Lond. Math. Soc. (3)},
			volume={119},
			date={2019},
			number={2},
			pages={409--439},
			issn={0024-6115},
			review={\MR{3959049}},
			doi={10.1112/plms.12235},
		}
	
		\bib{K43minus}{article}{
			author={Reiher, {Chr}istian},
			author={R\"{o}dl, Vojt\v{e}ch},
			author={Schacht, Mathias},
			title={On a Tur\'{a}n problem in weakly quasirandom 3-uniform hypergraphs},
			journal={J. Eur. Math. Soc. (JEMS)},
			volume={20},
			date={2018},
			number={5},
			pages={1139--1159},
			issn={1435-9855},
			review={\MR{3790065}},
			doi={10.4171/JEMS/784},
		}
	
		\bib{Dirac_type}{article}{
			author={R\"{o}dl, Vojt\v{e}ch},
			author={Ruci\'{n}ski, Andrzej},
			author={Szemer\'{e}di, Endre},
			title={A Dirac-type theorem for 3-uniform hypergraphs},
			journal={Combin. Probab. Comput.},
			volume={15},
			date={2006},
			number={1-2},
			pages={229--251},
			issn={0963-5483},
			review={\MR{2195584}},
			doi={10.1017/S0963548305007042},
		}
		
		\bib{Dirac_type_improved}{article}{
			author={R\"{o}dl, Vojtech},
			author={Ruci\'{n}ski, Andrzej},
			author={Szemer\'{e}di, Endre},
			title={Dirac-type conditions for Hamiltonian paths and cycles in
				3-uniform hypergraphs},
			journal={Adv. Math.},
			volume={227},
			date={2011},
			number={3},
			pages={1225--1299},
			issn={0001-8708},
			review={\MR{2799606}},
			doi={10.1016/j.aim.2011.03.007},
		}
		
		\bib{StadenTreglown}{article}{
			author={Staden, Katherine},
			author={Treglown, Andrew},
			title={On degree sequences forcing the square of a Hamilton cycle},
			journal={SIAM J. Discrete Math.},
			volume={31},
			date={2017},
			number={1},
			pages={383--437},
			issn={0895-4801},
			review={\MR{3615461}},
			doi={10.1137/15M1033101},
		}
	
			\bib{hyper_reg3}{article}{
			author    = {Steger, A.},
			title     = {Die {K}leitman-{R}othschild {M}ethode},
			journal   = {Ph.D. thesis, Forschungsinstitut f\"ur Diskrete Mathematik, Rheinische Friedrich-Wilhelms-Universit\"at Bonn},
			year      = {1990},
			month		= {March},
			volume    = {114},
			number	= {1},
			pages     = {70-96},
			doi		= {10.1016/j.jctb.2015.03.007}
		}
	
		\bib{Treglown}{article}{
			AUTHOR = {Treglown, Andrew},
			TITLE = {A degree sequence {H}ajnal-{S}zemer\'edi theorem},
			JOURNAL = {J. Combin. Theory Ser. B},
			FJOURNAL = {Journal of Combinatorial Theory. Series B},
			VOLUME = {118},
			YEAR = {2016},
			PAGES = {13--43},
			ISSN = {0095-8956},
			MRCLASS = {05C70 (05C35)},
			MRNUMBER = {3471843},
			MRREVIEWER = {Fang Tian},
			DOI = {10.1016/j.jctb.2016.01.007},
			URL = {https://doi.org/10.1016/j.jctb.2016.01.007},
		}
	
	\end{biblist}
\end{bibdiv}

\end{document}